\pdfoutput=1

\documentclass{article}


\newcommand{\notetoself}[1]{}

\newcommand{\samcomment}[1]{}
\renewcommand{\samcomment}[1]{{ \color{red}[#1]}} 


\newcommand{\dOne}{0}
\newcommand{\dTwo}{0}
\newcommand{\dThree}{0}
\newcommand{\s}{0}
\newcommand{\angleOne}{0}
\newcommand{\angleTwo}{0}
\newcommand{\angleThree}{0}

\usepackage{amssymb}
\usepackage{amsmath}

\usepackage{tabularx}

\usepackage{mathdots}

\usepackage[overload]{textcase}

\usepackage{lipsum}

\hbadness=10001

\bibliographystyle{plain}

\usepackage{scrextend} 
\usepackage{float} 

\usepackage{tikz-cd}
\usetikzlibrary{arrows}
\usetikzlibrary{knots}
\usetikzlibrary{calc}
\usetikzlibrary{intersections}
\usetikzlibrary{tqft}
\usepackage{tikz-3dplot}

\tikzcdset{row sep/normal=1cm}

\setlength\arraycolsep{8pt}

\usepackage{enumitem}
\setlist[enumerate]{leftmargin={1cm}, font={\bfseries}, label=\alph*., itemsep=.25cm, topsep=.25cm}

\usepackage[hidelinks, breaklinks]{hyperref}

\usepackage{amsthm}
\usepackage{thm-restate}


    \theoremstyle{definition}
\newtheorem{example}{Example}

\setcounter{tocdepth}{3}

\usepackage{letltxmacro}
\makeatletter
\let\oldr@@t\r@@t
\def\r@@t#1#2{%
\setbox0=\hbox{$\oldr@@t#1{#2\,}$}\dimen0=\ht0
\advance\dimen0-0.2\ht0
\setbox2=\hbox{\vrule height\ht0 depth -\dimen0}%
{\box0\lower0.4pt\box2}}
\LetLtxMacro{\oldsqrt}{\sqrt}
\renewcommand*{\sqrt}[2][\ ]{\oldsqrt[#1]{#2}}
\makeatother

\let\origfigure\figure
\let\endorigfigure\endfigure
\renewenvironment{figure}[1][tbph]{%
    \origfigure[#1]%
    \centering
}{%
    \endorigfigure
}

\usepackage{subfig}

\usepackage[mathscr]{euscript} 

\newcommand{\C}{\mathbb{C}}
\newcommand{\R}{\mathbb{R}}

\newcommand{\bs}{\backslash}
\newcommand{\F}{\mathbb{F}} 
\renewcommand{\O}{\mathcal{O}} 
\newcommand{\im}{\text{Im }} 

\newcommand{\spn}[1]{\langle #1 \rangle}


\newcommand{\gl}[1]{\glWork#1\end}
    \def\glWork#1,#2\end{\text{GL}_{#1}(#2)}
\newcommand{\pgl}[1]{\pglWork#1\end}
    \def\pglWork#1,#2\end{\text{PGL}_{#1}(#2)}
\newcommand{\psl}[1]{\pslWork#1\end}
    \def\pslWork#1,#2\end{\text{PSL}_{#1}(#2)}

    \def\pspWork#1,#2\end{\text{PSp}_{#1}(#2)}
    
\let \tempHom\hom
\renewcommand{\hom}[1]{\tempHom \big(#1\big)}

\let\tempPhi\phi 
\let\phi\varphi
\let\varphi\tempPhi 
\let\ForAll\forall 
    \renewcommand{\forall}{\ForAll \;} 
\let\Exists\exists
    \renewcommand{\exists}{\Exists \;} 
\let\LIM\lim
    \renewcommand{\lim}{\LIM\limits} 

\newcommand{\set}[1]{\left\{ #1 \right\}}
\newcommand*\conj[1]{\mkern 1.5mu\overline{\mkern-1.5mu#1\mkern-1.5mu}\mkern 1.5mu}
\newcommand{\abs}[1]{\left|#1\right|}


\usepackage{pgfplots}
\usepgfplotslibrary{polar}
\pgfplotsset{compat=newest}




    \def\embedgraphwork#1,#2,#3\end{
\if\ifnum#3 > 3\fi
    \renewcommand{\dOne}{.5}
    \renewcommand{\dTwo}{.5}
    \renewcommand{\dThree}{.7}
    \renewcommand{\angleOne}{180/#1}
    \renewcommand{\angleTwo}{180/#2}
    \renewcommand{\angleThree}{180/#3}
\tikzset{rotate=-90}
\foreach \counter in{1, ..., #3}{
    \renewcommand{\s}{\counter*2*\angleThree}
    \coordinate (vertex1) at (0:0);
    \coordinate (vertex2) at (\s:1);
        \path[name path=A] (vertex2) -- ++ (\s + 180  - \angleOne:1.42); 
        \path[name path=B] (vertex1) --  (\s+\angleThree:1.42); 
        \path [name intersections={of=A and B,name=intersection}];
    \coordinate (vertex3) at (intersection-1);
    \coordinate (embedded1) at    ($.4*(vertex1)+.3*(vertex2)+.3*(vertex3)$);
    \coordinate (embedded2) at    ($.3*(vertex1)+.4*(vertex2)+.3*(vertex3)$);
    \coordinate (embedded3) at    ($.3*(vertex1)+.3*(vertex2)+.4*(vertex3)$);
    \draw (vertex1) to (vertex2);
    \draw (vertex1) to (vertex3);
    \draw (vertex2) to (vertex3) to (\s+2*\angleThree:1);
}
\tikzset{shift={(0:1)}}
\foreach \counter in{1, ..., #1}{
    \renewcommand{\s}{\counter*2*\angleOne}
        \path[name path=A] (\s:1) -- ++ (\s + 180  - \angleThree:1.42); 
        \path[name path=B] (0:0) --  (\s+\angleOne:1.42); 
        \path [name intersections={of=A and B,name=intersection}];
    \draw (0:0) to (\s:1);
    \draw (0:0) to (intersection-1);
    \draw (\s:1) to (intersection-1) to (\s+2*\angleOne:1);
}
\foreach \counter in{2, ..., #1}{
}
\tikzset{shift={(180:1)}}
\tikzset{shift={(intersection-1)}}
\tikzset{rotate=-\angleThree}
coordinate (temp) at (intersection-1)
\newdimen\Xtemp
\newdimen\Ytemp
\pgfgetlastxy{\Xtemp}{\Ytemp};
\foreach \counter in{1, ..., #2}{
    \renewcommand{\s}{\counter*2*\angleTwo}
        \path[name path=A] (\s:{veclen(\Xtemp, \Ytemp)}) -- ++ (\s + 180  - \angleThree:1.42); 
        \path[name path=B] (0:0) --  (\s+\angleTwo:1.42); 
        \path [name intersections={of=A and B,name=intersection}];
    \draw (0:0) to (\s:{veclen(\Xtemp, \Ytemp)});
    \draw (0:0) to (intersection-1);
    \draw (\s:{veclen(\Xtemp, \Ytemp)}) to (intersection-1) to (\s+2*\angleTwo:{veclen(\Xtemp, \Ytemp)});
}

\fi
}

\newcommand\pgfmathsinandcos[3]{%
  \pgfmathsetmacro#1{sin(#3)}%
  \pgfmathsetmacro#2{cos(#3)}%
}
\newcommand\LongitudePlane[3][current plane]{%
  \pgfmathsinandcos\sinEl\cosEl{#2} 
  \pgfmathsinandcos\sint\cost{#3} 
  \tikzset{#1/.estyle={cm={\cost,\sint*\sinEl,0,\cosEl,(0,0)}}}
}
\newcommand\LatitudePlane[3][current plane]{%
  \pgfmathsinandcos\sinEl\cosEl{#2} 
  \pgfmathsinandcos\sint\cost{#3} 
  \pgfmathsetmacro\yshift{\cosEl*\sint}
  \tikzset{#1/.estyle={cm={\cost,0,0,\cost*\sinEl,(0,\yshift)}}} %
}
\newcommand\DrawLongitudeCircle[2][4]{
  \LongitudePlane{\angEl}{#2}
  \tikzset{current plane/.prefix style={scale=#1,dashed}}
  \pgfmathsetmacro\angVis{atan(sin(#2)*cos(\angEl)/sin(\angEl))} %
  \draw[current plane] (\angVis:1) arc (\angVis:\angVis+180:1);
}
\newcommand\DrawLatitudeCircle[2][5]{
  \LatitudePlane{\angEl}{#2}
  \tikzset{current plane/.prefix style={scale=#1,line width=.7pt}}
  \pgfmathsetmacro\sinVis{sin(#2)/cos(#2)*sin(\angEl)/cos(\angEl)}
  \pgfmathsetmacro\angVis{asin(min(1,max(\sinVis,-1)))}
  \draw[current plane] (\angVis:1) arc (\angVis:-\angVis-180:1);
}
\def\Rad{1} 
\def\angEl{35} 


\tikzset{viewport/.style 2 args={
    x={({cos(-#1)*1cm},{sin(-#1)*sin(#2)*1cm})},
    y={({-sin(-#1)*1cm},{cos(-#1)*sin(#2)*1cm})},
    z={(0,{cos(#2)*1cm})}
}}


\pgfplotsset{only foreground/.style={
    restrict expr to domain={rawx*\CameraX + rawy*\CameraY + rawz*\CameraZ}{-0.05:100},
}}
\pgfplotsset{only background/.style={
    restrict expr to domain={rawx*\CameraX + rawy*\CameraY + rawz*\CameraZ}{-100:0.05}
}}

\def\addFGBGplot[#1]#2;{
    \addplot3[#1,only background, opacity=0.25] #2;
    \addplot3[#1,only foreground] #2;
}


\usepackage{xcomment} 

\title{Conjugating Representations in \protect\\
    $\pgl{k, \C}$ into $\pgl{k, \R}$}
\date{October 2023}
\author{Jared Tristan Miller}

\begin{document}

\maketitle              

\tableofcontents

\section*{Abstract}

The space of representations of a surface group into a given simple Lie group is a very active area of research and is particularly relevant to higher Teichm\"uller theory. For a closed surface, classical Teichm\"uller space is a connected component of the moduli space of representations into $\psl{2, \R}$ and \cite{Fock:2006} showed that the space of positive representations into $\psl{k, \R}$ coincides with the Hitchin component. In this paper we study representations of finitely generated groups into $\pgl{k, \C}$ and determine necessary and sufficient conditions for such a representation to be conjugate into $\pgl{k, \R}$. In this way, we identify representations in the larger representation variety which are conjugate in $\pgl{k, \C}$ to a representation in $\hom{\pi_1 (\Sigma), \pgl{k, \R}} / \pgl{k, \R}$.

\section{Introduction}

The space of representations of the fundamental group of a surface $\Sigma$ into $\pgl{k, \R}$ is of considerable interest. In the case where $\Sigma$ is oriented and closed, \cite{Labourie:2006} showed that the space of discrete, faithful representations $\pi_1(\Sigma) \rightarrow \psl{2, \R}$ up to conjugation by $\psl{2, \R}$ corresponds to classical Teichm\"uller space. 

Teichm\"uller space also generalizes to higher rank Teichm\"uller theory (for reference, see \cite{Wienhard:2018}) by considering representations into $\psl{k, \R}$ or an arbitrary simple Lie group. When the Lie group is split, real, and simple, these spaces of representations give rise to a Hitchin component, defined as the connected component of the moduli space of representations containing a principal Fuchsian representation. \cite{Fock:2006, Labourie:2006} showed independently that Hitchin components are higher Teichm\"uller spaces. It is therefore a natural question to ask when a representation into a larger group of linear transformations, such as into $\pgl{k, \C}$, is conjugate into $\pgl{k, \R}$. In this paper we describe necessary and sufficient conditions for certain representations in the larger representation variety $\hom{\pi_1(\Sigma), \pgl{k, \C}} / \pgl{k, \C}$ to be conjugate to representations into $\pgl{k, \R}$. 

If $\Sigma$ is some finite type surface and $\O \subseteq \hom{\pi_1(\Sigma), \pgl{k, \C}}$ is the open set of representations whose projective transformations have \emph{$\pgl{k, \R}$ compatible generic eigenvalues} (defined in section \ref{pglRcompatible}), then there exists functions $f_1, \cdots, f_n$ of the eigenvalues and eigendirections of the projective transformations such that some $\alpha \in \O$ is conjugate into $\pgl{k, \R}$ if and only if  $\alpha$ is in the vanishing set of $\set{f_1, \cdots, f_n}$. In the two dimensional version, given a representation $\alpha \in \hom{\pi_1(\Sigma), \pgl{2, \C}}$ with generators $M_1, \cdots, M_n$ with \emph{$\pgl{2, \R}$ compatible generic eigenvalues}, if the projective transformations $M_1$ and $M_2$ have \emph{hyperbolic eigenvalues} (defined in section \ref{hyperbolicEigenval}) then these functions can be reduced to the following functions of cross ratios (defined in section \ref{crossRatio}). 

\begin{itemize}
    \item $f_1 = [m_1^-, m_2^-, m_1^+, m_2^+] - \conj{ [m_1^-, m_2^-, m_1^+, m_2^+]}$ \\
\end{itemize}
and for each $j > 2$, if $M_j$ has \emph{hyperbolic eigenvalues} then
\begin{itemize}
    \item $f_{2j-4} = [m_1^-, m_j^-, m_1^+, m_2^+] - \conj{[m_1^-, m_j^-, m_1^+, m_2^+]}$ \\
    \item $f_{2j-3} = [m_1^-, m_j^+, m_1^+, m_2^+] - \conj{[m_1^-, m_j^+, m_1^+, m_2^+]}$ \\
\end{itemize}
and if $M_j$ has \emph{elliptic eigenvalues} (defined in section \ref{ellipticEigenval}) then
\begin{itemize}
    \item $f_{2j-4} = \left([m_1^-, m_j^-, m_1^+, m_2^+] - \conj{[m_1^-, m_j^-, m_1^+, m_2^+]} \right)$ \\
    \hspace*{3cm} $- \left( \conj{[m_1^-, m_j^+, m_1^+, m_2^+]} - [m_1^-, m_j^+, m_1^+, m_2^+ \right)$ \\
    \item $f_{2j-3} = \left([m_1^-, m_j^-, m_1^+, m_2^+] + \conj{[m_1^-, m_j^-, m_1^+, m_2^+]} \right)$ \\
    \hspace*{3cm} $- \left( \conj{[m_1^-, m_j^+, m_1^+, m_2^+]} + [m_1^-, m_j^+, m_1^+, m_2^+ \right)$ \\
\end{itemize}
where $m_j^-$ and $m_j^+$ denote the eigendirections of the generator $M_j$, taken as points in $\C P^1$. Throughout this paper we give similar conditions for more generic sets of generators and in arbitrary dimension. The number of real equations needed to determine if a representation of a surface $\Sigma$ is conjugate into $\pgl{2, \R}$ is $-3 \chi(\Sigma)$ where $\chi$ denotes the Euler characteristic, and this coincides with the complex dimension of the character variety of representations into $\pgl{2, \C}$. For representations into $\pgl{k, \C}$, the number of real equations needed to do the same is $(k^2 - 1) \chi(\Sigma)$, which also coincides with the complex dimension of the character variety in arbitrary dimension. 


Projective linear transformations act on projective space and are induced by linear transformations on the vector space from which the projective space derives. Consider $\C P^{k-1} = \C^k / \sim$ with the standard quotient by complex scaling. The projective linear group $\pgl{k, \C}$ acts on $\C P^{k-1}$ by considering the action of a linear transformation on $\C^k$ and then taking the quotient; these projective transformations may or may not correspond to elements of $\pgl{k, \C}$ which are conjugate into $\pgl{k, \R}$. Since real transformations preserve $\R^k \subseteq \C^k$, a projective transformation may only be conjugate into $\pgl{k, \R}$ if it preserves some \emph{projective $\R$-form} (defined in section \ref{projRform}) which can be mapped to the quotient of $\R^k \subseteq \C^k$ by a change of basis or projective form. For example, consider the projective transformation 
    $$\begin{bmatrix} 2i & 0 \\ 0 & 1 \end{bmatrix}$$
which acts on the extended complex plane by rotating by $\pi/2$ and scaling by $2$. Since this transformation does not preserve any circles or lines in the extended complex plane, there is no change of basis that maps the extended real line, which is preserved by every real projective transformation, to a circle or line which is preserved by this transformation. Therefore, this projective transformation is not individually conjugate into $\pgl{2, \R}$. For more details, see lemma \ref{eigenvalLemma}.  

It is well known that matrix conjugation preserves eigenvalues and that complex eigenvalues of matrices over $\R$ occur in complex conjugate pairs. Further, a matrix with distrinct eigenvalues is diagonalizable over $\C$ and diagonalizable matrices are conjugate to diagonal matrices that have their eigenvalues along the diagonal. Collectively, this means that an element $M \in \pgl{k, \C}$ with distinct eigenvalues is individually conjugate into $\pgl{k, \R}$ if and only if there exists a real line $R$ through the origin in $\C$ such that all of the eigenvalues of $M$ are either on $R$ or are pairwise inverted by reflection about $R$ in $\C$. Throughout this paper, we assume that each projective transformation $M$ has distinct eigenvalues. This condition is not very restrictive, as we can approximate repeated eigenvalues by taking limits of projective transformations with distinct eigenvalues. The author is planning to address this restriction in future work. 

A representation into $\pgl{k, \C}$ being conjugate into $\pgl{k, \R}$ requires every generator of the fundamental group of the surface to map to an element in $\pgl{k, \C}$ which preserves a common \emph{projective $\R$-form} which may be transformed into the quotient of $\R^k \subseteq \C^k$. In this way, we are concerned about whether a finitely generated collection of transformations is simultaneously conjugate into $\pgl{k, \R}$, meaning there exists some projective transformation in $\pgl{k, \C}$ which conjugates every element in the collection into $\pgl{k, \R}$. We study this in three ways: first using the complex conjugation coming from the complexification of a real projective frame, second using Fock-Goncharov coordinates, namely cross ratios and triples ratios, coming from flags of eigendirections as projective linear transformation invariants, and lastly using only cross ratios coming from flags of eigendirections. We describe both the space of representations which are conjugate into $\pgl{k, \R}$ and how to determine if an individual representation is conjugate into $\pgl{k, \R}$.

\section{Using \texorpdfstring{$\R$}{R}-forms and Conjugation}

Let $\F$ be a field and $\F^k$ be the $k$-dimensional vector space over $\F$. The projective space $\F P^{k-1}$ is defined to be the quotient space of $F^k \bs \set{0}$ by $F^\times$, that is, the space of nonzero vectors quotient by scaling. Let $\gl{k, \F}$ be the group of invertible $k \times k$ matrices over $\F$ with the operation of ordinary matrix multiplication. Then $\pgl{k, \F}$ is defined as the quotient of $\gl{k, \F}$ by its center, which consists of all nonzero scalar transformations of $\F^k$. An element of $\pgl{k, \R}$, called a \emph{projective transformation}, acts naturally on $\R P^{k-1}$ by left multiplication and this action can be extended to acting on $\C P^{k-1}$. Let $\R P^{k-1} \subseteq \C P^{k-1}$ denote the quotient of $\R^k \subseteq \C^k$ by complex scaling. As products of points which can be represented with real vectors by transformations which can be represented with real matrices, the action of $\pgl{k, \R}$ on $\C P^{k-1}$ maps $\R P^{k-1} \subseteq \C P^{k-1}$ to itself, thereby preserving (not necessarily point-wise) the $\R P^{k-1} \subseteq \C P^{k-1}$.

Given the vector space $\C^k$, an \emph{$\R$-form} in $\C^k$ is the real span of any $\C$-basis for $\C^k$. The space of $\R$ forms can be identified with the homogeneous space $\gl{k, \C} / \gl{k, \R}$, because $\gl{k, \C}$ acts transitively on $\R$ forms and the stabilizer in $\gl{k, \C}$ of the $\R$ form coming from $\R^k \subseteq \C^k$ is $\gl{k, \R}$. A \emph{projective $\R$-form}\label{projRform} is the space of nonzero vectors in an $\R$-form quotient by scaling by $\C$. This can be interpreted as a copy of $\R P^{k-1} \subseteq \C P^{k-1}$. In particular, in dimension 2 we have that a projective $\R$ form is a circle or line in $\hat{\C}$, such as in the figures \ref{hyperbolicFigure} and \ref{ellipticFigure}. Since a projective $\R$-form is formed by the quotient of scaling by $\C$, the fiber of each projective $\R$-form is $S^1$, corresponding to multiplying each basis vector for an $\R$-form by a common scalar $e^{i \theta}$ for $\theta \in \R$. 

As points in $\C P^{k-1}$ are unchanged when multiplying by a scalar in $\C$, there would ambiguity of which representative of a given point in $\C P^{k-1}$ to consider. In particular, the $\R$-span in $\C^k$ of $k$ points in $\C P^{k-1}$ depends on the choice of representatives of each point. For that reason, we define a \emph{projective frame} for $\F P^{k-1}$ as a collection of $k + 1$ points such that no hyperplane in $\F^k$ contains $k$ of them. Denoting the projection of a point $v \in \F^k$ into $\F P^{k-1}$ by $p(v)$, a projective frame can be written as $\set{p(e_0), \cdots, p(e_k), p(e_0 + \cdots e_k)}$ where $\set{e_0, \cdots, e_k}$ is some basis of $\F^k$. In this case, we say that $\set{e_0, \cdots, e_k}$ is a basis associated to the given projective frame. A projective frame for $\C P^{k-1}$ does not lift to a unique basis of $\C^k$, but the $\R$-span in $\C^k$ of any basis associated to a particular projective frame projects to the same projective $\R$-form. 

We say that a projective transformation $M \in \pgl{k, \C}$ is \emph{conjugate into $\pgl{k, \R}$} if there exists some element $\Gamma \in \pgl{k, \C}$ such that $\Gamma^{-1} \; M \; \Gamma \in \pgl{k, \R}$. This is equivalent to there being a change of basis such that in the new basis $M \in \pgl{k, \R}$. We say that a collection of projective transformations $\set{M_j}$ is \emph{simultaneously conjugate into $\pgl{k, \R}$} if there exists a single element $\Gamma \in \pgl{k, \C}$ such that $\Gamma^{-1} \; M_j \; \Gamma \in \pgl{k, \R}$ for all $M_j$ in the collection. We start to study conjugation into $\pgl{k, \R}$ with a few lemmas. 

\begin{restatable}{lemma}{projectStabilizer}
The projection of the stabilizer of an $\R$ form is equal to the stabilizer of the projection of that $\R$ form. In other words, projections and stabilizers of $\R$ forms commute. 
\end{restatable}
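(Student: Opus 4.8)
The plan is to establish the two inclusions separately, with essentially all of the work living in the reverse one. I would fix an $\R$-form $V \subseteq \C^k$, taken as the real span of a $\C$-basis $v_1, \dots, v_k$, with associated projective $\R$-form $PV = \set{p(v) : v \in V \setminus \set 0}$, and let $\pi \colon \gl{k,\C} \to \pgl{k,\C}$ be the natural projection. The easy inclusion: since $p$ intertwines the linear action of $\gl{k,\C}$ on $\C^k \setminus \set 0$ with the projective action of $\pgl{k,\C}$ on $\C P^{k-1}$, any $B \in \gl{k,\C}$ with $B(V) = V$ satisfies $\pi(B)(PV) = p\big(B(V \setminus \set 0)\big) = PV$, so $\pi$ sends the stabilizer in $\gl{k,\C}$ of $V$ into the stabilizer in $\pgl{k,\C}$ of $PV$.

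For the reverse inclusion I would start with $A \in \gl{k,\C}$ such that $\pi(A)$ stabilizes $PV$, and aim to produce a scalar $\lambda \in \C^\times$ for which $\lambda A$ \emph{literally} stabilizes $V$; then $\pi(A) = \pi(\lambda A)$ lies in $\pi$ of the stabilizer of $V$, which is what is wanted. The device is the projective frame $\set{p(v_1), \dots, p(v_k),\, p(v_1 + \cdots + v_k)}$, all of whose points lie in $PV$. Since $\pi(A)$ carries points of $PV$ into $PV$, I may write $A v_i = \lambda_i u_i$ and $A(v_1 + \cdots + v_k) = \lambda_0 u_0$ with $\lambda_0, \dots, \lambda_k \in \C^\times$ and $u_0, \dots, u_k \in V \setminus \set 0$. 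Invertibility of $A$ makes $u_1, \dots, u_k$ a $\C$-basis of $\C^k$ contained in $V$, hence an $\R$-basis of $V$, so $u_0 = \sum_i c_i u_i$ with each $c_i \in \R$; and since $\pi(A)$ carries projective frames to projective frames, $\set{p(u_1), \dots, p(u_k), p(u_0)}$ is a projective frame, which forces every $c_i \neq 0$ (if $c_j = 0$ then the $k$ points $p(u_0)$ and $p(u_i)$ for $i \neq j$ all lie on the hyperplane spanned by $\set{u_i : i \neq j}$, contradicting the frame condition).

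The calculation that finishes it is short: comparing coordinates in the basis $u_1, \dots, u_k$, the identity $\lambda_0 u_0 = A\big(\sum_i v_i\big) = \sum_i \lambda_i u_i$ gives $\lambda_i = c_i \lambda_0$ for every $i$, so with $\lambda = \lambda_0$ the matrix $B = \lambda^{-1} A$ satisfies $B v_i = c_i u_i \in V$ for all $i$; by $\R$-linearity $B(V) \subseteq V$, and since $c_1 u_1, \dots, c_k u_k$ is again an $\R$-basis of $V$ we actually get $B(V) = V$. Hence $B$ stabilizes $V$ and $\pi(A) = \pi(B)$, completing the inclusion.

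I expect the one genuinely load-bearing observation to be the role of the $(k+1)$-st frame point $p(v_1 + \cdots + v_k)$: without it the scaling factors $\lambda_i$ need not be real multiples of one another — for instance $\mathrm{diag}(1, i, 1, \dots, 1)$ fixes every projectivized coordinate point of $\R P^{k-1}$ yet is not a scalar multiple of a real matrix — and it is precisely the relation $u_0 = \sum_i c_i u_i$ with all $c_i$ real and nonzero that pins the $\lambda_i$ down to a common scalar. Everything else (the easy inclusion, the invertibility and basis bookkeeping, the projective-frame/hyperplane argument) is routine.
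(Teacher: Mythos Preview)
Your proof is correct and considerably more thorough than the paper's. The paper's argument is essentially a reduction-and-assertion: it normalizes so that the $\R$-form is $\R^k \subseteq \C^k$, observes that the stabilizer of $\R^k$ in $\gl{k,\C}$ is $\gl{k,\R}$, and then simply \emph{asserts} that the stabilizer of $\R P^{k-1}$ in $\pgl{k,\C}$ is $\pgl{k,\R}$, closing the square. You, by contrast, actually prove the nontrivial inclusion---that anything stabilizing the projective $\R$-form lifts, after a scalar, to something stabilizing the $\R$-form itself---by a projective frame argument that pins down the scalar ambiguities $\lambda_1,\dots,\lambda_k$ to a single common factor via the $(k+1)$-st frame point. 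The paper does use exactly this idea later (in the $\pgl{2,\C}$ discussion, where it notes that only $\pgl{2,\R}$ preserves $\hat\R$ ``by considering the action on $0,1,\infty$''), but it does not invoke it here. Your route buys a self-contained proof; the paper's buys brevity at the cost of treating the key step as known. Your closing remark about the diagonal counterexample $\mathrm{diag}(1,i,1,\dots,1)$ is apt and makes clear why the extra frame point is genuinely needed.
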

\begin{proof}
The stabilizer of $\R^k$ is $\gl{k, \R}$. By definition, an $\R$ form is equivalent to $\R^k$ in some basis, so its stabilizer is isomorphic to $\gl{k, \R}$. The projection of $\gl{k, \R}$ is $\pgl{k, \R}$. On the other hand, the projection of an $\R$ form $\R^k \subseteq \C^k$ is $\R P^{k-1}$ and the stabilizer of $\R P^{k-1}$ is $\pgl{k, \R}$. 

\begin{tikzcd}[column sep = huge]
\R \text{ form in } \C^k \arrow[r, "stabilizer"] \arrow[d, "projection"] & \gl{k, \R} \arrow[d, "projection"] \\
\text{Projective $\R$ form in } \C P^{k-1} \arrow[r, "stabilizer"] & \pgl{k, \R}
\end{tikzcd}

\end{proof}

\begin{restatable}{lemma}{Rform}
An element $M \in \pgl{k, \C}$ is conjugate into $\pgl{k, \R}$ if and only if $M$ preserves some projective $\R$ form. 
\end{restatable}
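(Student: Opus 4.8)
The plan is to prove both directions directly from the definitions, leaning on the previous lemma (projections and stabilizers of $\R$-forms commute) to transfer between the linear and projective pictures.

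First I would prove the forward direction. Suppose $M \in \pgl{k,\C}$ is conjugate into $\pgl{k,\R}$, so there exists $\Gamma \in \pgl{k,\C}$ with $\Gamma^{-1} M \Gamma \in \pgl{k,\R}$. Since $\pgl{k,\R}$ is the stabilizer of the standard projective $\R$-form $\R P^{k-1} \subseteq \C P^{k-1}$, the transformation $\Gamma^{-1} M \Gamma$ preserves $\R P^{k-1}$. Applying $\Gamma$ to this, $M$ preserves $\Gamma \cdot \R P^{k-1}$, which is again a projective $\R$-form (the image of a projective $\R$-form under an element of $\pgl{k,\C}$ is a projective $\R$-form, since $\gl{k,\C}$ acts transitively on $\R$-forms and this action descends to projective space). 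Hence $M$ preserves some projective $\R$-form.

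Conversely, suppose $M$ preserves a projective $\R$-form $P$. By transitivity of the $\gl{k,\C}$-action on $\R$-forms, and the fact that this descends to a transitive action on projective $\R$-forms, there is $\Gamma \in \pgl{k,\C}$ taking the standard projective $\R$-form $\R P^{k-1}$ to $P$, i.e. $\Gamma \cdot \R P^{k-1} = P$. Then $\Gamma^{-1} M \Gamma$ preserves $\R P^{k-1}$, so it lies in the stabilizer of $\R P^{k-1}$, which by the previous lemma is exactly $\pgl{k,\R}$. Therefore $\Gamma^{-1} M \Gamma \in \pgl{k,\R}$ and $M$ is conjugate into $\pgl{k,\R}$.

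The only point requiring a little care — the main obstacle, such as it is — is justifying that $\pgl{k,\C}$ acts transitively on projective $\R$-forms and that $\Gamma^{-1}(\cdot)\Gamma$ genuinely realizes the geometric "move $P$ to $\R P^{k-1}$" operation at the projective level rather than only at the level of $\R$-forms in $\C^k$. This follows by picking a basis associated to a projective frame whose $\R$-span projects to $P$, sending it to the standard basis via some $g \in \gl{k,\C}$, and observing that the projectivization $\Gamma = p(g)$ is well-defined independent of the choice of associated basis (different choices differ by a real scalar on all vectors, hence by a central element of $\gl{k,\C}$), using the fact established earlier that any basis associated to a given projective frame projects to the same projective $\R$-form. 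Everything else is a direct unwinding of definitions together with the commuting square from the preceding lemma.
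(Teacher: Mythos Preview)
Your proof is correct and follows essentially the same approach as the paper: both directions rest on the transitivity of the $\pgl{k,\C}$ action on projective $\R$-forms together with the identification of $\pgl{k,\R}$ as the stabilizer of the standard projective $\R$-form. Your final paragraph makes the transitivity and well-definedness at the projective level more explicit than the paper does, but the argument is otherwise the same.
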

\begin{proof}
If an element is conjugate into $\pgl{k, \R}$, then since conjugation amounts to a change of basis the element is in $\pgl{k, \R}$ with respect to some basis. In that basis the element preserves the projective $\R$ form coming from $\R^k \subseteq \C^k$. Matrix conjugation is necessarily invertible and therefore there exists some preserved projective $\R$ form which can be mapped to the projective $\R$ form coming from $\R^k \subseteq \C^k$ by that change of basis. 

On the other hand, suppose the element preserves some projective $\R$ form. All projective $\R$ forms in a given $\C P^{k-1}$ are equivalent up to change of basis, so without loss of generality assume that the projective $\R$ form which is preserved is the projection of $\R^k \subseteq \C^k$. Then in particular the projective transformation is in the stabilizer of the projection of $\R^k \subseteq \C^k$, so is in $\pgl{k, \R}$. Then the preserved projective frame, with respect to its original basis, has stabilizer which is conjugate to $\pgl{k, \R}$. Therefore, the element is conjugate into $\pgl{k, \R}$. 

\end{proof}

\begin{restatable}{corollary}{conjIFFrForm}\label{conjIFFrForm}
A finitely generated group of elements in $\pgl{k, \C}$ is simultaneously conjugate into $\pgl{k, \R}$ if and only if every generator in the collection preserves some common projective $\R$-form. 
\end{restatable}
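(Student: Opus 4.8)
The plan is to obtain this directly from the preceding lemma, upgrading its single-element statement to a simultaneous one by tracking one common change of basis. The only content beyond that lemma is the observation that the conjugating element can be chosen uniformly across all generators precisely when the preserved projective $\R$-forms can be chosen to coincide, so most of the work is bookkeeping.

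For the forward implication I would start from the definition of simultaneous conjugacy: there is a single $\Gamma \in \pgl{k, \C}$ with $\Gamma^{-1} \, M_j \, \Gamma \in \pgl{k, \R}$ for every generator $M_j$. By the argument of the preceding lemma, each $\Gamma^{-1} \, M_j \, \Gamma$ preserves (not necessarily pointwise) the projective $\R$-form $V_0$ that is the image of $\R^k \subseteq \C^k$. Pushing forward by $\Gamma$, every generator $M_j$ preserves the single projective $\R$-form $\Gamma \cdot V_0$, and this is the common projective $\R$-form required.

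For the reverse implication, suppose every generator preserves one common projective $\R$-form $V$. Since all projective $\R$-forms in a fixed $\C P^{k-1}$ are equivalent up to change of basis, I would choose $\Gamma \in \pgl{k, \C}$ carrying $V$ to $V_0$; then for each $j$ the element $\Gamma^{-1} \, M_j \, \Gamma$ lies in the stabilizer of $V_0$, which is $\pgl{k, \R}$, so the same $\Gamma$ simultaneously conjugates all generators into $\pgl{k, \R}$. Finally I would note that this suffices to treat the whole group: $\pgl{k, \R}$ is closed under products and inverses, so once $\Gamma$ conjugates every generator into $\pgl{k, \R}$ it conjugates the entire generated group into $\pgl{k, \R}$ as well.

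I do not expect a genuine obstacle here, since the statement is a corollary of the preceding lemma; the only point worth stating with care is exactly this last one, namely that restricting attention to a generating set loses nothing because conjugation is a group homomorphism and $\gl{k, \R}$ is a subgroup of $\gl{k, \C}$.
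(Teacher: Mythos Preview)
Your argument is correct and is exactly the intended elaboration: the paper states this corollary without proof, treating it as immediate from the preceding lemma, and your bookkeeping (one $\Gamma$ works for all generators, and hence for the whole group) is precisely what that omission is asking the reader to supply. One cosmetic slip: in the reverse implication, if $\Gamma$ carries $V$ to $V_0$ then it is $\Gamma\, M_j\, \Gamma^{-1}$ that stabilizes $V_0$, not $\Gamma^{-1} M_j \Gamma$; either swap the direction of $\Gamma$ or the order of conjugation.
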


Further, it is well known that the eigenvalues of a real matrix are either real or occur in complex conjugate pairs.

\begin{restatable}{lemma}{eigenvals} \label{eigenvalLemma}
A projective transformation $M$ in $\pgl{k, \C}$ may be conjugate into $\pgl{k, \R}$ only if for any representative there exists a real line $\ell$ through the origin in $\C$ such that all eigenvalues are either contained in $\ell$ or pairwise swapped by reflection about $\ell$. 
\end{restatable}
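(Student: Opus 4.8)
The plan is to reduce to the classical fact quoted just before the statement—that a real matrix has characteristic polynomial with real coefficients, hence its eigenvalues are either real or occur in complex conjugate pairs—and then to keep careful track of the single complex scalar that is introduced when one passes from a matrix in $\gl{k,\C}$ to a projective transformation in $\pgl{k,\C}$. Indeed, the content of this lemma \emph{beyond} the elementary matrix fact is precisely the bookkeeping of that scalar; everything else is routine.

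First I would unwind the hypothesis. Suppose $M \in \pgl{k,\C}$ is conjugate into $\pgl{k,\R}$, so by definition there is $\Gamma \in \pgl{k,\C}$ with $\Gamma^{-1} M \Gamma \in \pgl{k,\R}$. Fix any representative $\tilde M \in \gl{k,\C}$ of $M$, a lift $\tilde\Gamma \in \gl{k,\C}$ of $\Gamma$, and a real representative $A \in \gl{k,\R}$ of $\Gamma^{-1}M\Gamma$. Then $\tilde\Gamma^{-1}\tilde M \tilde\Gamma$ and $A$ are two lifts of the same element of $\pgl{k,\C}$, so $\tilde\Gamma^{-1}\tilde M\tilde\Gamma = cA$ for some $c \in \C^\times$. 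Since conjugation preserves the characteristic polynomial, and multiplying a $k\times k$ matrix by a scalar $c$ multiplies every eigenvalue by $c$, the eigenvalues of $\tilde M$ are exactly the numbers $c\mu$ as $\mu$ ranges (with multiplicity) over the eigenvalues of the real matrix $A$.

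Next I would produce the line. Write $c = r e^{i\theta}$ with $r>0$, let $\ell$ be the real line through the origin in $\C$ spanned by $e^{i\theta}$, and recall that reflection about $\ell$ is the map $\rho_\ell(z) = e^{2i\theta}\bar z$. Because $A$ is real, each eigenvalue $\mu$ of $A$ is either real, in which case $c\mu = r\mu\,e^{i\theta}\in\ell$; or it occurs in a conjugate pair $\{\mu,\bar\mu\}$ of equal multiplicities, in which case $c\bar\mu = r e^{i\theta}\bar\mu = e^{2i\theta}\,\overline{r e^{i\theta}\mu} = \rho_\ell(c\mu)$, so the corresponding two eigenvalues of $\tilde M$ are swapped by $\rho_\ell$. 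Grouping the eigenvalues of $\tilde M$ according to this dichotomy gives exactly the asserted statement for the representative $\tilde M$.

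Finally I would handle the phrase ``for any representative.'' Any other representative of $M$ has the form $c'\tilde M$ for some $c'=\rho e^{i\beta}\in\C^\times$, and a short computation shows that multiplication by $c'$ carries lines through the origin to lines through the origin and intertwines $\rho_\ell$ with $\rho_{\ell'}$, where $\ell'$ is the line through the origin at angle $\theta+\beta$; hence the conclusion of the previous paragraph transfers verbatim with $\ell$ replaced by $\ell'$. I do not anticipate a genuine obstacle: the only points requiring care are not dropping the scalar $c$ when lifting to matrices, and counting eigenvalues with multiplicity so that conjugate pairs can in fact be matched up.
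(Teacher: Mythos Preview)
Your proof is correct and follows essentially the same route as the paper: reduce to the classical fact that a real matrix has eigenvalues which are real or occur in conjugate pairs, then track the single scalar $c$ that appears when lifting from $\pgl{k,\C}$ to $\gl{k,\C}$, observing that multiplication by $c$ rotates the real axis to a line $\ell$ and carries complex conjugation to reflection about $\ell$. The paper's argument is terser and leaves the verification of the reflection formula implicit, whereas you make the computation $\rho_\ell(c\mu)=c\bar\mu$ explicit and separately address the ``for any representative'' clause; these are welcome elaborations but not a different method.
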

\begin{proof}
By elementary linear algebra, a linear transformation over $\R$ will necessarily have eigenvalues which are either real or occur with their complex conjugate also being an eigenvalue. This means that the eigenvalues of a transformation over $\R$ will be contained in or pairwise swapped by reflection about the real axis in $\C$. 

An element in $\pgl{k, \C}$ is projectively real if and only if it is a scalar multiple of an element in $\pgl{k, \R}$. This scalar multiplication results in a simultaneous scalar multiplication of each eigenvalue, which acts as a rotation about the origin in $\C$. Hence, the scalar multiplication rotates the real axis to some line $\ell$ through the origin in $\C$, such that the eigenvalues are either contained in or pairwise swapped by reflection about $\ell$. Since eigenvalues are conjugation invariant, the result follows. 

\end{proof}

If such a line $\ell$ exists, then we say that the eigenvalues of $M$ are \emph{$\pgl{k, \R}$ compatible}. \label{pglRcompatible}

Let $M \in \pgl{k, \C}$ have eigenvalues $\lambda_1, \lambda_2, \cdots, \lambda_k$. We say that $M$ is

\begin{itemize}
    \item (strictly) hyperbolic if there exists a real line $\ell$ through the origin in $\C$ such that all of the eigenvalues are contained in $\ell$. In this case we call the eigenvalues and associated eigendirections hyperbolic. \label{hyperbolicEigenval}
    \item (strictly) elliptic if there exists a real line $\ell$ through the origin in $\C$ such that the eigenvalues can be paired $(\lambda_{2j}, \lambda_{2j+1})$ and reflection about $\ell$ maps $\lambda_{2j} \leftrightarrow \lambda_{2j+1}$ for every pair. In this case we call the eigenvalues and associated eigendirections elliptic. \label{ellipticEigenval} 
    \item mixed (hyperbolic and elliptic) if it is $\pgl{k, \R}$ compatible but not strictly hyperbolic or strictly elliptic.
\end{itemize}

These definitions of eigenvalues are consistent with an element of $\pgl{k, \C}$ being conjugate into $\pgl{k, \R}$ implying the elliptic eigenvalues can be paired so that the ratio of each pair is in $S^1$ and the ratio of any two hyperbolic eigendirections is real. 

Transformations being hyperbolic and elliptic are not mutually exclusive. For example, consider an element in $\pgl{2, \C}$ with eigenvalues $i$ and $-i$. The eigenvalues are contained in a real line through the origin in $\hat{\C}$, namely the imaginary axis, but are also inverted by the conjugation corresponding to the projective $\R$ form coming from $\R^2 \subseteq \C^2$. In the case where a pair of eigenvalues are contained in a line through the origin in $\C$ and swapped by reflection about another line through the origin in $\C$, the transformation may preserve more projective $\R$ forms than it would otherwise. 

\begin{example}
In the case where different choices of $\ell$ would result in different collections of eigendirections being labelled as elliptic or hyperbolic, this coincides with there being two fundamentally different possible ways to conjugate a representative of the individual projective transformation into $\pgl{k, \R}$. For example, consider the matrix
    $$M = \begin{bmatrix}
        i & 0 & 0 & 0 \\
        0 & -i & 0 & 0 \\
        0 & 0 & 2 & 0 \\
        0 & 0 & 0 & -2
    \end{bmatrix}$$
The real line $\ell$ in $\C$ could be either the real axis or imaginary axis. In the case where the real axis is considered as $\ell$, the conjugation might use
    $$\Gamma = \begin{bmatrix}
        i & -i & 0 & 0 \\
        1 & 1 & 0 & 0 \\
        0 & 0 & 1 & 0 \\
        0 & 0 & 0 & 1
    \end{bmatrix}$$
to conjugate $M$ to 
\begin{align*}
\Gamma^{-1} M \Gamma &= 
    \begin{bmatrix}
        0 & -1 & 0 & 0 \\
        1 & 0 & 0 & 0 \\
        0 & 0 & 2 & 0 \\
        0 & 0 & 0 & -2
    \end{bmatrix} \in \pgl{k, \R}
\end{align*}

On the other hand, if the imaginary axis is considered as $\ell$ the conjugation might use
    $$\Gamma = \begin{bmatrix}
        1 & 0 & 0 & 0 \\
        0 & 1 & 0 & 0 \\
        0 & 0 & i & -i \\
        0 & 0 & 1 & 1
    \end{bmatrix}$$
to conjugate $M$ to
\begin{align*}
\Gamma^{-1} M \Gamma &=
    \begin{bmatrix}
        i & 0 & 0 & 0 \\
        0 & -i & 0 & 0 \\
        0 & 0 & 0 & 2i \\
        0 & 0 & -2i & 0 
    \end{bmatrix} 
    = 
    \begin{bmatrix}
        1 & 0 & 0 & 0 \\
        0 & -1 & 0 & 0 \\
        0 & 0 & 0 & 2 \\
        0 & 0 & -2 & 0
    \end{bmatrix}
    \in \pgl{k, \R}
\end{align*}
\end{example}
This is a subtle distinction, but it is determining which pair(s) of eigendirections act as a source/sink and which pair(s) of eigendirections act as points of rotation. We will study projective transformations which have representatives without this possibility. If a representative of an element in $\pgl{k, \C}$ has no repeated eigenvalues and no pair of eigenvalues are both contained in some line through the origin in $\C$ and swapped by reflection about some line through the origin in $\C$, then we say that the projective transformation has \emph{generic eigenvalues}. For a projective transformation with generic eigenvalues, an eigenvalue or eigendirection being hyperbolic and being elliptic are mutually exclusive.

A \emph{conjugation} on a complex vector space $\C^k$ is a conjugate linear involution. By \cite{Conrad:Complexification}, conjugations are in bijection with $\R$ forms. For an $\R$ form $R$ in $\C^k$, we can represent any $v \in \C^k$ in terms of the basis whose $\R$ span is $R$. A conjugation can therefore be thought of as componentwise complex conjugation of points in this basis.

\begin{example}
Consider the $\C$ basis $\beta = \set{[1+i, 0, 0], [0, i, 0], [0, 0, 1]}$ for $\C^3$ and let $R$ be the $\R$ form consisting of the real span of $\beta$. Let $a = [1-i, 3, 5]$. In terms of the basis for $R$, 
    $$a = i [1+i, 0, 0] + -3i [0, i, 0] + 5 [0, 0, 1] \text{ so } [a]_{\beta} = [i, -3i, 5]$$
Then, with respect to the conjugation corresponding to $R$ we have
    $$\conj{a} = \conj{[a]_{\beta}} = [-i, 3i, 5] = -i [1+i, 0, 0] + 3i [0, i, 0] + 5 [0, 0, 1] = [-1-i, -3, 5]$$
\end{example}

We say that a collection of eigendirections in $\C P^{k-1}$ from some elements in $\pgl{k, \C}$ \emph{respects a conjugation} if the hyperbolic eigendirections are fixed by the conjugation and elliptic eigendirection pairs are swapped by the conjugation. Equivalently, we say that the conjugation respects the collection of eigendirections.

\begin{restatable}{lemma}{conjugateRepresentative}
If $v \in \C P^{k-1}$, then the componentwise complex conjugate of $v$, $\conj{v}$, does not depend on the choice of representative of $v$. That is, if $v \sim v' \in \C P^{k-1}$ then $\conj{v} \sim \conj{v'}$. 
\end{restatable}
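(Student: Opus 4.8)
The plan is to reduce the claim to the elementary fact that componentwise complex conjugation on $\C^k$ is conjugate linear, so that rescaling a representative only rescales its conjugate.

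First I would unpack what $v \sim v'$ in $\C P^{k-1} = (\C^k \setminus \set{0})/\C^\times$ means: there is a nonzero scalar $\lambda \in \C^\times$ with $v' = \lambda v$. Writing $v = (v_1, \dots, v_k)$, componentwise conjugation gives $\conj{v} = (\conj{v_1}, \dots, \conj{v_k})$, and since $\conj{\lambda v_j} = \conj{\lambda}\,\conj{v_j}$ for each coordinate, we get $\conj{v'} = \conj{\lambda v} = \conj{\lambda}\,\conj{v}$. Because $\lambda \neq 0$ we also have $\conj{\lambda} \neq 0$, so $\conj{v'}$ and $\conj{v}$ differ by the nonzero scalar $\conj{\lambda}$ and therefore represent the same point of $\C P^{k-1}$; in particular $v \mapsto \conj{v}$ is a well defined map $\C P^{k-1} \to \C P^{k-1}$. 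The identical argument applies verbatim to any conjugation in the sense defined above (a conjugate linear involution, which is componentwise conjugation in the basis whose $\R$ span is the associated $\R$ form), since conjugate linearity is the only property used.

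There is essentially no obstacle here; the one point worth flagging is that $\conj{\cdot}$ must not be treated as $\C$-linear — it sends $\lambda v$ to $\conj{\lambda}\,\conj{v}$ rather than $\lambda\,\conj{v}$ — but this is harmless because we only need equality up to scaling by $\C^\times$, and $\C^\times$ is closed under complex conjugation.
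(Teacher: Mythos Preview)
Your proof is correct and is essentially identical to the paper's: both pick a representative, write $v' = \lambda v$ (the paper uses $s$), compute $\conj{v'} = \conj{\lambda}\,\conj{v}$ coordinatewise, and conclude $\conj{v'} \sim \conj{v}$. Your additional remarks about conjugate linearity and the general conjugation case are fine but go slightly beyond what the paper records.
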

\begin{proof}
Let $v = [x_1, x_2, \cdots, x_k]$ so that $\conj{v} = [\conj{x_1}, \conj{x_2}, \cdots, \conj{x_k}]$. Then since $v' \sim v$, we have $v' = [s x_1, s x_2, \cdots, s x_k]$ for some $s \in \C$. Taking the componentwise complex conjugate we have $\conj{v'} = [\conj{s} \conj{x_1}, \conj{s} \conj{x_2}, \cdots, \conj{s} \conj{x_k}] = \conj{s} [\conj{x_1}, \conj{x_2}, \cdots, \conj{x_k}] = \conj{s} \conj{v} \sim \conj{v}$. 

\end{proof}

\begin{restatable}{theorem}{conjugationTheorem} \label{conjugationTheorem}
A collection of elements in $\pgl{k, \C}$ with $\pgl{k, \R}$ compatible generic eigenvalues is simultaneously conjugate into $\pgl{k, \R}$ if and only if there exists an $\R$ form $R$ in $\C^k$ such that the eigendirections of each element respect the conjugation corresponding to $R$.
\end{restatable}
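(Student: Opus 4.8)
The plan is to prove both implications by first using a single change of basis to reduce to the case where the $\R$ form in question is the standard one $\R^k\subseteq\C^k$ with componentwise conjugation $c_0$, and then running elementary real linear algebra. For the forward implication I would start from the definition of simultaneous conjugacy: there is one $\Gamma\in\pgl{k,\C}$ with each $N_j:=\Gamma^{-1}M_j\Gamma\in\pgl{k,\R}$, so pick real representatives $\hat N_j\in\gl{k,\R}$. Genericity makes the $\hat N_j$ diagonalizable with distinct eigenvalues, and reality forces their eigenvalues to be real or to occur in complex-conjugate pairs; genericity then pins down the real eigenvalues as exactly the hyperbolic ones and the conjugate pairs as exactly the elliptic ones. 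A one-dimensional eigenspace of a real eigenvalue is the solution set of a real homogeneous system, hence spanned by a real vector, so that hyperbolic eigendirection is fixed by $c_0$; and $\hat N_j v=\lambda v$ gives $\hat N_j\conj{v}=\conj{\lambda}\,\conj{v}$, so the two eigendirections of a conjugate pair are swapped by $c_0$. Thus the eigendirections of each $N_j$ respect $c_0$. Transporting back by $\Gamma$: the eigendirections of $M_j$ are the $\Gamma$-images of those of $N_j$, and $v\mapsto\Gamma\,c_0(\Gamma^{-1}v)$ is exactly the conjugation associated to the $\R$ form $R:=\Gamma(\R^k)$, so a direct check shows the eigendirections of every $M_j$ respect this conjugation.

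For the converse, suppose such an $R$ exists, with conjugation $c$. Since $\gl{k,\C}$ acts transitively on $\R$ forms, choose $\Gamma_0$ with $\Gamma_0(\R^k)=R$ and replace each $M_j$ by $\Gamma_0^{-1}M_j\Gamma_0$; this transforms eigendirections by $\Gamma_0^{-1}$ and conjugating $c$ by $\Gamma_0$ yields $c_0$, so I may assume the respected conjugation is $c_0$. Now fix one generator $M$ and a representative. Its eigenvalues being $\pgl{k,\R}$ compatible, Lemma~\ref{eigenvalLemma} supplies a line $\ell=e^{i\theta}\R$, and rescaling the representative by $e^{-i\theta}$ (which does not move eigendirections) I may assume the hyperbolic eigenvalues are real and the elliptic eigenvalues come in conjugate pairs. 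Diagonalize this representative as $\hat M=PDP^{-1}$, taking as columns of $P$ a real representative for each hyperbolic eigendirection (possible since these are $c_0$-fixed) together with a representative $v$ and its conjugate $\conj{v}$ for each elliptic pair (legitimate because the elliptic partner of $[v]$ is $[\conj{v}]$, with eigenvalue $\conj{\lambda}$). Then $\conj{P}=P\Pi$ and $\conj{D}=\Pi D\Pi$, where $\Pi=\Pi^{-1}$ is the permutation swapping each $v\leftrightarrow\conj{v}$, so $\conj{\hat M}=(P\Pi)(\Pi D\Pi)(\Pi P^{-1})=PDP^{-1}=\hat M$ and hence $M\in\pgl{k,\R}$. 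So every generator preserves the common projective $\R$ form $\R P^{k-1}$, and Corollary~\ref{conjIFFrForm} delivers simultaneous conjugacy into $\pgl{k,\R}$.

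I expect the converse to be the step requiring the most care. The hypothesis constrains only the eigendirections — not the eigenvalues, and certainly not the matrices — and the rotation $e^{-i\theta}$ needed to align the compatibility line with the real axis is in general different for each generator; the substance of the proof is that, once the ambient $\R$ form has been normalized, these per-generator rescalings nevertheless reconstruct genuinely real representatives, and it is precisely the generic-eigenvalue hypothesis (no repeated eigenvalues, and no pair of eigenvalues simultaneously collinear with, and reflected across, lines through the origin) that makes the hyperbolic/elliptic split unambiguous and the $\Pi$-bookkeeping legitimate. The forward direction, by contrast, is essentially the classical observation that a real matrix with simple spectrum has real eigenvectors for its real eigenvalues and conjugate eigenvectors for its conjugate eigenvalues, repackaged in projective and basis-independent language.
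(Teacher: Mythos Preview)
Your proof is correct, and the forward direction is essentially identical to the paper's: both reduce to the standard $\R$ form via $\Gamma$ and invoke the classical fact that a real matrix with simple spectrum has real eigenvectors for real eigenvalues and componentwise-conjugate eigenvectors for conjugate eigenvalue pairs.

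The converse directions differ in mechanism. The paper argues abstractly: if the eigendirections respect the conjugation $c$ corresponding to $R$, then because an eigenbasis determines $c$ by conjugate-linearity, each $M$ commutes (projectively) with $c$, hence lies in the stabilizer of the projective $\R$ form, which is a conjugate of $\pgl{k,\R}$ by the earlier lemmas. You instead build an explicit real representative: after normalizing $R$ to $\R^k$ and rotating each representative by $e^{-i\theta}$ to align the compatibility line with the real axis, you diagonalize $\hat M=PDP^{-1}$ with columns of $P$ chosen so that $\conj{P}=P\Pi$ and $\conj{D}=\Pi D\Pi$, whence $\conj{\hat M}=\hat M$. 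Your route is more computational but also more self-contained and transparent about exactly where the $\pgl{k,\R}$ compatibility and genericity hypotheses enter (the per-generator rescaling and the unambiguous hyperbolic/elliptic split, respectively); the paper's argument is shorter and more conceptual, but tacitly relies on the same eigenvalue bookkeeping to justify that ``eigendirections respect $c$'' really does force $Mc=cM$ projectively, and leans on Corollary~\ref{conjIFFrForm} and the stabilizer lemmas to finish.
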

\begin{proof}
Conjugate transformations represent the same linear transformation with respect to different bases. If a change of basis maps the $\R$ form $R$ to the $\R$ form $R'$, then by definition of conjugation that change of basis will map elements which are conjugate with respect to the conjugation corresponding to $R$ to elements which are conjugate with respect to the conjugation corresponding to $R'$.

$\Longrightarrow$ \\
Since matrices over $\R$ are well known to have non-real eigendirections in componentwise complex conjugate pairs, a collection of projective transformations simultaneously conjugate into $\pgl{k, \R}$ will necessarily have each element such that the eigendirections respect the conjugation corresponding to the $\R$ form which is transformed to $\R^k \subseteq \C^k$ by the change of basis corresponding to the conjugation action which maps the projective transformations into $\pgl{k, \R}$. 

$\Longleftarrow$ \\
If an element of $\pgl{k, \C}$ commutes with the conjugation corresponding to a projective $\R$ form $R$, then it is in the stabilizer of $R$. If the eigendirections of an element respect the conjugation corresponding to $R$, then since conjugation on a basis determines conjugation on the entire space, we have that the element commutes with the conjugation. Since the stabilizer of any projective $\R$ form is a conjugate of $\pgl{k, \R}$, the result follows. 

\end{proof}

\begin{restatable}{theorem}{uniqueness} \label{uniqueness}
If a collection of diagonalizable transformations in $\pgl{k, \C}$ has eigendirections that can be partitioned into subsets which lift into the $\C$ span of disjoint, nonempty subsets of some basis for $\C^k$, such that each elliptic eigendirection lives in the same subset as its elliptic eigendirection pair, then the collection preserves either 0 or infinitely many projective $\R$ forms. 
\end{restatable}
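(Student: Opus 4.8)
The plan is to promote a single preserved projective $\R$ form into a positive-dimensional family by exploiting the block structure, which then yields the dichotomy at once. Write the distinguished basis of $\C^k$ as a disjoint union of the given nonempty blocks $B_1,\dots,B_m$, let $V_i$ be the span of $B_i$, and let $S_i$ be the corresponding part of the given eigendirection partition, so that $S_i$ lifts into $V_i$. Because each transformation is diagonalizable, the eigendirections occurring in the collection span $\C^k$; since they all lie in $\bigcup_i V_i$, this forces $\C^k=V_1\oplus\dots\oplus V_m$ and, by a dimension count, forces the eigendirections in $S_i$ to span $V_i$. Moreover each eigenspace of each $M_j$, being a union of eigendirections, lies in $\bigcup_i V_i$, hence --- a vector space being no finite union of proper subspaces --- inside a single $V_i$; so every $M_j$ is block diagonal for this decomposition.

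If the collection preserves no projective $\R$ form we are done, so suppose it preserves one. Then the collection is simultaneously conjugate into $\pgl{k, \R}$ by Corollary~\ref{conjIFFrForm}, and Theorem~\ref{conjugationTheorem} furnishes an $\R$ form $R$ whose corresponding conjugation $\sigma$ is respected by the eigendirections of every $M_j$ --- each hyperbolic eigendirection fixed, each elliptic pair swapped. The crux of the argument is that $\sigma$ must respect the decomposition, i.e.\ $\sigma(V_i)=V_i$ for every $i$. Indeed, $\sigma$ sends an eigendirection in $S_i$ to itself (if hyperbolic) or to its elliptic partner (if elliptic), and by hypothesis that partner again lies in $S_i$; applying this to a collection of eigendirections in $S_i$ spanning $V_i$, we find that $\sigma$ carries a basis of $V_i$, represented by suitable vectors, into $V_i$. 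Since $\sigma$ is $\R$-linear and $V_i$ is a complex subspace, $\sigma(V_i)\subseteq V_i$, and equality follows because $\sigma$ is an involution. Hence $\sigma=\bigoplus_i\sigma|_{V_i}$ and $R=\bigoplus_i R_i$, where $R_i:=R\cap V_i$ is an $\R$ form of $V_i$.

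For each $\vec\theta=(\theta_1,\dots,\theta_m)\in\R^m$ define the conjugate linear map $\sigma_{\vec\theta}:=\bigoplus_i e^{2i\theta_i}\,\sigma|_{V_i}$. Each summand is again an involution, so $\sigma_{\vec\theta}$ is a conjugation, and a short computation identifies its $\R$ form as $R_{\vec\theta}=\bigoplus_i e^{i\theta_i}R_i$. On each $\mathbb{P}(V_i)$ (the image of $V_i$ in $\C P^{k-1}$) the maps $\sigma_{\vec\theta}$ and $\sigma$ agree --- they differ only by the scalar $e^{2i\theta_i}$ --- and every eigendirection of the collection lies in one of the $\mathbb{P}(V_i)$, so $\sigma_{\vec\theta}$ is respected by all these eigendirections exactly when $\sigma$ is. By Theorem~\ref{conjugationTheorem} the collection therefore preserves the projective $\R$ form $\mathbb{P}(R_{\vec\theta})$ for every $\vec\theta$. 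Finally, using that $e^{i\alpha}R_i=e^{i\beta}R_i$ if and only if $\alpha\equiv\beta\pmod\pi$, one checks that $\mathbb{P}(R_{\vec\theta})=\mathbb{P}(R_{\vec\phi})$ precisely when $\theta_i-\phi_i$ is independent of $i$ modulo $\pi$; so the projective $\R$ forms produced are parametrized by $(\R/\pi\Z)^m$ modulo its diagonal, that is, by $(\R/\pi\Z)^{m-1}$. Provided the partition is nontrivial ($m\ge2$) this set is infinite, which proves the claim. (Equivalently, the $M_j$ being block diagonal means the collection commutes with the torus $\set{[\bigoplus_i z_i I_{V_i}]:z_i\in\C^\times}\subseteq\pgl{k, \C}$, and the orbit of any preserved projective $\R$ form under this torus again consists of preserved projective $\R$ forms.)

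The step I expect to be the genuine obstacle is the crux above: showing that a single preserved $\R$ form must itself split along the $V_i$. This is exactly where the hypothesis that elliptic eigendirection pairs are not separated by the partition is indispensable --- if two elliptic partners sat in different blocks, $\sigma$ would intertwine those blocks, the splitting $R=\bigoplus_i R_i$ would fail, and the family would collapse. Everything else --- block-diagonality together with the spanning count, the bookkeeping for $\sigma_{\vec\theta}$ and $R_{\vec\theta}$, and the final parametrization --- is routine.
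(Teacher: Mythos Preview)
Your proof is correct and takes the same approach as the paper: start from one preserved projective $\R$ form, argue that the associated conjugation respects the block decomposition (the crux you correctly identified, and precisely where the elliptic-pair hypothesis enters), then twist by block-wise scalars in $S^1$ to produce infinitely many preserved projective $\R$ forms. The paper only writes out the case of two blocks while you treat $m$ blocks and record the full $(\R/\pi\Z)^{m-1}$ parametrization, but the underlying idea is identical.
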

\begin{proof}
Suppose that the collection of transformations preserves some common projective $\R$ form, which implies that the eigenvalues are $\pgl{k, \R}$ compatible. Further, suppose that the collection of eigendirections can be partitioned into subsets which lift into the $\C$ span of disjoint, nonempty subsets $\set{v_1, \cdots, v_j}$ and $\set{v_{j+1}, \cdots, v_k}$ of some basis $\set{v_1, \cdots, v_k}$ for $\C^k$, such that each elliptic eigendirection lives in the same subset as its elliptic eigendirection pair.


Since there exists a preserved projective $\R$ form, there exists a conjugation on $\C^k$ and lifts of the eigendirections such that they respect that conjugation. Because every elliptic eigendirection and its elliptic eigendirection pair lift to the span of the same set, there exists a conjugation on $\C^k$ which respects these lifts of the eigendirections and maps the $\C$ span of $\set{v_1, \cdots, v_j}$ to the $\C$ span of $\set{v_1, \cdots, v_j}$ and the $\C$ span of $\set{v_{j+1} \cdots, v_k}$ to the $\C$ span of $\set{v_{j+1} \cdots, v_k}$. This conjugation corresponds to an $\R$ form which projects to a preserved projective $\R$ form. This $\R$ form will necessarily come from a basis which contains disjoint subsets whose $\C$ spans are equal to the $\C$ spans of $\set{v_1, \cdots, v_j}$ and $\set{v_{j+1}, \cdots, v_k}$ respectively. 

We can modify the lifts and conjugation to produce infinitely many distinct projective $\R$ forms which are preserved. For every eigendirection $v$ that lifted to $\tilde{v}$ in the $\C$ span of $\set{v_1, \cdots, v_j}$, instead lift to $s \tilde{v}$ for some fixed $s \in S^1 / S^0$. Leave the lifts in the $\C$ span of $\set{v_{j+1}, \cdots, v_k}$ unchanged. The conjugation which respects these lifts will correspond to the $\R$ form that had its basis elements whose $\C$ span is equals the $\C$ span of $\set{v_1, \cdots, v_k}$ multiplied by the same $s$. This produces a distinct projective $\R$ form for every distinct $s \in S^1 / S^0$ because the the first $j > 0$ components of the $\R$ form are rotated by $s \notin \R$ and the remaining $(k - j) > 0$ components are unchanged. In this case a single preserved projective $\R$ form implies infinitely many preserved projective $\R$ forms. 

\end{proof}

\begin{restatable}{theorem}{zeroONEinfinite}
A collection of diagonalizable transformations in $\pgl{k, \C}$ will preserve either $0, 1,$ or infinitely many projective $\R$ forms. 
\end{restatable}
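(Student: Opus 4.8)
The plan is to show that the ``many'' case of Theorem~\ref{uniqueness} and the ``more than one'' case of the present statement coincide, so that the trichotomy $0,1,\infty$ follows from Theorem~\ref{uniqueness} together with Theorem~\ref{conjugationTheorem}. Concretely, I would argue: if a collection of diagonalizable transformations in $\pgl{k,\C}$ preserves at least two distinct projective $\R$-forms, then it preserves infinitely many; combined with the obvious fact that the number of preserved projective $\R$-forms is a nonnegative integer or infinite, this gives the $0,1,\infty$ dichotomy.

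First I would set up notation: let the collection be $\set{M_\ell}$, and let $R, R'$ be two distinct preserved projective $\R$-forms. By Corollary~\ref{conjIFFrForm} and Lemma~\ref{Rform}, each $M_\ell$ lies in the stabilizer of both $R$ and $R'$; equivalently, by Theorem~\ref{conjugationTheorem} the eigendirections of each $M_\ell$ respect both the conjugation $\sigma$ corresponding to $R$ and the conjugation $\sigma'$ corresponding to $R'$. After a simultaneous change of basis (which, as noted in the proof of Theorem~\ref{conjugationTheorem}, transports preserved $\R$-forms to preserved $\R$-forms) I may assume $R$ is the standard form $\R^k \subseteq \C^k$, so $\sigma$ is componentwise complex conjugation. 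The composite $\tau = \sigma' \circ \sigma$ is then a $\C$-linear automorphism of $\C^k$; since $\sigma$ fixes each hyperbolic eigendirection and swaps each elliptic pair, and $\sigma'$ does likewise, $\tau$ fixes every eigendirection of every $M_\ell$ (hyperbolic directions are fixed by both; an elliptic pair is swapped twice, hence fixed setwise, and since $R\neq R'$ one checks the two swaps are not ``the same'' unless $\tau$ genuinely acts nontrivially — this is the delicate point). Thus $\tau$ is a diagonalizable $\C$-linear map commuting with every eigendirection, so it is diagonal in a basis $\set{v_1,\dots,v_k}$ adapted to the eigendirection structure, and $\sigma' = \tau\sigma$. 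The key structural consequence is that the eigendirections partition into the $\tau$-eigenspaces, i.e.\ into $\C$-spans of disjoint subsets of $\set{v_1,\dots,v_k}$, with each elliptic pair inside a single block (since $\sigma$ must still preserve each block's span for $\sigma'=\tau\sigma$ to be an involution and a conjugation). This is exactly the hypothesis of Theorem~\ref{uniqueness} (with a genuine, i.e.\ size $<k$, block appearing precisely because $R\neq R'$ forces $\tau$ to be non-scalar), so the collection preserves infinitely many projective $\R$-forms.

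The main obstacle I anticipate is the bookkeeping around the ``genericity'' of eigenvalues and the borderline case where $\tau$ turns out to be scalar: I need to rule out that two ``different'' preserved $\R$-forms could actually differ only by the $S^1$-fiber ambiguity in a way that does not produce a nontrivial block decomposition. The resolution is the observation already used in Theorem~\ref{uniqueness}: projective $\R$-forms that differ by scaling all basis vectors by a common $e^{i\theta}$ are \emph{equal}, so $R\neq R'$ as projective $\R$-forms forces $\tau$ to have at least two distinct eigenvalues up to the $S^1/S^0$ ambiguity, hence a proper block split. Once that is pinned down, the remaining steps are direct appeals to the earlier lemmas, and the statement follows by noting: zero preserved forms is one option; otherwise there is at least one, and if there are two there are infinitely many — so the count is $0$, $1$, or $\infty$.
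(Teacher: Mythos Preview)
Your plan has a genuine gap at the step where you claim each elliptic pair lies in a single $\tau$-eigenspace block. The justification you give---``$\sigma$ must still preserve each block's span for $\sigma'=\tau\sigma$ to be an involution and a conjugation''---is incorrect. The involution condition $(\tau\sigma)^2=I$ is equivalent to $\sigma\tau\sigma=\tau^{-1}$; since $\sigma$ is conjugate-linear this forces $\sigma$ to send the $\mu$-eigenspace of $\tau$ to the $\bar\mu^{-1}$-eigenspace, \emph{not} to itself. So when $|\mu|\neq 1$ an elliptic pair can, and does, get split between $E_\mu$ and $E_{\bar\mu^{-1}}$. Concretely: take a single elliptic $M\in\pgl{2,\C}$ with eigendirections $[1,0],[0,1]$; it preserves every circle centred at the origin. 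Choosing $R$ the unit circle and $R'$ the circle of radius $2$, one computes $\sigma(z,w)=(\bar w,\bar z)$, $\sigma'(z,w)=(2\bar w,\bar z/2)$, hence $\tau=\mathrm{diag}(2,1/2)$. The elliptic pair is split between the two $\tau$-eigenspaces, and no nontrivial partition keeping the pair together exists---so the hypothesis of Theorem~\ref{uniqueness} is simply \emph{not} satisfied, even though infinitely many projective $\R$-forms are preserved. Thus your reduction ``$\ge 2$ preserved $\Rightarrow$ hypothesis of Theorem~\ref{uniqueness}'' fails.

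The paper avoids this by arguing the other direction: it does not try to deduce the block hypothesis from having two preserved forms. Instead it shows directly that when the eigendirections admit \emph{no} nontrivial block decomposition, the lifts of the eigenbases to $\C^k$ are unique up to a common $S^1/S^0$ scalar, so at most one conjugation can respect them and hence at most one projective $\R$-form is preserved. The trichotomy then follows by combining this with Theorem~\ref{uniqueness}. Your $\tau$-argument can be repaired, but not by the one line you wrote: you would need to coarsen the blocks to $E_\mu\oplus E_{\bar\mu^{-1}}$ (which \emph{are} $\sigma$-stable), and then handle separately the case where this coarsening collapses to a single block---there $|\mu|\neq 1$, so $\tau$ has infinite projective order and the conjugations $\tau^n\sigma$ already give infinitely many preserved forms.
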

\begin{proof}
If there exists an eigenbasis for each projective transformation such that the eigendirections from these eigenbases can be partitioned into the spans of disjoint, nonempty subsets of some basis for $\C^k$, then by linearity and by the previous theorem there are either $0$ or infinitely many projective $\R$ forms preserved. Assume then that no eigenbases exist such that the eigendirections can be partitioned in this way. 

If there do not exist eigenbasis that can be lifted to the $\C$ span of disjoint, nonempty subsets of some basis for $\C^k$, then up to scaling by $S^1 / S^0$ there is a unique lift of the collection of eigendirections and therefore up to $S^1 / S^0$ a unique $\R$ form whose corresponding conjugation respects lifts of the eigenbases. Thus, there is at most a unique projective $\R$ form preserved. 

\end{proof}

\begin{restatable}{corollary}{uniqueCorollary} \label{uniqueCorollary}
If a collection of transformations in $\pgl{k, \C}$ with $\pgl{k, \R}$ compatible generic eigenvalues has eigendirections that can not be partitioned into subsets which lift into the $\C$ span of disjoint, nonempty subsets of some basis for $\C^k$, such that each elliptic eigendirection lives in the same subset as its elliptic eigendirection pair, then the collection preserves at most $1$ projective $\R$ form.
\end{restatable}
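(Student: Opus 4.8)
The plan is to obtain this as the ``at most one'' complement to Theorem \ref{uniqueness}, using the trichotomy of Theorem \ref{zeroONEinfinite} to close the argument. I would proceed by contraposition: assume the collection preserves more than one projective $\R$ form, and show its eigendirections must admit the very partition the hypothesis forbids. By Theorem \ref{zeroONEinfinite} the collection then preserves infinitely many projective $\R$ forms, so by Lemma \ref{Rform} together with Theorem \ref{conjugationTheorem} there are at least two distinct conjugations $C_1 \neq C_2$ on $\C^k$, each corresponding to a preserved projective $\R$ form and each respected by the eigendirections of every element (hyperbolic eigendirections fixed, elliptic pairs swapped).

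Next I would extract the partition directly from $C_1$ and $C_2$. Put $g = C_1 \circ C_2$; this is a $\C$-linear automorphism of $\C^k$, being a composite of two conjugate-linear maps. Since both $C_i$ respect the eigendirections, a direct check shows $g$ fixes every eigendirection of every element: a hyperbolic eigendirection $[v]$ has $C_i[v] = [v]$, hence $g[v] = [v]$; and for an elliptic pair $\{[a],[b]\}$ each $C_i$ interchanges $[a]$ and $[b]$, so $g[a]=[a]$ and $g[b]=[b]$. Because the generic-eigenvalue hypothesis gives each element $k$ distinct eigendirections spanning $\C^k$, such an eigenbasis diagonalizes $g$, so $\C^k$ decomposes as the direct sum of the $g$-eigenspaces, each is the $\C$-span of a subset of a basis, and every eigendirection of every element lifts into a single $g$-eigenspace. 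If $g$ were scalar then $C_1$ and $C_2$ would differ by a unit-modulus scalar, and one checks their $\R$ forms then project to the \emph{same} projective $\R$ form, contrary to distinctness; hence $g$ is non-scalar and the $g$-eigenspace decomposition has at least two nonzero summands.

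It remains to arrange that each elliptic pair sits inside a single summand, so that the resulting partition genuinely contradicts the hypothesis, which only forbids partitions that keep elliptic pairs together. I would do this by amalgamating any two summands joined by an elliptic pair: the amalgamated blocks still have pairwise disjoint spanning basis-subsets, every eigendirection still lies in a single block, and now every elliptic pair lies in one block; provided at least two blocks survive, this is the forbidden partition and we are done. Ruling out the degenerate case in which everything amalgamates into a single block is the main obstacle, and is the point where the bookkeeping between the ``disjoint spans'' structure and the ``elliptic pairs together'' structure has to be made precise; I expect to handle it by reading the construction in the proof of Theorem \ref{uniqueness} backwards, since that construction exhibits exactly how a nontrivial elliptic-pair-respecting partition is the source of infinitely many preserved forms. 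Alternatively, one can bypass the $g$-argument entirely and simply observe that the hypothesis is precisely the case left uncovered by the ``$0$ or infinitely many'' alternative of Theorem \ref{uniqueness}, so that Theorem \ref{zeroONEinfinite} forces the count to be $0$ or $1$.
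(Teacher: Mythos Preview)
Your main $g = C_1 \circ C_2$ route is genuinely different from the paper's and pleasantly concrete, but the gap you flag is real and not cosmetic. An elliptic pair $[a],[b]$ need not land in one $g$-eigenspace: if $C_2 a = \mu b$ and $C_1 b = \nu a$ then $g a = \bar\mu\nu\, a$ while $g b = (\mu\bar\nu)^{-1} b = (1/\bar\lambda)\, b$ with $\lambda = \bar\mu\nu$, and these agree only when $|\lambda|=1$. Amalgamating $g$-eigenspaces along elliptic pairs can therefore collapse to a single block, and nothing you have written rules that out. A more principled repair is to group eigenspaces as $E_\lambda \oplus E_{1/\bar\lambda}$, using $C_1 g C_1 = g^{-1}$ to see that $C_1$ (hence every elliptic swap) carries $E_\lambda$ to $E_{1/\bar\lambda}$; but you must then still exclude the two-eigenvalue case $\{\lambda,1/\bar\lambda\}$ with $|\lambda|\neq 1$, which again yields one block.

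Your fallback of simply citing Theorems \ref{uniqueness} and \ref{zeroONEinfinite} does not close the argument either. Theorem \ref{uniqueness} only asserts the implication ``partition exists $\Rightarrow$ $0$ or $\infty$''; its contrapositive is ``exactly $1$ $\Rightarrow$ no partition,'' not ``no partition $\Rightarrow$ not $\infty$.'' So the trichotomy of Theorem \ref{zeroONEinfinite} alone cannot force the count down to $0$ or $1$ under your hypothesis.

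The paper does not argue by contraposition at all. It works directly: generic eigenvalues give each element a unique eigenbasis up to scaling individual vectors, and the non-partition hypothesis is exactly what prevents any freedom beyond a single global $S^1/S^0$ factor when choosing lifts of the eigendirections compatible with a conjugation. Hence (by conjugate linearity) the compatible conjugation is unique up to that global factor, and by Theorem \ref{conjugationTheorem} at most one projective $\R$ form is preserved. This is essentially the ``at most one'' half already embedded in the proof of Theorem \ref{zeroONEinfinite}, now specialized to the generic-eigenvalue setting with the elliptic-pair clause made explicit.
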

\begin{proof}
In the case where there are no repeated eigenvalues, there is a unique eigenbasis for each projective transformation up to scaling individual elements by $\C \bs \set{0}$. If the eigendirections cannot be partitioned into subsets which lift into the $\C$ span of some disjoint, nonempty subsets of some basis for $\C^k$ such that each elliptic eigendirection lives in the same subsets as its elliptic eigendirection pair, then by conjugate linearity of conjugation, there is at most one conjugation preserving lifts of the eigendirections, up to multiplying each element in the associated $\R$ form by $S^1 / S^0$. Hence, the lift of a single preserved projective $\R$ form determines all possible conjugations which are compatible with lifts of an eigenbasis for each projective transformation, so by theorem \ref{conjugationTheorem} there is at most one projective $\R$ form preserved.

\end{proof}

\begin{example}
Consider the collection $\set{[1, 0, 0], [0, 1, 0], [0, 0, 1], [0, 1, 1]}$ of hyperbolic eigendirections coming from transformations in $\pgl{3, \C}$. Because the eigendirections can be partititioned into subsets $\set{[0, 1, 0], [0, 0, 1], [0, 1, 1]}$ and $\set{[1, 0, 0]}$ which lift into the $\C$ span of disjoint subsets of basis vectors for $\C^3$, if any projective $\R$ form is preserved then infinitely many will be. 

We then begin considering projective frames which correspond to projective $\R$-forms which could be preserved by the transformations corresponding to the collection. A valid projective frame is $\set{[i, 0, 0], [0, 1, 0], [0, 0, 1], [i, 1, 1]}$ with associated basis $\set{[i, 0, 0], [0, 1, 0], [0, 0, 1]}$, since every eigendirection in the collection is the projection of a point in the real span of the associated basis. This projective $\R$-form clearly contains the point $[i, 1, 1]$.

Another valid projective frame is $\set{[1+i, 0, 0], [0, 1, 0], [0, 0, 1], [1+i, 1, 1]}$ with associated basis $\set{[1+i, 0, 0], [0, i, 0], [0, 0, i]}$, since every eigendirection in the collection is also the projection of a point in the real span of this basis. By simple computations, we see that $[i, 1, 1]$ is not in the projective $\R$-form coming from this projective frame, so this projective frame produces a distinct projective $\R$-form. 
\begin{align*}
    r_1 [1+i, 0, 0] + r_2 [0, 1, 0] + r_3 [0, 0, 1] &= z [i, 1, 1] && \text{ for } r_j \in \R, z \in \C \\
    [r_1 (1 + i), r_2, r_3] &= z [i, 1, 1] && \text{ no solutions}
\end{align*}

By the same reasoning, any pair of projective frames 
    $$\set{[1, 0, 0], [0, 1, 0], [0, 0, 1], [z_1, 1, 1]}$$ 
and 
    $$\set{[1, 0, 0], [0, 1, 0], [0, 0, 1], [z_2, 1, 1]}$$ 
will correspond to distinct projective $\R$-forms when $\arg(z_1) \neq \pm \arg(z_2)$. Hence, the collection of eigendirections $\set{[1, 0, 0], [0, 1, 0], [0, 0, 1], [0, 1, 1]}$ may correspond to transformations which preserve infinitely many projective $\R$-forms. 
\end{example}

Given a collection of transformations in $\pgl{k, \C}$, our goal is to determine if every element in the collection preserves a common projective $\R$-form. We first determine, using two distinct projective transformations from the collection, which projective $\R$-form might be preserved, if any. We then determine if the collection is simultaneously conjugate into $\pgl{k, \R}$ by studying the conjugation action on each eigendirection. In later chapters we create flags from the projective frame and the eigendirections of each transformation, and use those flags to compute cross ratios and triple ratios to determine if the collection is simultaneously conjugate into $\pgl{k, \R}$. 

\section{Determining the Preserved Projective \texorpdfstring{$\R$}{R}-form}
For the remainder of the paper, we assume that each projective transformation does not have repeat eigenvalues. This is a simplification so that there is projectively a unique eigenbasis for each transformation and therefore there are infinitely many preserved projective $\R$ forms only if the collection of eigendirections can be individually lifted to vectors that live in the span of disjoint, nonempty subsets of some basis for $\C^k$. We will use genericity conditions which prevent such disjointedness, thereby ensuring that the collections of projective transformations preserve at most a single projective $\R$ form. 

If a projective frame of hyperbolic eigendirections with distinct eigenvalues is present in the set of eigendirections coming from a collection of transformations, then that projective frame defines the only possible projective $\R$ form that might be preserved by the corresponding transformations in $\pgl{k, \C}$. 

If that is not the case, then naively we could solve a system of equations coming from cross ratios, which would determine points in $\C P^{k-1}$ that are preserved by the transformations with a given collection of eigendirections or simply map hyperbolic eigendirections to the projective $\R$ form coming from $\R^k \subseteq \C^k$ and elliptic eigendirection pairs to component-wise complex conjugate pairs, until our freedom to map is over, and then checking if the collection of projective transformations has been transformed into one in $\pgl{k, \R}$. 

Our new approach is to again notice that conjugations are in bijection with $\R$ forms. In particular, in theorem 4.11 Conrad \cite{Conrad:Complexification} describes explicitly the bijection. By thoerem 1 we know conjugation about a preserved $\R$ form would act by fixing hyperbolic eigendirections and swapping elliptic eigendirection pairs, so we can determine how conjugation acts on the whole space and can efficiently determine the corresponding $\R$ form. 

Once we have determined a projective $\R$ form which is preserved, if any, by the transformations corresponding to $k+1$ eigendirections (and perhaps their elliptic eigendirection pair, if not used), we want to choose $k+1$ points that form a projective frame for that projective $\R$ form. 

\begin{restatable}{theorem}{findRform}
Given a collection of $k+1$ hyperbolic and/or elliptic eigendirections from elements in $\pgl{k, \C}$, such that no $k$ of them are contained in any hyperplane, the only projective $\R$ form compatible with those eigendirections can be represented as
    $$\set{p = \sum\limits_{j=1}^{k} \lambda_j v_j \mid \lambda_j \in \C \text{ not all 0 and } p = \conj{p}} / \sim$$
where $\set{v_j}$ is a basis associated to the projective frame coming from the $k+1$ eigendirections and $\conj{p} = \sum\limits_{j=1}^{k} \conj{\lambda_j} v_j'$ with
    $$v_j' = 
    \begin{cases}
        v_j & \text{if hyperbolic} \\
        \text{its eigendirection pair} & \text{if elliptic}
    \end{cases}$$
written in an associated basis to the projective frame coming from the images of the $k+1$ eigendirections, and $\sim$ is scaling by $\C \bs \set{0}$. 
\end{restatable}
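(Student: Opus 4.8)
The plan is to exhibit the stated set as an $\R$-form by producing a basis of $\C^k$ whose real span projects to it, then invoke the earlier uniqueness results to conclude it is the \emph{only} possibility. The starting point is the projective frame $\set{p(e_0),\dots,p(e_{k-1}),p(e_0+\cdots+e_{k-1})}$ coming from the $k+1$ eigendirections; the genericity hypothesis (no $k$ of them in a hyperplane) is exactly what guarantees such a basis $\set{v_1,\dots,v_k}$ associated to the frame exists. I would first check that the prescribed map $p\mapsto\conj p$, defined on a point $p=\sum\lambda_j v_j$ by $\conj p=\sum\conj{\lambda_j}v_j'$ where $v_j'$ is $v_j$ for a hyperbolic eigendirection and the eigendirection pair for an elliptic one, is a well-defined conjugate-linear involution on $\C^k$ — i.e. a \emph{conjugation} in the sense defined before Lemma \ref{conjugateRepresentative}. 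Conjugate-linearity is immediate from the formula; the involution property requires that pairing elliptic eigendirections with their partners is itself an involution (each elliptic direction's partner's partner is itself) and that the conjugation, applied twice, returns $\lambda_j\mapsto\conj{\conj{\lambda_j}}=\lambda_j$ — here one must be slightly careful that the $v_j'$ are genuinely among the basis vectors, which is why the theorem demands the eigendirections themselves (and, where needed, their elliptic partners) be drawn from a single associated basis. By \cite{Conrad:Complexification} a conjugation corresponds to an $\R$-form, namely its fixed-point set $\set{p : p=\conj p}$, and quotienting by $\C\setminus\set 0$ gives a projective $\R$-form; this is precisely the displayed set.

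Next I would verify this projective $\R$-form is \emph{compatible} with the given eigendirections in the sense of Theorem \ref{conjugationTheorem}: each hyperbolic eigendirection $p(v_j)$ satisfies $\conj{v_j}=v_j$ by construction (up to the real scalar, hence fixed in $\C P^{k-1}$ by Lemma \ref{conjugateRepresentative}), and each elliptic pair $p(v_j),p(v_{j'})$ is swapped. One should double-check the marked point $p(e_0+\cdots+e_{k-1})$ of the frame as well, though this follows since that point is $p(v_1+\cdots+v_k)$ in an associated basis and the sum of fixed/swapped vectors is sent to the sum of their images, which is again real when the elliptic directions come in pairs within the sum — actually this point need not be fixed, but it does lie in the $\R$-form when we allow rescaling, which is all that is required for the frame to lie in a projective $\R$-form.

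Finally, uniqueness: the genericity hypothesis (distinct eigenvalues, and no $k$ of the $k+1$ directions in a common hyperplane) forces the eigendirections \emph{not} to be partitionable into spans of disjoint nonempty subsets of any basis respecting elliptic pairs — a hyperplane-avoidance argument shows $k+1$ points in general position cannot split this way — so Corollary \ref{uniqueCorollary} (via Theorem \ref{zeroONEinfinite}) applies and at most one projective $\R$-form is preserved; since we have just produced one that is compatible, it is the only one. The main obstacle I anticipate is the bookkeeping in the first step: making the definition of $\conj{\,\cdot\,}$ genuinely independent of the choices (the associated basis is only unique up to real scaling of each $v_j$, and rescaling $v_j$ by a complex unit would change the formula), so the statement implicitly fixes those scalings by insisting the $v_j$ and $v_j'$ all sit inside one chosen associated basis — I would make that hypothesis explicit and show that any two such choices of associated basis give the same fixed-point locus in $\C P^{k-1}$, using Lemma \ref{conjugateRepresentative} to absorb the scaling ambiguity.
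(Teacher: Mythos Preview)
Your approach is correct in outline but takes a different route from the paper. The paper does not verify abstractly that $p\mapsto\conj{p}$ is a conjugate-linear involution; instead it first applies a homography carrying the given projective frame to the standard one, so that the eigendirections come from transformations genuinely in $\pgl{k,\R}$. In that normalized setting the only candidate projective $\R$-form is manifestly the quotient of $\R^k\subseteq\C^k$, uniqueness is read off from Corollary~\ref{uniqueCorollary}, and the condition $p\in\R^k$ is rewritten as $p=\conj{p}$ using the elementary fact that eigendirections of a real matrix are real or occur in componentwise complex conjugate pairs. The general statement then follows by transporting back along the homography. This sidesteps precisely the bookkeeping you flag as the main obstacle.

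Your direct construction is viable, but note a small misreading: the $v_j'$ in the statement are \emph{not} required to lie among the original $\{v_j\}$; they form an associated basis for a \emph{second} projective frame, the one built from the images of the $k+1$ eigendirections under the hyperbolic/elliptic swap. So your involution check should compare the map $\sum\lambda_j v_j\mapsto\sum\conj{\lambda_j}v_j'$ with its inverse $\sum\mu_j v_j'\mapsto\sum\conj{\mu_j}v_j$ rather than expecting $v_j'$ to already sit inside $\{v_i\}$. With that correction your argument goes through; the paper's reduction-to-the-real-case simply avoids having to track two bases at once.
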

\begin{proof}
Let $\set{h_1, \cdots, h_a, e_1, \cdots, e_b}$ be such a collection of $k+1$ hyperbolic eigendirections $h_j$ and elliptic eigendirections $e_j$ coming from projective transformations in $\pgl{k, \R}$. Individually scale each of the first $k$ eigendirections to produce a projective frame $\set{v_1, \cdots, v_k, v_{k+1}}$ with $\sum\limits_{j=1}^{k} v_j = v_{k+1}$ and take $\set{v_1, \cdots, v_k}$ to be the associated basis. 

Since they are real, the corresponding projective transformations preserve the projection of $\R^k \subseteq \C^k$, and by corollary \ref{uniqueCorollary}, the only projective $\R$ form the associated projective transformations might preserve is this the quotient of $\R^k \subseteq \C^k$. Further, we know that a real matrix has elliptic eigendirections in component-wise complex conjugate pairs. Hence, this projective $\R$ form can be represented as
    $$\set{p = \sum\limits_{j=1}^{k} \lambda_j v_j \mid \lambda_j \in \C \text{ not all 0 and } p \in \R^k} / \sim$$
where $\sim$ is projective equivalence. But $p = \sum\limits_{j=1}^{k} \lambda_j v_j \in \R^k$ if and only if
    $$p = \conj{p} = \sum\limits_{j=1}^{k} \conj{\lambda_j v_j} = \sum\limits_{j=1}^{k} \conj{\lambda_j} v_j'$$
Since homographies exist between any two projective frames on a given projective space and elements of the homography group, $\pgl{k,\C}$, are linear transformations, the result follows. 

\end{proof}

\subsection{\texorpdfstring{$\pgl{2, \C}$}{PGL(2, C)} example containing Projective Frame}
Elements in $\pgl{2, \C}$ are simultaneously conjugate into $\pgl{2, \R}$ if and only if they preserve a common projective $\R$ form. As described above, we first use $3$ eigendirections (and their elliptic eigendirection pairs, if not already included) to determine the only possible preserved projective $\R$ form. 

\begin{example}
Consider a collection of transformations with elliptic eigendirection pairs $\set{[1, 0], [0, 1]}$ and $\set{[1, 1], [-1, 1]}$. Denote the eigendirections by $v_1 = [1, 0], v_2 = [0, 1], v_3 = [1, 1]$, and $v_4 = [-1, 1]$. Taking the first three eigendirections, we have a projective frame $\set{[1, 0], [0, 1], [1, 1]}$ and associated basis $\set{[1, 0], [0, 1]}$. The images of these three eigendirections after a swapping elliptic eigendirection pairs gives a projective frame $\set{[0, 1], [1, 0], [-1, 1]}$ and associated basis $\set{[0, 1], [-1, 0]}$.

The only projective $\R$ form which might be preserved is
    $$\set{p = \sum\limits_{j=1}^{2} \lambda_j v_j \mid \lambda_j \in \C \text{ not all 0 and } p = \conj{p}} / \sim$$
If $p = \lambda_1 [1, 0] + \lambda_2 [0, 1] = [\lambda_1, \lambda_2]$, then $\conj{p} = \conj{\lambda_1} [0, 1] + \conj{\lambda_2} [-1, 0] = [-\conj{\lambda_2}, \conj{\lambda_1}]$. But for $p = \conj{p}$, this means $\lambda_1 = 0 = \lambda_2$, contradicting the assumption that not all $\lambda_j$ are $0$. Thus, no projective $\R$ forms are preserved by each of the corresponding transformations. 

This can be visualized geometrically by considering the projective $\R$ forms which are preserved by each projective transformation. Figure 1 below shows the action of projective transformations with representatives
\begin{align*}
    A &= \begin{bmatrix}
        2+i & 0 \\
        0 & 2-i
    \end{bmatrix} \\
    B &= \begin{bmatrix}
        1 & -i \\
        -i & 1 \\
    \end{bmatrix}
\end{align*}
on $\C P^1$. $A$ has eigendirections $[1, 0]$ and $[0, 1]$ with corresponding eigenvalues $2+i$ and $2-i$ respectively. $B$ has eigendirections $[1, 1]$ and $[-1, 1]$ with corresponding eigenvalues $1-i$ and $1+i$ respectively. 

Note that both $A$ and $B$ act as rotations of the Riemann sphere about the axis through their pair of eigendirections, thereby preserving projective $\R$ forms which consist of (possibly infinite radius) circles in $\C$ about which circle inversion swaps their pair of eigendirections. Since circle inversion preserves rays from the circle center, no common circle is preserved. The figure below shows the preserved projective $\R$ forms of $A$ in blue and $B$ in red, both drawn on $\C P^1$. 

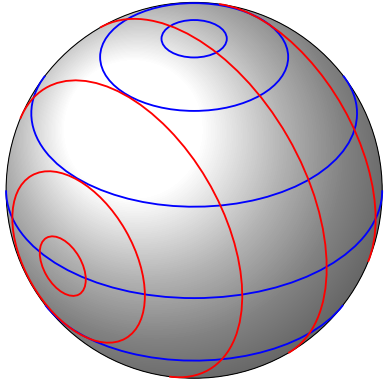
\begin{figure}
\begin{tikzpicture}[scale=2.5]

    \LatitudePlane[equator]{\angEl}{0}
    \LatitudePlane[cancer]{\angEl}{40} 
    \LatitudePlane[arctic]{\angEl}{80} 
    \LatitudePlane[capricorn]{\angEl}{-40} 

    \filldraw[ball color=white] (0, 0) circle (\Rad);
    \begin{scope}[draw=blue]
        \foreach \t in {-80, -60, -30, 0, 30, 60, 80} {\DrawLatitudeCircle[\Rad]{\t}}
    \end{scope}
    
    \begin{scope}[rotate=120, draw=red]
        \foreach \t in {-80, -60, -30, 0, 30, 60, 80} {\DrawLatitudeCircle[\Rad]{\t}}
    \end{scope}
\end{tikzpicture}
\caption{Preserved $\R$ forms that do not coincide}
\end{figure}
\end{example}

\begin{example}
Consider the subgroup of projective transformations generated by 
$$\begin{tabular}{lcc}
    Matrix representative & eigenvalues & eigendirections \\[0.5cm]
    {$a = \begin{bmatrix} 3i-1 & 3i-3 \\ -3i-3 & -3i-1 \end{bmatrix}$} & {$\begin{matrix} 1 \\ -2 \end{matrix}$}& {$\begin{matrix} [-1, 1] \\ [-i, 1] \end{matrix}$} \\[0.5cm]
    {$b = \begin{bmatrix} 3i+1 & -3i-3 \\ 3i-3 & -3i+1 \end{bmatrix}$} & {$\begin{matrix} 2 \\ -1 \end{matrix}$}& {$\begin{matrix} [-i, 1] \\ [1, 1] \end{matrix}$} \\[0.5cm]
    {$c = \begin{bmatrix} 1+i & 0 \\ 0 & 1-i \end{bmatrix}$} & {$\begin{matrix} 1+i \\ 1-i \end{matrix}$}& {$\begin{matrix} [1, 0] \\ [0, 1] \end{matrix}$} \\[0.5cm]
    {$d = \begin{bmatrix} -2+5i & -3 \\ -3 & -2-5i \end{bmatrix}$} & {$\begin{matrix} -1-2i \\ -1+2i \end{matrix}$}& {$\begin{matrix} [-i, 3] \\ [-3i, 1] \end{matrix}$} \\[0.5cm]
\end{tabular}$$

Based on the eigenvalues, projective transformations $a$ and $b$ are hyperbolic while $c$ and $d$ are elliptic. We demonstrate how to determine the projective $\R$ form which may be preserved starting with three hyperbolic eigendirections and then three elliptic eigendirections. The mixed case follows the same style on each respective type of eigendirection. 

\textit{Scenario 1.} If we were to start with three hyperbolic eigendirections, say $[-1, 1], [-i, 1], [1, 1]$, we would know that any projective $\R$ form preserved by the collection of projective transformations must pass through these three points. These three points are then used to construct a projective frame. Solving the equation
    $$\lambda_1 [-1, 1] + \lambda_2 [-i, 1] = [1, 1]$$
gives $\lambda_1 = -i$ and $\lambda_2 = 1+i$. We thus take the projective frame to be 
\begin{align*}
    F &= \set{\lambda_1 [-1, 1], \lambda_2 [-i, 1], [1, 1]} \\
    &= \set{[i, -i], [1-i, 1+i], [1, 1]}
\end{align*}
and the associated basis as $\set{[i, -i], [1-i, 1+i]}$. 

Then the only copy projective $\R$ form that might be preserved by the collection is the set of points
\begin{align*}
    \set{p = c_1 [i, -i] + c_2 [1-i, 1+i] \mid c_j \in \C \text{ not all 0 such that } p = \conj{p}} / \sim
\end{align*}
where the condition of $p = \conj{p}$ is complex conjugacy with respect to the $\R$-form and $\sim$ is complex projective equivalence. That is, with $p$ as above, $\conj{p} = \conj{c_1} [i, -i] + \conj{c_2} [1-i, 1+i]$ since the three hyperbolic eigendirections are preserved by conjugation about the $\R$-form. The condition of $p = \conj{p}$ is equivalent to
    $$[i c_1 + (1 - i) c_2, -i c_1 + (1 + i) c_2] = [i \conj{c_1} + (1 - i) \conj{c_2}, -i \conj{c_1} + (1 + i) \conj{c_2}]$$
which implies $\im(c_1) = \im(c_2)$. We can simplify the expression of $\R P^1$ as the set of points
    $$\set{p = [c_2 + (c_1 - c_2) i, c_2 + (c_2 - c_1) i] \mid c_j \in \C \text{ not all 0 such that } p = \conj{p}} / \sim$$
Then since $p = \conj{p} \Longrightarrow \im(c_1) = \im(c_2)$, the two coordinates have the same magnitude, so we have that the only projective $\R$ form that might be preserved by our collection is 
    $$\set{p = [z, 1] \mid z \in S^1}$$
    
\textit{Scenario 2.} If we were to start with three elliptic eigendirections, in this case $[1, 0], [0, 1], [-i, 3]$, we would know that the conjugation corresponding to the $\R$ form preserved by these projective transformations must map $[1, 0] \mapsto [0, 1], [0, 1] \mapsto [1, 0], [-i, 3] \mapsto [-3i, 1]$. These three points are then used to construct a projective frame. Solving the equation
    $$\lambda_1 [1, 0] + \lambda_2 [0, 1] = [-i, 3]$$
gives $\lambda_1 = -i$ and $\lambda_2 = 3$. We thus take the projective frame to be
\begin{align*}
    F &= \set{\lambda_1 [1, 0], \lambda_2 [0, 1], [-i, 3]} \\
    &= \set{[-i, 0], [0, 3], [-i, 3]}
\end{align*}
and the associated basis as $\set{[-i, 0], [0, 3]}$.

Then the only projective $\R$ form that might be preserved by the collection is the set of points
    $$p = \set{c_1 [-i, 0] + c_2 [0, 3] \mid c_j \in \C \text{ not all 0 such that } p = \conj{p}} / \sim$$
where the condition of $p = \conj{p}$ is complex conjugacy with respect to the $\R$-form and $\sim$ is complex projective equivalence. 

A projective frame for the images $\set{[0, 1], [1, 0], [-3i, 1]}$ of these three elliptic eigendirections is $\set{[0, 1], [-3i, 0], [-3i, 1]}$ with associated basis $\set{[0, 1], [-3i, 0]}$ so that with $p$ as above, $\conj{p} = \conj{c_1} [0, 1] + \conj{c_2} [-3i, 0]$. The condition of $p = \conj{p}$ is then
    $$[-i (c_1), 3 (c_2)] = [-3i (\conj{c_2}), \conj{c_1}]$$
which implies $c_1 = 3 \conj{c_2}$. Then the preserved projective $\R$ form is the set
\begin{align*}
    \set{p = [-i (c_1), 3 (c_2)] \mid c_j \in \C \text{ not all 0 such that } p = \conj{p}} / \sim \\
    = \set{p = \left[ c_1, \conj{c_1} \right] \mid c_1 \in \C \bs \set{0} } / \sim
\end{align*}
so the two coordinates of points $p$ in the projective $\R$ form have the same magnitude, which allow us to simplify the expression of the only projective $\R$ form which might be preserved by our collection as the set of points
    $$\set{p = [z, 1] \mid z \in S^1}$$
which coincides with the calculations when using hyperbolic eigendirections. 
\end{example}

\subsection{Set of Eigendirections not containing a Projective Frame}
For a collection of transformations whose set of eigendirections does not contain a projective frame, we solve a similar set of equations. By assumption, each element $\pgl{k, \R}$ compatible generic eigenvalues in $\pgl{k, \C}$, so has a unique set of eigendirections, up to individually scaling by $\C \bs \set{0}$, which form an eigenbasis. 

Then a basis associated to a projective $\R$ form which is preserved by the collection of transformations is some set $\set{v_1, \cdots, v_k}$ such that for every hyperbolic eigendirection $p$ we have
    $$p = \sum\limits_{j=1}^k \lambda_j v_j \text{ for } \lambda_j \in \R \text{ not all 0}$$
and for elliptic eigendirection pair $p, p'$ we have
    $$p = \sum\limits_{j=1}^k \lambda_j v_j \text{ for } \lambda_j \in \C \text{ not all 0}$$
with
    $$\conj{p} = \sum\limits_{i=1}^k \conj{\lambda_j v_j} = \sum\limits_{i=1}^k \conj{\lambda_j} v_j' = p'$$
where
    $$v_j' = 
    \begin{cases}
        v_j & \text{if hyperbolic} \\
        \text{its eigendirection pair} & \text{if elliptic}
    \end{cases}$$
\begin{example}
Consider a collection of elements in $\pgl{3, \C}$ whose set of eigendirections contains elliptic eigendirections $[-i, 1, 0], [i, 1, 0], [1+i, 1, 0], [1-i, 1, 0]$ and hyperbolic eigendirection $[0, 0, 1]$. By observation, this collection of eigendirections is compatible with the projective frame $\set{[1, 0, 0], [0, 1, 0], [0, 0, z], [1, 1, z]}$ and associated basis $\set{[1, 0, 0], [0, 1, 0], [0, 0, z]}$ for every $z \in \C$. Further, it is clear that that for each $z \in S^1 / S^0$ we obtain a distinct projective frame and therefore a different projective $\R$ form which is preserved.
\end{example}

\begin{example}
If we append the set of eigendirections from the previous example with the hyperbolic eigendirection $[0, 1, 1]$, then we are no longer able to scale the basis vector $[0, 0, 1]$ to become $[0, 0, z]$ for $z \notin \R$, as then the real span of the associated basis will not contain $[0, 1, 1]$. Hence, the associated basis must be projectively equivalent to $\set{[1, 0, 0], [0, 1, 0], [0, 0, 1]}$, so there is a unique projective $\R$ form preserved. 
\end{example}

\begin{example}
If we again append the set of eigendirections from the previous examples with an elliptic eigendirection pair $[1, 0, 1], [2, 0, 2]$, then since the only projective frame compatible with the previous eigendirections does not correspond to a conjugation which maps $[1, 0, 1] \leftrightarrow [2, 0, 2]$, we have that the projective $\R$ form is not preserved by the transformation corresponding to this elliptic eigendirection pair. Therefore, there are no projective $\R$ forms preserved by every transformation in the collection and hence the collection is not simultaneously conjugate into $\pgl{3, \R}$. 
\end{example}

\subsection{Determining Conjugacy into \texorpdfstring{$\pgl{, \R}$}{PGL(R)}}
A collection of elements in $\pgl{k, \C}$ is simultaneously conjugate into $\pgl{k, \R}$ if and only if they preserve a common projective $\R$ form. The collection of projective transformations preserves a common projective $\R$ form if and only if every element in the collection has eigenvalues which are compatible with being conjugate into $\pgl{k, \R}$ and for each element there exists an eigenbasis such that there exists a lift of all of the eigendirections from these eigenbasis that respects some conjugation coming from a common $\R$ form in $\C^k$. Therefore, we can determine if a collection of projective transformations is simultaneously conjugate into $\pgl{k, \R}$ by examining just the eigendirections and eigenvalues. In the same manner that we determined the only projective $\R$ form which might be simultaneously preserved, we can determine if that projective $\R$ form is in fact preserved by every transformation. 

In the coming chapters we detail how to use the so called Fock-Goncharov coordinates, namely cross ratios and triple ratios, to determine if a collection of elements in $\pgl{k, \C}$ is simultaneously conjugate into $\pgl{k, \R}$. 

\section{Introduction to Fock-Goncharov Coordinates}

\subsection{Cross Ratios in \texorpdfstring{$\C P^1$}{CP1}}
It is well known that M\"obius transformations, which form the group $\pgl{2, \C}$, act transitively on ordered triples of points in $\C P^1$ or equivalently points in $\hat{\C} := \C \cup \set{\infty}$. Given a quadruple $(A, B, C, D)$ of points each in $\C P^1$, we can use that transitivity to find the (unique) M\"obius transformation $\Gamma$ mapping $A \mapsto \infty$, $C \mapsto 0$, and $D \mapsto 1$\label{crossRatio}. We then assign to the ordered quadruple the image of $B$ after acting by $\Gamma$, and define the cross ratio to be that value. We denote a cross ratio by $[A, B, C, D]$. It is easy to compute that the cross ratio is equal to 
    $$[A, B, C, D] = \frac{(A-D)(C-B)}{(A-B)(C-D)}$$
    
This is a non-standard normalization, though several different normalizations are used by different authors. We choose this normalization so that the sign of a real cross ratio $[A, B, C, D]$ can be used to determine if $B$ and $D$ separate $A$ from $C$ along the common circle and so that in higher dimensions the quotient spaces are more convenient. A more standard normalization is that used by Fock and Goncharov \cite{Fock:2006} with $[[\infty, -1, 0, x]] = x$ defined by
    $$[[A, B, C, D]] = \frac{(A-B)(C-D)}{(A-D)(B-C)}$$
Our cross ratio can be seen as
\begin{align*}
    [A, B, C, D] &= -1/[[A, D, C, B]]
\end{align*}

For brevity, if we have transformations $G$ and $H$ with eigendirections $g_1, g_2$ and $h_1, h_2$ respectively, we introduce the notation $[G, H] := [g_1, h_1, g_2, h_2]$. 

Given two triangles in $\hat{\C}$ sharing a side, we can consider taking the cross ratio of their vertices. This value will tell us how the triangles lie next to each other on the surface and we will point out two results here. The reason for normalizing to $-1$ rather than $+1$ is so that positivity is associated to the triangles being disjoint. For our purposes, this will not be strictly necessary and it is more convenient to normalize to $+1$. 

\begin{restatable}{proposition}{realCR}
The cross ratio $[A, B, C, D]$ is real if and only if the four points $A, B, C, D$ lie on a circle or line in $\hat{C}$ and a real cross ratio will be positive precisely when the points $B$ and $C$ do not separate $A$ and $D$ along that circle or line.
\end{restatable}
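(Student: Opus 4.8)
The plan is to reduce everything to one normalized configuration via Möbius invariance. First I would observe that the cross ratio is itself a Möbius invariant, straight from its definition: a Möbius transformation fixing $\infty$, $0$, $1$ is the identity, so the normalizing map $\Gamma$ with $A \mapsto \infty$, $C \mapsto 0$, $D \mapsto 1$ is unique, and for any $g \in \pgl{2,\C}$ the map $\Gamma \circ g^{-1}$ is the unique normalizing map for $(g(A), g(C), g(D))$; hence $[g(A), g(B), g(C), g(D)] = (\Gamma \circ g^{-1})(g(B)) = \Gamma(B) = [A,B,C,D]$. The property ``$A, B, C, D$ lie on a common circle or line in $\hat{\C}$'' is likewise Möbius invariant, since Möbius maps carry circles-or-lines to circles-or-lines; and so is the \emph{separation type} of four points on a common circle or line, since a Möbius map restricts to a homeomorphism between the two generalized circles and therefore preserves, up to an overall reversal of cyclic order, which of the three partitions of $\{A,B,C,D\}$ into two pairs is the linking one. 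So it suffices to prove both assertions after replacing $(A,B,C,D)$ by $(\Gamma(A), \Gamma(B), \Gamma(C), \Gamma(D)) = (\infty, r, 0, 1)$, where $r := [A,B,C,D] = \Gamma(B)$.

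The reality half is then immediate. The unique circle-or-line through $\infty$, $0$, $1$ is $\hat{\R} = \R \cup \{\infty\}$, so the four points $\infty, r, 0, 1$ lie on a common circle or line exactly when $r \in \hat{\R}$; discarding the degenerate coincidence $B = A$ (for which the cross ratio is undefined), this says $r \in \R$, that is, $[A,B,C,D]$ is real. Pulling back along $\Gamma$ gives the claimed equivalence for the original points.

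For the sign statement I would work entirely in the normalized model, where $A,B,C,D$ have become $\infty, r, 0, 1$ on $\hat{\R}$. Split into the cases $r < 0$, $0 < r < 1$, and $r > 1$ (the boundary values $r \in \{0,1,\infty\}$ being the degenerate cases $B \in \{C, D, A\}$). In each case one writes the cyclic order of $\{\infty, r, 0, 1\}$ around $\hat{\R}$, reads off which pair separates the other along the circle, and compares with the sign of $r$; this exhibits $r > 0$ in exactly the non-separating cases named in the statement and $r < 0$ in exactly the separating one. Since both the sign of the cross ratio and the separation type are Möbius invariant, the conclusion transports back to the four original points on their common circle or line.

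The only computation is this final three-case check, and \emph{the main obstacle is the combinatorial bookkeeping}: fixing an orientation of $\hat{\R}$, placing $r$ correctly relative to $0$ and $1$, and matching the resulting cyclic order to the correct separating pair. For that reason it is worth isolating, as a small lemma used in the reduction above, the statement that a homeomorphism between two circles preserves the linking structure of four-point configurations, so that the possible reversal of cyclic order is harmless.
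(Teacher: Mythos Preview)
Your approach is essentially the paper's: reduce by the normalizing M\"obius map $\Gamma$ to the configuration $(\infty, x, 0, 1)$, invoke that M\"obius maps send generalized circles to generalized circles, and read off both claims from the normalized picture. The paper's proof is terser---it skips your careful justification of invariance of the cross ratio and of the separation type, and simply computes $[\infty, x, 0, 1] = x$---but the underlying argument is identical.

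One caution: you defer the three-case check and simply assert it ``exhibits $r>0$ in exactly the non-separating cases named in the statement.'' If you actually carry it out with the pairs as literally written in the proposition ($\{B,C\}$ versus $\{A,D\}$), it does \emph{not} match: for instance $r=2>0$ gives $\{B,C\}=\{2,0\}$ separating $\{A,D\}=\{\infty,1\}$ on $\hat{\R}$, while $r=-1<0$ gives no separation. The pairing that does match the sign of $r$ is $\{B,D\}$ versus $\{A,C\}$, which is what the paper's own introduction (``if $B$ and $D$ separate $A$ from $C$'') and its accompanying figures actually depict. So your method is correct, but the combinatorial bookkeeping you flagged as the main obstacle really is one: carrying it out uncovers a labeling slip in the statement itself rather than confirming it as written.
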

\begin{proof}
The first half follows from the fact that M\"obius transformations map (possibly infinite radius) circles to (possibly infinite radius) circles, and in particular the circle through $A, B$, and $C$ will be mapped to the extended real line by the M\"obius transformation defining the cross ratio. The second half follows from simplification of the cross ratio
    $$[\infty, x, 0, 1] = \frac{(\infty + 1)(0 - x)}{(\infty - x)(0 - 1)} = x$$

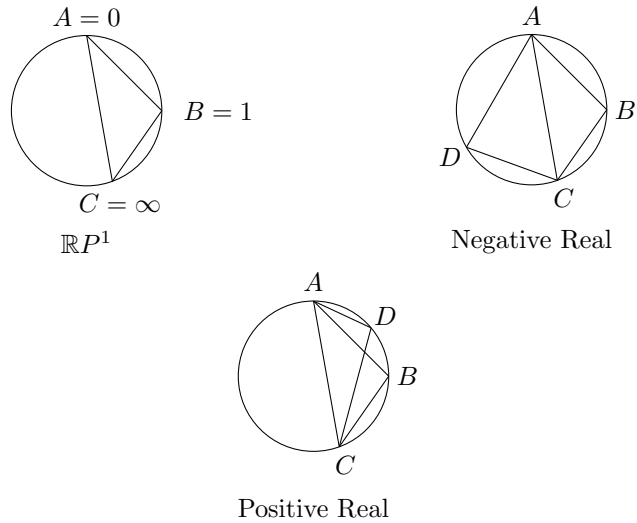
\begin{figure}[H]
    \begin{tikzpicture}[scale=0.5]
        \draw (0, 0) circle (2);
        \draw (90:2) -- (0:2);
        \draw (0:2) -- (-70:2);
        \draw (90:2) -- (-70:2);
        \node at (90:2.5) {$A=0$};
        \node at (0:3.5) {$B=1$};
        \node at (-70:2.6) {$C=\infty$};
        \node at (270:3.5) {$\R P^1$};
        \hspace{2cm}
    \end{tikzpicture}
    \hspace{2cm}
    \begin{tikzpicture}[scale=0.5]
        \draw (0, 0) circle (2);
        \draw (90:2) -- (0:2);
        \draw (0:2) -- (-70:2);
        \draw (-70:2) -- (210:2);
        \draw (210:2) -- (90:2);
        \draw (90:2) -- (-70:2);
        \node at (90:2.5) {$A$};
        \node at (0:2.5) {$B$};
        \node at (-70:2.5) {$C$};
        \node at (210:2.5) {$D$};
        \node at (270:3.5) {Negative Real};
    \end{tikzpicture}
    \hspace{2cm}
    \begin{tikzpicture}[scale=0.5]
        \draw (0, 0) circle (2);
        \draw (90:2) -- (0:2);
        \draw (0:2) -- (-70:2);
        \draw (-70:2) -- (40:2);
        \draw (40:2) -- (90:2);
        \draw (90:2) -- (-70:2);
        \node at (90:2.5) {$A$};
        \node at (0:2.5) {$B$};
        \node at (-70:2.5) {$C$};
        \node at (40:2.5) {$D$};
        \node at (270:3.5) {Positive Real};
    \end{tikzpicture}
\caption{Geometric interpretations of real cross ratios}
\end{figure}
\end{proof}

\begin{restatable}{proposition}{modOneCR}
The cross ratio $[A, B, C, D]$ is of modulus 1 if and only if $B$ and $C$ are swapped by circle inversion about some (possibly infinite radius) circle through $A$ and $D$. 
\end{restatable}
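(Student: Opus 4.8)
The plan is to reduce to a normalized configuration using the M\"obius invariance of the cross ratio together with two standard facts: a M\"obius transformation carries circles and lines to circles and lines, and it conjugates a circle inversion to the inversion in the image circle. Since the proposition involves only the modulus of $[A,B,C,D]$ and the existence of an inversion with a prescribed swap, both sides are unchanged when all four points are precomposed with a common M\"obius map. First I would choose the M\"obius transformation $\Phi$ that sends the two points to be interchanged, $B$ and $C$, to $0$ and $\infty$ respectively, and record the images $a := \Phi(A)$ and $d := \Phi(D)$; the cross ratio is then read off from this normalized frame in the same way that $[\infty, x, 0, 1] = x$ was read off in the proof of the real case.

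For the reverse implication I would start from a circle $\mathcal{C}$ through $A$ and $D$ whose inversion swaps $B$ and $C$. Its image $\Phi(\mathcal{C})$ is a circle or line through $a$ and $d$, and the conjugated inversion swaps $\Phi(B) = 0$ and $\Phi(C) = \infty$. The key observation is that an anticonformal involution interchanges $0$ and $\infty$ precisely when it is the inversion in a circle centered at the origin: inversion in a line is a reflection, which fixes $\infty$ and so cannot perform the swap, whereas inversion in a circle sends its center to $\infty$, forcing the center to be $0$. Thus $\Phi(\mathcal{C})$ is a circle centered at the origin passing through both $a$ and $d$, so $\abs{a} = \abs{d}$, and reading the cross ratio off in the normalized frame is designed to yield $\abs{[A,B,C,D]} = 1$.

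For the forward implication I would run this in reverse: assuming $\abs{[A,B,C,D]} = 1$, the normalized computation gives $\abs{a} = \abs{d}$, so $a$ and $d$ lie on a common circle centered at the origin. Inversion in that circle swaps $0$ and $\infty$, that is $\Phi(B)$ and $\Phi(C)$, while fixing the origin-centered circle through $a = \Phi(A)$ and $d = \Phi(D)$ setwise. Pulling back by $\Phi^{-1}$, which again sends circles to circles and inversions to inversions, produces a (possibly infinite-radius) circle through $A$ and $D$ whose inversion interchanges $B$ and $C$, as required. The degenerate possibilities $a = 0$ or $d = \infty$ are excluded because the four points are distinct, and the infinite-radius alternative on the $(A,D)$ side is exactly what appears when $\Phi^{-1}$ carries the origin-centered circle to a line.

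The hard part will be the bookkeeping in the normalized frame. I must verify carefully that conjugating an inversion by $\Phi$ yields exactly the inversion in $\Phi(\mathcal{C})$, with no spurious extra isometry, so that the interchange of $0$ and $\infty$ really pins $\Phi(\mathcal{C})$ to an origin-centered circle rather than merely a circle through the origin. I then need to confirm that, read off in this frame where $B$ and $C$ go to $0$ and $\infty$, the geometric condition $\abs{a} = \abs{d}$ is equivalent to $\abs{[A,B,C,D]} = 1$; isolating this equivalence, and treating the infinite-radius (line) case on equal footing with the finite-radius case, are the steps demanding the most care.
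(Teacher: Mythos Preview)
Your normalization sends $B \mapsto 0$ and $C \mapsto \infty$, after which the cross ratio reads
\[
[a,\,0,\,\infty,\,d] \;=\; \frac{(a-d)(\infty - 0)}{(a - 0)(\infty - d)} \;=\; 1 - \frac{d}{a}.
\]
The condition $\abs{a} = \abs{d}$ that you correctly extract from the swap of $0$ and $\infty$ is \emph{not} equivalent to $\abs{1 - d/a} = 1$: with $a = 1$, $d = -1$ one has $\abs{a} = \abs{d}$ but cross ratio $2$. So the step you flag as ``the hard part'' is not mere bookkeeping; it genuinely fails, and no choice of inversion bookkeeping will repair it.

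The underlying reason is that the proposition as printed has the two pairs transposed. The paper's own proof does not establish the statement as written but rather the version in which $B$ and $D$ are swapped by inversion in a circle through $A$ and $C$; this is also the version actually used downstream (e.g.\ in the hyperbolic--elliptic lemma, where the circle passes through the hyperbolic fixed points $h^-, h^+$ occupying the $A$ and $C$ slots and the inversion swaps the elliptic pair $e^-, e^+$ in the $B$ and $D$ slots). The paper normalizes $A = \infty$, $C = 0$, so that circles through $A$ and $C$ become lines through the origin; then $[A,B,C,D] = B/D$, which has modulus $1$ precisely when $\abs{B} = \abs{D}$, i.e.\ when some reflection in a line through the origin interchanges $B$ and $D$. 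If you rerun your argument with this corrected pairing---sending $A$ and $C$, rather than $B$ and $C$, to $\infty$ and $0$---everything you wrote goes through and coincides with the paper's proof essentially verbatim.
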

\begin{proof}
Without loss of generality, suppose that $A = \infty$ and $C = 0$. Then $B$ and $D$ will be swapped by inversion about a circle through $A$ and $C$, equivalently a line through the origin, if and only if $\abs{B} = \abs{D}$. After simplifying
    $$[A, B, C, D] = \frac{B}{D} \in S^1$$
so the result follows. 

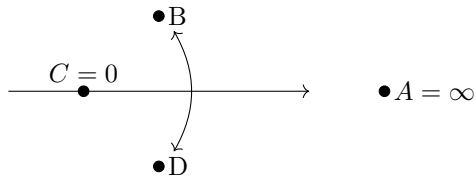
\begin{figure}[H]
    \begin{tikzpicture}
        \draw[->] (-1, 0) -- (3, 0);
        \draw[fill=black] (0, 0) circle (2pt);
        \draw[fill=black] (4, 0) circle (2pt);
        \draw[fill=black] (1, 1) circle (2pt);
        \draw[fill=black] (1, -1) circle (2pt);
        \node[above] at (0,0) {$C = 0$};
        \node[right] at (4, 0) {$A = \infty$};
        \node[right] at (1, 1) {B};
        \node[right] at (1, -1) {D};
        \draw[<->, bend right = 30] (1.2, -0.8) to (1.2, 0.8);
    \end{tikzpicture}
\caption{Geometric interpretations of modulus 1 cross ratios}
\end{figure}
\end{proof}


\subsection*{Cross Ratios in Higher Dimensions}
In spaces of complex dimension greater than 1, we are not able to take cross ratios directly since a collection of four points may not lie on a single complex line. In higher dimensions we take cross ratios coming from a quadruple of flags rather than a quadruple of points. A flag is a sequence of nested subspaces $\set{\vec{0}} \subsetneq F_1 \subsetneq F_2 \subsetneq \cdots \subsetneq F_k = \C^k$ where $\dim F_j = j$. For a flag $F$, let $F_j$ denote the subspace of dimension $j$. Given a transformation $G$ with eigendirections $g_1, g_2, \cdots, g_k$, we construct flags 
\begin{align*}
    F &= \set{\vec{0}} \subsetneq \spn{g_1} \subsetneq \spn{g_1} \oplus \spn{g_2} \subsetneq \cdots \subsetneq \spn{g_1} \oplus \spn{g_2} \oplus \cdots \oplus \spn{g_k} \\
    F' &= \set{\vec{0}} \subsetneq \spn{g_k} \subsetneq \spn{g_k} \oplus \spn{g_{k-1}} \subsetneq \cdots \subsetneq \spn{g_k} \oplus \spn{g_{k-1}} \oplus \cdots \oplus \spn{g_1}
\end{align*}
A flag $F$ is associated with some ordered basis $\set{v_1, \cdots, v_k}$ if for every $j$ we have $F_j = \text{Span } \set{v_1, \cdots, v_j}$. We say that $F$ and $F'$ above form a flag pair since they are associated to the reverse ordered basis of each other. 

A pair of flags $\set{A, B}$ in $\C^k$ is in generic position if $A_j \cap B_{k-j} = \set{\vec{0}}$ for every $j$. A set of flags $\set{A, B, \cdots, Z}$ in $\C^k$ is said to be in generic position if $A_{i_A} \oplus B_{i_B} \oplus \cdots Z_{i_Z} = \C^k$ for every $i_A + i_B + \cdots i_Z = k$. Note that generic position then implies $A_{i_A} \cap \left( B_{i_B} \oplus \cdots \oplus Z_{i_Z} \right) = \set{\vec{0}}$ and similar relations for every $i_A + i_B + \cdots i_Z = k$ and each flag.

Given two diagonalizable transformations $G$ and $H$ in $\pgl{k, \C}$, each without repeated eigenvalues, construct flags $A$ and $C$ from an eigenbasis for $G$ and $B$ and $D$ from and eigenbasis for $H$. If $A, B, C, D$ are in generic position, then we can take the cross ratios of projections coming from the quadruple of flags. For each $i+j = k-2$ with $i, j \geq 0$, let
\begin{align*}
    A^{i,j} &= \left(A_{i+1} \oplus C_j\right) / (A_i \oplus C_j) \\
    B^{i,j} &= \left(B_{1} \oplus (A_i \oplus C_j)\right) / (A_i \oplus C_j) \\
    C^{i,j} &= \left(C_{j+1} \oplus A_i\right) / (A_i \oplus C_j) \\
    D^{i,j} &= \left(D_1 \oplus (A_i \oplus C_j)\right) / (A_i \oplus C_j)
\end{align*}
and let $[A, B, C, D]$ denote the set of $k-1$ cross ratios $[A^{i,j}, B^{i,j}, C^{i,j}, D^{i,j}]$. These cross ratios are defined because the four points $A^{i,j}, B^{i,j}, C^{i,j}, D^{i,j}$ are contained in a single complex line. 

We will describe in more detail how to set up the flags coming from the eigenbases and how to take the cross ratios in the coming sections. 

\subsection{Cross Ratios Preserving Common Projective \texorpdfstring{$\R$}{R}-Form}
We know that hyperbolic eigendirections lie on the preserved projective $\R$ form and elliptic eigendirections pairs are swapped by inversion about the preserved projective $\R$ form. Conjugation action on a projective frame determines a unique $\R$-form up to collectively rotating the basis by $S^1 / S^0$, and therefore a unique projective $\R$ form preserved by the associated transformations. Thus, given flag pairs $A, C$ and $B, D$ with $A, B, C, D$ in generic position, we can order the eigenbases for the transformations so that we need at most the flag pair $A, C$ and the one dimensional parts of $B, D$ to determine a unique projective $\R$ form which might be preserved by the associated transformations. 

\begin{restatable}{lemma}{fgHH}
Given hyperbolic transformations with flag pairs $A, C$ and $B, D$, with $A, B, C, D$ in generic position, if the transformations preserve a common projective $\R$ form then every cross ratio coming from $[A, B, C, D]$ is real. 
\end{restatable}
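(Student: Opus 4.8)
The plan is to reduce, at no cost, to the case where the common projective $\R$ form is the standard one coming from $\R^k \subseteq \C^k$; once all the relevant eigendirections are ``real points'' the flags become real flags, the four points whose cross ratio we take lie on a common real projective line, and reality of the cross ratio is immediate. So the argument has two parts: a conjugation-invariance step that licenses the reduction, and then a short ``realness'' computation.

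First I would check that the cross ratios in $[A,B,C,D]$ are unchanged if $G$ and $H$ are conjugated by a common $\Gamma \in \pgl{k, \C}$. Such a $\Gamma$ sends an eigenvector of $G$ with eigenvalue $\lambda$ to an eigenvector of $\Gamma G \Gamma^{-1}$ with the same eigenvalue, so it carries the flag pair $A, C$ built from an eigenbasis of $G$ to $\Gamma A, \Gamma C$ and likewise $B, D$ to $\Gamma B, \Gamma D$, with matching orderings, and it preserves generic position. Since $\Gamma$ commutes with forming direct sums and quotients, it induces projective-linear isomorphisms between the copies of $\C P^1$ appearing in the two versions of the construction, and projective-linear maps preserve cross ratios. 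This is routine but must be recorded so the reduction is legitimate. Next, recall that a projective $\R$ form $P$ is the projectivization of an $\R$ form $R$, that $\pgl{k,\C}$ acts transitively on $\R$ forms, and — as noted at the start of this subsection (and as follows from Theorem~\ref{conjugationTheorem} via Corollary~\ref{conjIFFrForm}) — that the hyperbolic eigendirections of a transformation preserving $P$ all lie on $P$. Hence if $G$ and $H$ preserve a common projective $\R$ form $P=\mathbb{P}(R)$, conjugating by the change of basis taking $R$ to $\R^k$ changes none of the cross ratios in $[A,B,C,D]$ and lets me assume every eigendirection $g_1,\dots,g_k$ of $G$ and $h_1,\dots,h_k$ of $H$ has a representative in $\R^k$.

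With real representatives for all eigendirections, each of $A_j, B_j, C_j, D_j$ is the complexification of a real subspace. For $i+j = k-2$ the sum $A_i \oplus C_j$ is direct (it is spanned by disjoint blocks of a single eigenbasis) and is again real, as is $A_{i+1}\oplus C_{j+1} = \C^k$, while $B_1 \cap (A_i \oplus C_j) = D_1 \cap (A_i \oplus C_j) = 0$ by generic position. So the $2$-dimensional quotient $\C^k / (A_i \oplus C_j)$ carries a real structure; taking as a basis for it the images of the real spanning vectors of $A_{i+1}$ and $C_{j+1}$, the images of the real spanning vectors of $B_1$ and $D_1$ have real coordinates. Thus $A^{i,j}, B^{i,j}, C^{i,j}, D^{i,j}$ correspond, under an affine chart on that $\C P^1$, to four points of $\hat{\C}$ lying on a common circle or line, so by the earlier proposition characterizing real cross ratios — equivalently by the explicit formula $\tfrac{(A-D)(C-B)}{(A-B)(C-D)}$ evaluated at those real coordinates — the cross ratio $[A^{i,j}, B^{i,j}, C^{i,j}, D^{i,j}]$ is real. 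Running this over all $i+j = k-2$ gives the lemma.

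The main obstacle is really only the bookkeeping in the invariance step — confirming that conjugating the transformations conjugates the flags with matching orderings and that the induced maps on the quotient lines are projective-linear — together with pinning down exactly where generic position is used: to keep $B^{i,j}$ and $D^{i,j}$ from degenerating and to make the quotient genuinely two-dimensional, so that the cross ratios are defined in the first place. Once that scaffolding is in place, the geometric heart of the matter — real flags yield real points yield real cross ratios — is immediate.
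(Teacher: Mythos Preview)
Your proposal is correct and follows essentially the same approach as the paper: conjugate so that the common preserved projective $\R$ form becomes the standard one coming from $\R^k \subseteq \C^k$, use that hyperbolic eigendirections lie on it, and conclude that the cross ratios are real. The paper carries this out a bit more explicitly---normalizing $A,C$ to the standard-basis flag pair and $D_1$ to $\spn{e_1+\cdots+e_k}$ so that the cross ratios become the ratios of consecutive coordinates of $B_1$---whereas you argue abstractly that real flags yield real quotients yield real cross ratios; but the content is the same.
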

\begin{proof}
Normalize so that $A, C$ are flags coming from the standard basis
\begin{align*}
    A &= \spn{e_1} \subseteq \spn{e_1} \oplus \spn{e_2} \subseteq \cdots \subseteq \spn{e_1} \oplus \cdots \spn{e_k} \\
    C &= \spn{e_k} \subseteq \spn{e_k} \oplus \spn{e_{k-1}} \subseteq \cdots \subseteq \spn{e_k} \oplus \cdots \spn{e_1}
\end{align*}
Further, normalize so that $D_1 = \spn{e_1 + e_2 + \cdots + e_k}$ is the sum of standard basis vectors. These $k+1$ eigendirections determine a unique candidate projective $\R$ form through the collection of eigendirections, that coming from $\R^k \subseteq \C^k$ in this basis. Since a preserved projective $\R$ form must contain all hyperbolic eigendirections, to preserve a common projective $\R$ form we must have that $B_1$ also lives in the projective $\R$ form through the $k+1$ eigendirections. The cross ratios coming from $[A, B, C, D]$ with this normalization are the ratios of consecutive coordinates of $B_1$, with none being $0$ by genericity. Thus, $B_1$ lives in the same projective $\R$ form as the eigendirections giving rise to $A, C,$ and $D_1$ if and only if these cross ratios are all real and hence the projective transformations cannot preserve a common projective $\R$ form if not all of $[A, B, C, D]$ are real. 

\end{proof}

\subsection{Triple Ratios in \texorpdfstring{$\C P^2$}{CP2}}
Triple ratios are analogous to cross ratios in one higher dimension, given a set of three flags $A, B, C$ in $\C P^2$. If $v_a, v_b, v_c$ are direction vectors for the lines $A_1, B_1, C_1$ and $f_a, f_b, f_c \in (\mathbb{C})^*$ are linear forms defining the planes $A_2, B_2, C_2$ then the triple ratio if $A, B, C$ is defined as
    $$r_3(A, B, C) = \frac{f_a (v_b) \; f_b (v_c) \; f_c (v_a)}{f_a (v_c) \; f_b (v_a) \; f_c (v_b)}$$

Together with cross ratios, the triple ratios are also used to determine if the corresponding transformations are simultaneously conjugate into $\pgl{3, \R}$.

\begin{figure}[H]
\begin{tikzpicture}[scale=1]
    \draw[dashed] (-3, -2.3) -- (0.2, 3.3);
    \draw[dashed] (-3.2, -2) -- (3.4, -2);
    \draw[dashed] (-0.2, 3.3) -- (3, -2.3);
    \draw (-1.3, 0.6) -- (1.3, 0.6) -- (0, -2) -- (-1.3, 0.6);
    \draw[fill=black] (-1.3, 0.6) circle (2pt);
    \draw[fill=black] (1.3, 0.6) circle (2pt);
    \draw[fill=black] (0, -2) circle (2pt);
    \node[right] at (1.3, 0.6) {$A_1$};
    \node[below] at (0.25, -2.25) {$B_1$};
    \node[left] at (-1.3, 0.6) {$C_1$};
    \node[right] at (0.75, 2) {$A_1 \oplus A_2 = A_2$};
    \node[left] at (-2.3, -0.9) {$C_1 \oplus C_2 = C_2$};
    \node[right] at (3.3, -2) {$B_1 \oplus B_2 = B_2$};
\end{tikzpicture}
\caption{Three Flags in $\pgl{3, \C}$}
\end{figure}

In the figure, the triple ratio is, in some sense, determining the intersections of the two dimensional portions of the flags. After we normalize as many degrees of freedom in the flags as possible and calculate the cross ratios, the triple ratio is then explicitly determining these intersections.  

Together cross ratios and triple ratios are used to construct the so called Fock-Goncharov coordinates on a surface. By simple computation, it is clear that each is a projective invariant and therefore the cross ratios and triple ratios coming from flags of eigendirections will not be changed by a simultaneous matrix conjugation which changes the collection of eigendirections. 

\section{Fock-Goncharov Coordinates for Conjugation into \texorpdfstring{$\pgl{2, \C}$}{PGL(2, C)}}
In the smallest case, we would like to know when a collection of elements in $\pgl{2, \C}$ is simultaneously conjugate into $\pgl{2, \R}$. Projective transformations in $\pgl{2, \R}$ will necessarily preserve the extended real line $\R \cup \set{\infty} = \hat{\R}$ in $\C P^1$, so it is necessary for each element in a subgroup of $\pgl{2, \C}$ to preserve a common projective $\R$ form for that subgroup to be conjugate into $\pgl{2, \R}$. On the other hand, only elements projectively in $\pgl{2, \R}$ preserve $\hat{\R}$, which can be seen by considering the action on $0 = [0, 1], 1 = [1, 1]$, and $\infty = [1, 0]$ in $\C P^1$. 

Thus, it is necessary and sufficient for each element to preserve a common projective $\R$ form for the subgroup to be conjugate into $\pgl{2, \R}$. We will use the eigenvalues and eigendirections of projective transformations to determine if each individual element preserves some projective $\R$ form and, if so, when each element in a collection preserves a common projective $\R$ form. 

\begin{restatable}{lemma}{pgltwoHH}
Two hyperbolic transformations in $\pgl{2, \C}$ without repeated eigenvalues are simultaneously conjugate into $\pgl{2, \R}$ if and only if the cross ratio of their eigendirections is in $\hat{\R}$.
\end{restatable}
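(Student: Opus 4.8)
The whole statement reduces to a claim about circles in $\hat{\C} = \C P^1$. By Corollary~\ref{conjIFFrForm}, two elements of $\pgl{2,\C}$ are simultaneously conjugate into $\pgl{2,\R}$ exactly when they preserve a common projective $\R$-form, and in dimension $2$ a projective $\R$-form is a (possibly infinite-radius) circle in $\hat{\C}$. Writing $g_1,g_2$ for the eigendirections of the first transformation $G$ and $h_1,h_2$ for those of the second transformation $H$, the lemma becomes: $G$ and $H$ preserve a common circle if and only if $[g_1,h_1,g_2,h_2]\in\hat{\R}$. The bridge between these conditions is the earlier proposition on real cross ratios: four points of $\hat{\C}$ are concyclic exactly when their cross ratio is real (with value $0$ or $\infty$ in the degenerate case that two of them coincide, which is still in $\hat{\R}$). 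I also use that $\pgl{2,\C}$ preserves cross ratios, so conjugating $G$ and $H$ by a common element leaves $[g_1,h_1,g_2,h_2]$ unchanged.

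\textbf{Reverse implication.} Assume $[g_1,h_1,g_2,h_2]\in\hat{\R}$. By the real-cross-ratio proposition the four eigendirections lie on a common circle $\mathcal{C}$ (if some coincide, take any circle through the at most three distinct points). It then suffices to show that a hyperbolic element of $\pgl{2,\C}$ with distinct eigenvalues whose two eigendirections lie on a circle $\mathcal{C}$ must preserve $\mathcal{C}$; applying this once to $G$ and once to $H$ exhibits $\mathcal{C}$ as a common preserved projective $\R$-form, and Corollary~\ref{conjIFFrForm} finishes. To prove this sub-claim I would conjugate by the M\"obius transformation carrying the two eigendirections to $\infty$ and $0$; the element then becomes $[\operatorname{diag}(\lambda_1,\lambda_2)]$, i.e.\ the map $z\mapsto r z$ with $r=\lambda_1/\lambda_2$, and $r$ is real because hyperbolicity places $\lambda_1$ and $\lambda_2$ on a common real line through the origin in $\C$. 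The image of $\mathcal{C}$ is a circle through $0$ and $\infty$, hence a line through the origin, and multiplication by a real scalar carries each line through the origin onto itself; conjugating back, $\mathcal{C}$ is preserved.

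\textbf{Forward implication.} Assume $G$ and $H$ are simultaneously conjugate into $\pgl{2,\R}$. Conjugating by the witnessing element, which does not change $[g_1,h_1,g_2,h_2]$, we may assume $G,H\in\pgl{2,\R}$ with real matrix representatives. A real $2\times 2$ matrix has either two real eigenvalues or a complex-conjugate pair $a\pm bi$ with $b\neq 0$; in the latter case the two eigenvalues lie on a common real line through the origin only if $a=0$, and then they are swapped by conjugation about the real axis, contradicting genericity of the eigenvalues (this is the point at which ``hyperbolic'' must be read so as to exclude being simultaneously elliptic). Hence the representatives have distinct real eigenvalues, so $G$ and $H$ are diagonalizable over $\R$, all of $g_1,g_2,h_1,h_2$ lie on $\hat{\R}\subseteq\hat{\C}$, and being concyclic their cross ratio is real. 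Alternatively, this direction is the dimension-$2$ case of the preceding lemma relating hyperbolic transformations preserving a common projective $\R$-form to real cross ratios.

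\textbf{Expected main obstacle.} The substantive content is the normalization sub-claim in the reverse direction --- pinning down exactly which circles a hyperbolic element preserves --- together with care about where genericity is used: the projective involution with representative $\begin{bmatrix}0&-1\\1&0\end{bmatrix}$ lies in $\pgl{2,\R}$ yet has eigendirections $i$ and $-i$ off the real line, so without the hypothesis that ``hyperbolic'' excludes ``elliptic'' the forward implication would fail. Thus a full proof must invoke ``hyperbolic'' in the generic sense, check that $z\mapsto r z$ with $r\in\R\bs\set{0}$ fixes every line through the origin, and dispose of the degenerate subcases in which some of the four eigendirections coincide (where the cross ratio is $0$ or $\infty$).
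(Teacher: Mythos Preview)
Your proof is correct and follows essentially the same route as the paper: reduce via Corollary~\ref{conjIFFrForm} to finding a common preserved circle, use the real-cross-ratio proposition to equate concyclicity of the four eigendirections with reality of the cross ratio, and check that a hyperbolic element preserves exactly the circles through its two fixed points. You are in fact more careful than the paper on one point: you flag that the forward direction needs ``hyperbolic'' in the generic sense (excluding the involution with eigenvalues $\pm i$, which lies in $\pgl{2,\R}$ yet has non-real eigendirections), whereas the paper's assertion that the preserved $\R$-forms are \emph{exactly} those through the fixed points silently assumes this.
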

\begin{proof}
The projective $\R$ forms preserved by a hyperbolic transformation in $\pgl{2, \C}$ are those through its pair of fixed points. Therefore, for two hyperbolic transformations to preserve a common projective $\R$ form, it is necessary and sufficient for the fixed points to all lie on a single projective $\R$ form. It is well known that the cross ratio of four points in the extended complex plane is real if and only if those four points lie on a circle or line, so we have that for a pair of hyperbolic transformations to be simultaneously conjugate into $\pgl{2, \R}$ it is necessary for the cross ratio of their pair of pairs of fixed points to be real. For hyperbolic transformations this is sufficient, as the circle or line the fixed points lie on will itself be preserved by both transformations and can be mapped by conjugation to the projective $\R$ form coming from the quotient of $\R^2 \subseteq \C^2$.

\end{proof}

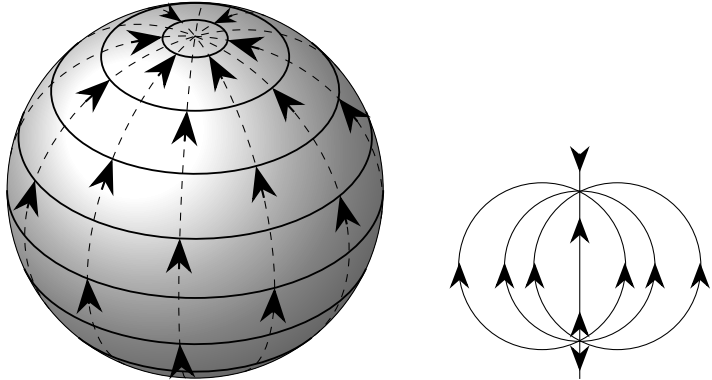
\begin{figure}[H]
\begin{tikzpicture}[scale=2.5]

    \LongitudePlane[planeN2]{-\angEl}{185}
    \LongitudePlane[planeN1]{-\angEl}{155}
    \LongitudePlane[plane0]{-\angEl}{125}
    \LongitudePlane[plane1]{-\angEl}{95}
    \LongitudePlane[plane2]{-\angEl}{65}
    \LongitudePlane[plane3]{-\angEl}{35}
    \LongitudePlane[plane4]{-\angEl}{5}

    \filldraw[ball color=white] (0, 0) circle (\Rad);
    \foreach \t in {-80, -60, ..., 80} {\DrawLatitudeCircle[\Rad]{\t}}
    \foreach \t in {-5, -35, ..., -175} {\DrawLongitudeCircle[\Rad]{\t}}

    \foreach \t in {5} {\draw[planeN2, -{Stealth[scale=2]}, thick] (-30+\t*20:\Rad) to (-20+\t*20:\Rad);}
    \foreach \t in {2, 4} {\draw[planeN1, -{Stealth[scale=2]}, thick] (-30+\t*20:\Rad) to (-20+\t*20:\Rad);}
    \foreach \t in {1, 3, 5} {\draw[plane0, -{Stealth[scale=2]}, thick] (-30+\t*20:\Rad) to (-20+\t*20:\Rad);}
    \foreach \t in {0, 2, 4} {\draw[plane1, -{Stealth[scale=2]}, thick] (-30+\t*20:\Rad) to (-20+\t*20:\Rad);}
    \foreach \t in {1, 3, 5} {\draw[plane2, -{Stealth[scale=2]}, thick] (-30+\t*20:\Rad) to (-20+\t*20:\Rad);}
    \foreach \t in {2, 4} {\draw[plane3, -{Stealth[scale=2]}, thick] (-30+\t*20:\Rad) to (-20+\t*20:\Rad);}
    \foreach \t in {3, 5} {\draw[plane4, -{Stealth[scale=2]}, thick] (-30+\t*20:\Rad) to (-20+\t*20:\Rad);}
    \foreach \t in {6} {\draw[plane0, -{Stealth[scale=1.25]}, thick] (-10+\t*20:\Rad) to (-20+\t*20:\Rad);}
    \foreach \t in {6} {\draw[plane2, -{Stealth[scale=1.25]}, thick] (-10+\t*20:\Rad) to (-20+\t*20:\Rad);}
\end{tikzpicture}
\hspace{0.5cm}
\begin{tikzpicture}[scale=0.5]
    \draw (0, -3) -- (0, 3);
    \draw (0, 0) circle (2);
    \draw (1, 0) circle (2.22);
    \draw (-1, 0) circle (2.22);
    \draw[thick, -{Stealth[scale=1.5]}] (0, -1.3) -- (0, -1.2);
    \draw[thick, -{Stealth[scale=1.5]}] (0, -2.7) -- (0, -2.8);
    \draw[thick, -{Stealth[scale=1.5]}] (-1.2, 0) -- (-1.2, 0.1);
    \draw[thick, -{Stealth[scale=1.5]}] (-2, 0) -- (-2, 0.1);
    \draw[thick, -{Stealth[scale=1.5]}] (-3.2, 0) -- (-3.2, 0.1);
    \draw[thick, -{Stealth[scale=1.5]}] (0, 1.2) -- (0, 1.3);
    \draw[thick, -{Stealth[scale=1.5]}] (0, 2.6) -- (0, 2.5);
    \draw[thick, -{Stealth[scale=1.5]}] (1.2, 0) -- (1.2, 0.1);
    \draw[thick, -{Stealth[scale=1.5]}] (2, 0) -- (2, 0.1);
    \draw[thick, -{Stealth[scale=1.5]}] (3.2, 0) -- (3.2, 0.1);
\end{tikzpicture}
\caption{Action of a Hyperbolic Transformation in $\pgl{2, \C}$ on $\C P^1$ and $\hat{\C}$}
\label{hyperbolicFigure}
\end{figure}

It is also well known that the projective $\R$ forms preserved by an elliptic transformation in $\pgl{2, \C}$ are those which are perpendicular in $\hat{\C}$ to the projective $\R$ forms through the two fixed points of the transformation. Hence, for two elliptic transformations to preserve a common projective $\R$ form it is necessary, but not sufficient, to have their fixed points all on a single projective $\R$ form. Equivalently, it is necessary for the cross ratio of the fixed points to be real. 

Circle inversion in $\hat{\C}$ about any projective $\R$ form preserved by an elliptic transformation will map its fixed points to each other, so having a real cross ratio is not sufficient for two elliptic transformations to preserve a common projective $\R$ form. An example can be seen by considering the elliptic transformation fixing $0$ and $\infty$ which preserves circles centered at the origin, and the elliptic transformation fixing $1$ and $-1$ which will not be inverted by any circle centered at the origin. 

\begin{restatable}{lemma}{pgltwoEE}
Two elliptic transformations $P$ and $Q$ in $\pgl{2, \C}$ without repeated eigenvalues and with eigendirections $p^-, p^+$ and $q^-, q^+$ are simultaneously conjugate into $\pgl{2, \R}$ if and only if $[p^-, q^-, p^+, q^+] \in \R^+$. 
\end{restatable}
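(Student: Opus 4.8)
The plan is to apply Corollary \ref{conjIFFrForm} to reduce the statement to the existence of a common preserved projective $\R$-form for $P$ and $Q$, and then to describe those forms explicitly. Recall that the projective $\R$-forms preserved by an elliptic transformation in $\pgl{2,\C}$ are exactly the circles (and lines) in $\hat{\C}$ about which circle inversion interchanges its two fixed points. Thus $P$ and $Q$ are simultaneously conjugate into $\pgl{2,\R}$ if and only if there is a single circle or line $\gamma$ whose associated inversion swaps $p^-\leftrightarrow p^+$ and also swaps $q^-\leftrightarrow q^+$. Throughout I take the four eigendirections $p^\pm,q^\pm$ to be distinct points of $\C P^1$ (needed for the cross ratio to be defined) and \emph{elliptic} in the strict, generic sense, so that each eigenvalue ratio is neither $1$ nor $-1$; otherwise a transformation that is simultaneously hyperbolic and elliptic would preserve extra projective $\R$-forms.

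First I would normalize: since $\pgl{2,\C}$ acts transitively on pairs of distinct points, conjugate by a M\"obius transformation sending $p^-\mapsto 0$ and $p^+\mapsto\infty$. This replaces each eigendirection by its image, leaves the cross ratio unchanged (it is a projective invariant), sends circles to circles, and preserves both the property ``simultaneously conjugate into $\pgl{2,\R}$'' and the property of preserving a projective $\R$-form, so we may assume $p^-=0$ and $p^+=\infty$. In these coordinates $P$ is the rotation $z\mapsto\omega z$ with $|\omega|=1$ and $\omega\neq\pm 1$, whose preserved circles and lines in $\hat{\C}$ are exactly the origin-centered circles $\{|z|=\rho\}$ (no line is preserved, since $\omega\neq\pm1$). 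The inversion about $\{|z|=\rho\}$ is $z\mapsto\rho^2/\bar z$, and since it is an involution it swaps $q^-$ and $q^+$ if and only if $q^+=\rho^2/\overline{q^-}$, i.e. $q^+\overline{q^-}=\rho^2$. Since $q^-\neq p^-=0$, a value $\rho>0$ solving this exists if and only if $q^+\overline{q^-}\in\R^+$, and because $q^+\overline{q^-}=(q^+/q^-)\,|q^-|^2$ this is equivalent to $q^+/q^-\in\R^+$.

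Finally I would identify $q^+/q^-$ with the cross ratio by direct computation from the paper's formula: $[p^-,q^-,p^+,q^+]=[0,q^-,\infty,q^+]=\frac{(0-q^+)(\infty-q^-)}{(0-q^-)(\infty-q^+)}=q^+/q^-$, so the condition found above is precisely $[p^-,q^-,p^+,q^+]\in\R^+$, and both implications follow. I expect the main obstacle to be getting the geometry exactly right rather than the arithmetic: one must correctly determine the whole pencil of projective $\R$-forms a given elliptic transformation preserves --- in particular ruling out reflections about lines through the fixed points, which fix $0$ and $\infty$ rather than swapping them, and where the genericity on the eigenvalue ratio is used --- and then observe that intersecting the two pencils forces the orientation-sensitive condition $\R^+$, not merely $\R$. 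This positivity is exactly what distinguishes the present lemma from the hyperbolic case above, where the relevant circles pass through the fixed points and plain concyclicity (a real cross ratio of either sign) already suffices.
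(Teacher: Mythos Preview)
Your proposal is correct and follows essentially the same approach as the paper: reduce to finding a common preserved projective $\R$-form, normalize so that $P$ fixes $0$ and $\infty$, identify the preserved $\R$-forms as origin-centered circles, and check when inversion in such a circle swaps $q^-$ and $q^+$. The only cosmetic differences are that the paper normalizes one more point (sending $q^+\mapsto 1$, so the cross ratio becomes $q^-$ itself rather than $q^+/q^-$) and that you are a bit more explicit about the genericity assumption $\omega\neq\pm1$ ruling out the lines through $0$ and $\infty$.
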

\begin{proof}
Geometrically, two elliptic transformations in $\pgl{2, \C}$ are simultaneously conjugate into $\pgl{2, \R}$ if the fixed points lie on a single projective $\R$ form and the two fixed points of one transformation do not separate the fixed points of the other transformation along the projective $\R$ form through those fixed points. If we order the points of the cross ratio as $[p^-, q^-, p^+, q^+]$ for fixed point pairs $p^-, p^+$ of one transformation and $q^-, q^+$ of the other, this means the cross ratio is real and positive. This can be seen as both necessary and sufficient by using M\"obius transformations transitivity on triples of points in $\hat{\C}$ to normalize such that $p^- = \infty, p^+ = 0, q^+ = 1$ and considering the cross ratio 
    $$[p^-, q^-, p^+, q^+] = [\infty, q^-, 0, 1] = \frac{(\infty - 1)(0 - q^-)}{(\infty - q^-)(0 - 1)} = q^-$$ 
The circles preserved by an elliptic transformation fixing $0$ and $\infty$ are those centered at the origin. Since circle inversion preserves rays from the circle center, if $q^-$ not real, then no circle inversion about a circle centered at the origin will map $q^+$ to $q^-$. Similarly, if $q^-$ is real but negative then no circle inversion centered at the origin can swap the map $q^+$ to $q^-$. 

On the other hand, if $q^-$ is real and positive, then inversion about the circle of radius $\sqrt{q^-}$ will map $1 \leftrightarrow q^-$. This circle is the common preserved projective $\R$ form of the two elliptic transformation and can be mapped to the extended real line by a conjugation which carries $P$ and $Q$ into $\pgl{2, \R}$. 

\end{proof}

Note that in each of the previous lemmas, the eigenvalues are assumed to have $\pgl{2, \R}$ compatible eigenvalues because they are hyperbolic or elliptic transformations. This will continue throughout the paper. 

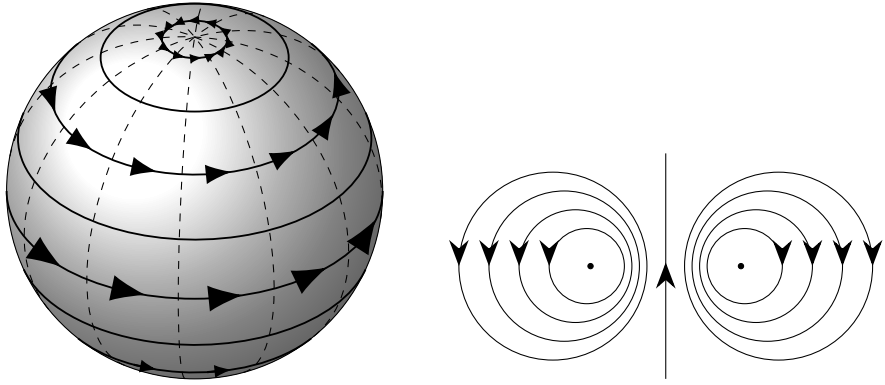
\begin{figure}[H] 
\begin{tikzpicture}[scale=2.5]

    \LatitudePlane[equator]{\angEl}{0}
    \LatitudePlane[cancer]{\angEl}{40} 
    \LatitudePlane[arctic]{\angEl}{80} 
    \LatitudePlane[capricorn]{\angEl}{-40} 

    \filldraw[ball color=white] (0, 0) circle (\Rad);
    \foreach \t in {-80, -60, ..., 80} {\DrawLatitudeCircle[\Rad]{\t}}
    \foreach \t in {-5, -35, ..., -175} {\DrawLongitudeCircle[\Rad]{\t}}

    \foreach \t in {0, 1, ..., 4} {\draw[equator, -{Latex[scale=2]}, thick] (-145+\t*30:\Rad) to (-135+\t*30:\Rad);}
    \foreach \t in {-1, 0, ..., 5} {\draw[cancer, -{Latex[scale=1.5]}, thick] (-145+\t*30:\Rad) to (-135+\t*30:\Rad);}
    \foreach \t in {0, 1, ..., 11} {\draw[arctic, -{Latex[scale=0.75]}, thick] (-145+\t*30:\Rad) to (-135+\t*30:\Rad);}
    \foreach \t in {1, 2} {\draw[capricorn, -{Latex[scale=1]}, thick] (-145+\t*30:\Rad) to (-135+\t*30:\Rad);}
\end{tikzpicture}
\hspace{.5cm}
\begin{tikzpicture}[scale=0.5]
    \draw (0, -3) -- (0, 3);
    \draw[fill=black] (-2, 0) circle (2pt);
    \draw[fill=black] (2, 0) circle (2pt);
    \draw (-2.1, 0) circle (1);
    \draw (-2.4, 0) circle (1.5);
    \draw (-2.7, 0) circle (2);
    \draw (-3, 0) circle (2.5);
    \draw (2.1, 0) circle (1);
    \draw (2.4, 0) circle (1.5);
    \draw (2.7, 0) circle (2);
    \draw (3, 0) circle (2.5);
    \draw[thick, -{Stealth[scale=1.5]}] (0, 0) -- (0, 0.1);
    \draw[thick, -{Stealth[scale=1.5]}] (-3.1, 0.1) -- (-3.1, 0);
    \draw[thick, -{Stealth[scale=1.5]}] (-3.9, 0.1) -- (-3.9, 0);
    \draw[thick, -{Stealth[scale=1.5]}] (-4.7, 0.1) -- (-4.7, 0);
    \draw[thick, -{Stealth[scale=1.5]}] (-5.5, 0.1) -- (-5.5, 0);
    \draw[thick, -{Stealth[scale=1.5]}] (3.1, 0.1) -- (3.1, 0);
    \draw[thick, -{Stealth[scale=1.5]}] (3.9, 0.1) -- (3.9, 0);
    \draw[thick, -{Stealth[scale=1.5]}] (4.7, 0.1) -- (4.7, 0);
    \draw[thick, -{Stealth[scale=1.5]}] (5.5, 0.1) -- (5.5, 0);
\end{tikzpicture}
\caption{Action of an Elliptic Transformation in $\pgl{2, \C}$ on $\C P^1$ and $\hat{\C}$}
\label{ellipticFigure}
\end{figure}

\begin{restatable}{lemma}{pgltwoHHH}
Given a collection $\set{H_1, H_2, \cdots, H_k}$ of hyperbolic transformations in $\pgl{2, \C}$ without repeated eigenvalues and where $H_1$ and $H_2$ do not share the same pair of eigendirections, the collection is simultaneously conjugate into $\pgl{2, \R}$ if and only if
\begin{enumerate}
    \item $[H_1, H_2] \in \hat{\R}$
\end{enumerate}
and, denoting the eigendirections of $H_j$ by $h_j^-$ and $h_j^+$, for each $H_i$ with $i > 2$
\begin{enumerate}[resume]
    \item $[h_1^-, h_i^-, h_1^+, h_2^-] \in \hat{\R}$
    \item $[h_1^-, h_i^+, h_1^+, h_2^-] \in \hat{\R}$
\end{enumerate}
\end{restatable}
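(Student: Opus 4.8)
The plan is to reduce the statement, via Corollary \ref{conjIFFrForm}, to the existence of a common preserved projective $\R$-form. In $\pgl{2,\C}$ such a form is a circle or line in $\hat{\C}$, and (as already used for two hyperbolic transformations) a hyperbolic element of $\pgl{2,\C}$ preserves exactly those circles and lines that pass through both of its eigendirections. So I would show that conditions (a)--(c) hold if and only if there is a single circle or line $C\subseteq\hat{\C}$ containing every $h_j^{\pm}$, $1\le j\le k$. Throughout I would use the proposition that four points of $\hat{\C}$ are concyclic (lie on a common circle or line) precisely when their cross ratio lies in $\hat{\R}$.

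For the forward direction I would take a common $C$ and simply read off the conditions: $h_1^-,h_2^-,h_1^+,h_2^+$ lie on $C$, so $[H_1,H_2]=[h_1^-,h_2^-,h_1^+,h_2^+]\in\hat{\R}$, which is (a); and for each $i>2$ the quadruples $h_1^-,h_i^-,h_1^+,h_2^-$ and $h_1^-,h_i^+,h_1^+,h_2^-$ lie on $C$, giving (b) and (c).

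For the converse I would argue as follows. Since $H_1$ and $H_2$ have distinct eigenvalues and do not share the same pair of eigendirections, $h_1^-,h_1^+,h_2^-$ are three distinct points and hence form a projective frame for $\C P^1$; let $C$ be the unique circle or line through them. Condition (a) says $h_1^-,h_2^-,h_1^+,h_2^+$ are concyclic, and since $h_1^-,h_2^-,h_1^+$ already lie on $C$ this forces $h_2^+\in C$. For each $i>2$, condition (b) makes $h_1^-,h_i^-,h_1^+,h_2^-$ concyclic, and since $h_1^-,h_1^+,h_2^-$ already lie on $C$ it forces $h_i^-\in C$; likewise (c) forces $h_i^+\in C$. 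Thus every $h_j^{\pm}$ lies on $C$, so each (hyperbolic) $H_j$ preserves $C$, and Corollary \ref{conjIFFrForm} yields that the collection is simultaneously conjugate into $\pgl{2,\R}$.

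The step I expect to require the most care is the non-degeneracy invoked in the converse: I would need to check that the hypothesis on $H_1$ and $H_2$ genuinely forces $h_2^-\notin\{h_1^-,h_1^+\}$, so that $h_1^-,h_1^+,h_2^-$ really is a projective frame and $C$ is well defined --- otherwise the cross ratios in (a)--(c) can collapse to $0$ or $\infty$ and impose nothing. I would also want to confirm that ``cross ratio in $\hat{\R}$'' still means ``concyclic'' in the coincidence cases among the four points, so that each deduction ``$\,\cdots\in C$'' stays valid. A convenient feature of the statement is that no induction is needed: once $C$ is pinned down by $H_1$ and $H_2$, each further $H_i$ is tested against $C$ independently, which is exactly why (b) and (c) involve only $H_1$, $H_2$, and $H_i$.
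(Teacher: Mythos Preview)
Your proposal is correct and follows essentially the same approach as the paper: both reduce the question, via the fact that a hyperbolic element preserves exactly the circles/lines through its two fixed points, to showing that conditions (a)--(c) hold if and only if all the $h_j^{\pm}$ lie on a single circle or line in $\hat{\C}$. The only cosmetic difference is that the paper carries this out by explicitly normalizing $h_1^-=\infty$, $h_1^+=0$, $h_2^-=1$ and then computing $[h_1^-,h_i^\pm,h_1^+,h_2^-]=h_i^\pm$, whereas you invoke the concyclicity criterion for real cross ratios directly; these are the same argument in different clothing.

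Your flagged concern about non-degeneracy is well taken and is in fact glossed over in the paper too: the hypothesis ``$H_1$ and $H_2$ do not share the same pair of eigendirections'' does not by itself force $h_2^-\notin\{h_1^-,h_1^+\}$, only that at least one of $h_2^-,h_2^+$ is new. The paper silently chooses the labeling so that $h_2^-$ is the new one (placing it at $1$). You should make the same move: after possibly swapping the labels $h_2^-\leftrightarrow h_2^+$, the triple $h_1^-,h_1^+,h_2^-$ is a genuine projective frame, the circle $C$ is well defined, and conditions (b), (c) are nonvacuous. With that one-line fix your argument is complete.
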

\begin{proof}
Since hyperbolic transformations preserve the projective $\R$ forms through their fixed points, a collection of transformations in $\pgl{2, \C}$ is simultaneously conjugate into $\pgl{2, \R}$ if and only if their fixed points all lie on a single projective $\R$ form. Without loss of generality, assume $H_1$ has eigendirections $\infty$ and $0$ and $H_2$ has an eigendirection at $1$. Then the only projective $\R$ form which might be preserved by both $H_1$ and $H_2$ is the extended real line in this basis, so as before $H_1$ and $H_2$ are simultaneously conjugate into $\pgl{2, \R}$ if and only if $[H_1, H_2] \in \hat{\R}$. 

If $H_1$ and $H_2$ preserve a common projective $\R$ form, normlize so that $h_1^- = \infty, h_1^+ = 0$, and $h_2^- = 1$. For any other transformation $H_i$, the transformation also preserves the same projective $\R$ form if and only if its eigendirections lie on the same projective $\R$ form as those of $H_1$ and $H_2$. Equivalently, $H_i$ also preserves a common projective $\R$ form if and only if its eigendirections are on the extended real line. We consider the pair of cross ratios
    $$[h_1^-, h_i^\pm, h_1^+, h_2^-] = \frac{(h_1^- - h_2^-)(h_1^+ - h_i^\pm)}{(h_1^- - h_i^\pm)(h_1^+ - h_2^-)}$$
which become
    $$[\infty, h_i^\pm, 0, 1] = h_i^\pm$$
so that the cross ratios are in the extended reals if and only if $h_i^-$ and $h_i^+$ are in $\hat{\R}$. Hence, $H_i$ preserves the same projective $\R$ form, and is therefore simultaneously conjugate into $\pgl{2, \R}$, if and only if $[h_1^-, h_i^-, h_1^+, h_2^-] \in \hat{\R}$ and $[h_1^-, h_i^+, h_1^+, h_2^-] \in \hat{\R}$.

\end{proof}

\begin{restatable}{lemma}{circleInversion}
If inversion about a circle $C$ in $\hat{\C}$ maps $q^- \mapsto q^+$ and $q^+ \mapsto q^-$, then any circle $\Delta$ through $q^-$ and $q^+$ intersects $C$ perpendicularly in $\hat{\C}$. 
\end{restatable}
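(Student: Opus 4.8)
The plan is to reduce to the case $C=\hat{\R}$ by a Möbius change of coordinates, after which inversion about $C$ becomes ordinary complex conjugation $z\mapsto\conj z$ and the claim becomes a one-line computation in the plane.

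First I would pick a Möbius transformation $T$ carrying three points of $C$ to $0,1,\infty$; since a circle or line in $\hat{\C}$ is determined by any three of its points, $T$ maps $C$ onto $\hat{\R}$. Both $z\mapsto\conj z$ and $T\circ\iota_C\circ T^{-1}$ are anti-holomorphic involutions fixing $\hat{\R}$ pointwise, so they agree: composing either with conjugation gives a Möbius map fixing $0,1,\infty$, hence the identity. Thus $\iota_C=T^{-1}\circ(z\mapsto\conj z)\circ T$. Since $T$ is conformal and sends circles and lines to circles and lines, it suffices to prove the statement after applying $T$, so I may assume $C=\hat{\R}$, $\iota_C(z)=\conj z$, and the hypothesis reads $q^+=\conj{q^-}$. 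Because $\iota_C$ fixes $C$ pointwise and the swapped points $q^\pm$ are distinct, neither lies on $C$; write $q^-=a+bi$ and $q^+=a-bi$ with $a\in\R$ and $b\neq 0$.

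Next I would examine an arbitrary circle or line $\Delta$ through $q^-$ and $q^+$. As $q^-$ and $q^+$ lie strictly on opposite sides of $C=\hat{\R}$, the set $\Delta$ must cross $C$, so the intersection is nonempty and only the angle is at issue. If $\Delta$ is a genuine circle, the chord $q^-q^+$ is the vertical segment from $a+bi$ to $a-bi$, whose perpendicular bisector is the horizontal line $\hat{\R}=C$; since the perpendicular bisector of any chord passes through the centre of the circle, the centre of $\Delta$ lies on $C$, and a line through the centre of a circle meets that circle at right angles. If instead $\Delta$ is a line, it is the unique line through $a+bi$ and $a-bi$, namely $\set{\re z = a}$, which is perpendicular to $\hat{\R}$. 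In either case $\Delta\perp C$ in these coordinates, and pulling back by the conformal map $T^{-1}$ yields $\Delta\perp C$ in the original configuration.

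The only real subtlety is the bookkeeping in the reduction: checking that conjugating inversion about $C$ by a Möbius map gives inversion about $T(C)$, and treating the line case on equal footing with the circle case. Both are routine facts of inversive geometry, and everything after the normalization is elementary planar computation, so I expect no genuine obstacle.
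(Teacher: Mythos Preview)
Your proof is correct, but it follows a different route from the paper's. The paper argues intrinsically: inversion about $C$ sends circles to circles and fixes $C$ pointwise, so it fixes the intersection points $C\cap\Delta$; since it also swaps $q^-$ and $q^+$, three points of $\Delta$ land back on $\Delta$, hence $\Delta$ is preserved setwise. Then anti-conformality of inversion forces the angle between $C$ and $\Delta$ at an intersection point to equal its own negative, giving perpendicularity. Your approach instead normalizes $C$ to $\hat{\R}$ by a M\"obius map, so that inversion becomes $z\mapsto\conj z$ and the hypothesis becomes $q^\pm=a\pm bi$, after which the perpendicular-bisector-of-a-chord observation (or the vertical-line case) finishes things by elementary plane geometry. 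The paper's argument is shorter and coordinate-free, and it generalizes immediately to any anti-conformal involution; your argument is more hands-on and avoids having to think carefully about why ``anti-conformal and preserves both curves'' forces a right angle, at the cost of the normalization step and a case split. Both are standard and adequate for this elementary lemma.
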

\begin{proof}
Circle inversion maps circles to circles and fixes the circle being inverted about. In particular, this means that inversion about $C$ fixes $C \cap \Delta = \set{*_1, *_2}$. Thus, $q^-, q^+$, and $*_1$ are mapped to $\Delta$, so inversion about $C$ preserves $\Delta$ but swaps the part inside $C$ and the part outside $C$. Since circle inversion is anti-conformal, this implies that $C$ and $\Delta$ intersect perpendicularly in $\hat{\C}$. 

\end{proof}

\begin{restatable}{lemma}{pgltwoEEE}
Given a collection $\set{E_1, E_2, \cdots, E_n}$ of elliptic transformations in $\pgl{2, \C}$ without repeated eigenvalues and where $E_1$ and $E_2$ do not share the same pair of eigendirections, the collection is simultaneously conjugate into $\pgl{2, \R}$ if and only if
\begin{enumerate}
    \item $[E_1, E_2] \in \R^+$
\end{enumerate}
and for each $E_i$ with $i > 2$
\begin{enumerate}[resume]
    \item $[E_1, E_i] \in \R^+$
    \item $[E_2, E_i] \in \R^+$
\end{enumerate}
\end{restatable}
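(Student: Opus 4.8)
The plan is to follow the template of the preceding hyperbolic-collection lemma, adapting it to the one genuinely new feature of the elliptic setting: an elliptic transformation of $\pgl{2,\C}$ preserves not a single projective $\R$-form but an entire pencil of them, namely the circles in $\hat{\C}$ about which its two eigendirections are inverse points (equivalently, by the circle-inversion lemma above, the circles orthogonal to every circle through those two eigendirections). The forward implication is then immediate: if the collection is simultaneously conjugate into $\pgl{2,\R}$, then so is each of the pairs $\{E_1,E_2\}$, $\{E_1,E_i\}$ and $\{E_2,E_i\}$, and applying the two-elliptic lemma to each pair yields conditions 1, 2 and 3.

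For the reverse implication, assume conditions 1--3. By condition 1 and the two-elliptic lemma, $E_1$ and $E_2$ preserve a common projective $\R$-form $C$; since $E_1$ and $E_2$ do not share the same eigendirection pair, Corollary~\ref{uniqueCorollary} shows that $C$ is the only such form. Normalize by a complex change of basis so that $E_1$ has eigendirections $\infty$ and $0$ and $E_2$ has an eigendirection at $1$; then the forms preserved by $E_1$ are exactly the circles $\{\,|z| = r\,\}$, condition 1 forces the other eigendirection of $E_2$ to be a positive real $s$, and $C = \{\,|z| = \sqrt{s}\,\}$. It now suffices to prove that every $E_i$ with $i > 2$ also preserves $C$: conjugating $C$ to $\hat{\R}$ carries $E_1$ and $E_2$ into $\pgl{2,\R}$, each $E_i$ then preserves $\hat{\R}$ and hence lies in $\pgl{2,\R}$, so by Corollary~\ref{conjIFFrForm} the original collection is simultaneously conjugate into $\pgl{2,\R}$.

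Writing the eigendirections of $E_i$ as $\alpha,\beta$, a direct computation gives $[E_1,E_i] = [\infty,\alpha,0,\beta] = \alpha/\beta$, so condition 2 says $\alpha = \lambda\beta$ for some $\lambda \in \R^{+}$; and the statement that $E_i$ preserves $C$ is equivalent to inversion $z \mapsto s/\bar z$ swapping $\alpha$ and $\beta$, that is, $\bar\alpha\beta = s$, that is, $\lambda|\beta|^{2} = s$. Similarly $[E_2,E_i] = \frac{(1-\beta)(s-\alpha)}{(1-\alpha)(s-\beta)}$, so condition 3 says that $\alpha$ and $\beta$ are inverse points with respect to some circle in the limit-point pencil of $\{1,s\}$. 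One direction of the equivalence is a one-line check: if $\lambda|\beta|^{2} = s$, then $[E_2,E_i]$ collapses to $\lambda|1-\beta|^{2}/|1-\alpha|^{2} > 0$, so condition 3 holds automatically. The content of the proof is the converse, namely that conditions 2 and 3 \emph{together} force $\lambda|\beta|^{2} = s$; equivalently, that $E_i$ preserves not merely some form it shares with $E_1$ and some (possibly different) form it shares with $E_2$, but precisely the form $C$ determined by $E_1$ and $E_2$.

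This is the step I expect to be the main obstacle, and it is distinctly more delicate than its hyperbolic analogue, where the three points $h_1^{-},h_1^{+},h_2^{-}$ already determine $\hat{\R}$ and one merely tests membership. Here conditions 2 and 3 each involve only a pair of transformations, and a priori $E_i$'s pencil could meet $E_1$'s pencil in one circle and $E_2$'s pencil in another, with neither equal to $C$. Closing this gap requires exploiting the special structure of the limit-point pencils --- in particular that $C$ is the unique circle common to the pencil preserved by $E_1$ and the pencil preserved by $E_2$ --- and showing that the positivity built into conditions 2 and 3 leaves no room for such an accidental configuration. Should the bare pairwise conditions prove insufficient, the natural remedy is to strengthen conditions 2 and 3 to three-point conditions also involving an eigendirection of $E_2$, in the spirit of the hyperbolic lemma and of the functions $f_{2j-4}, f_{2j-3}$ recorded for elliptic generators in the introduction.
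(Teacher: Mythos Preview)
Your instinct at the final step is correct, and in fact the obstacle cannot be overcome: the lemma as stated is false. Take $E_1$ with eigendirections $\infty,0$, $E_2$ with eigendirections $1,4$, and $E_3$ with eigendirections $5,6$ in $\hat\C$, each elliptic with distinct eigenvalues. Then
\[
[E_1,E_2]=\tfrac14,\qquad [E_1,E_3]=\tfrac56,\qquad [E_2,E_3]=\frac{(1-6)(4-5)}{(1-5)(4-6)}=\tfrac58,
\]
so all three conditions hold; yet the unique projective $\R$-form preserved by $\{E_1,E_2\}$ is the circle $\abs{z}=2$, and inversion in that circle sends $5\mapsto 4/5\neq 6$, so $E_3$ does not preserve it and the collection is not simultaneously conjugate into $\pgl{2,\R}$. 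Your diagnosis is exactly right: conditions 2 and 3 say only that the pencil of $\R$-forms preserved by $E_i$ meets the pencil of $E_1$ somewhere and the pencil of $E_2$ somewhere, and those two meeting points can differ from $C$.

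The paper's own proof contains the same gap. Its assertion that the circle $C$ of radius $\sqrt{q^+}$ ``by symmetry of intersections necessarily'' meets the circle through $q^-,q^+,r^-,r^+$ perpendicularly is equivalent to inversion in $C$ swapping $r^\pm$, which is the very conclusion at issue. Writing $r^-=te^{i\theta}$, $r^+=ue^{i\theta}$ with $t,u>0$ (this uses condition 2), a direct computation gives that the imaginary part of $[E_2,E_3]$ equals a nonzero multiple of $(tu-s)(u-t)(1-s)\sin\theta$. So when $\sin\theta\neq 0$ the \emph{reality} of $[E_2,E_3]$ already forces $tu=s$, which is $\lambda\abs{\beta}^2=s$ in your notation, and the paper's argument goes through; but when $\theta\in\{0,\pi\}$ --- precisely when the eigendirections of $E_i$ lie on the real line already determined by those of $E_1,E_2$ --- the condition is vacuous and the counterexample above arises. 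Your proposed remedy, replacing the purely pairwise conditions by ones that also reference an eigendirection of $E_2$ (as in the later theorem for mixed collections and in the elliptic clauses of the functions $f_{2j-4},f_{2j-3}$ from the introduction), is what is actually needed to make the statement correct.
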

\begin{proof}
As in the case of $2$ elliptic transformations, for $n$ elliptic transformations to preserve a common projective $\R$ form it is necessary for the fixed points of each pair of transformations to lie on a single projective $\R$ form. Again note that circle inversion is anti-conformal, so maps circles to circles. Since inversion about any projective $\R$ form preserved by an elliptic transformation swaps its fixed points, for three elliptic transformations to preserve a common projective $\R$ form, inversion about that common projective $\R$ form must preserve the circles through each pair of pairs of fixed points. 

Consider three elliptic transformations with fixed points $(p^-, p^+), (q^-, q^+),$ and $(r^-, r^+)$ respectively. Without loss of generality, assume that $p^- = \infty$, $p^+ = 0$, and $q^- = 1$. We know that for a pair of elliptic transformations to preserve a common projective $\R$ form it is necessary and sufficient for the cross ratio of their pair of pairs of fixed points to be real and positive. This pairwise condition is also sufficient for three elliptic transformations to preserve a common projective $\R$ form.

Suppose $[p^-, q^-, p^+, q^+], [p^-, r^-, p^+, r+],$ and $[q^-, r^-, q^+, r^+]$ are real and positive, so that in particular if we normalize so that $p^- = \infty, p^+ = 0, q^- = 1$ as in the previous lemmas, we have $q^+ \in \R^+$. The circle centered at the origin about which inversion maps $1 \mapsto q^+$ is the circle of radius $\sqrt{q^+}$, which will by symmetry of intersections necessarily intersect all three circles through a pair of pairs of fixed points perpendicularly. Thus, inversion about this circle will map the circle through each pair of pairs of fixed points to itself while also preserving rays from the origin. Therefore, inversion about this circle will swap each pair of fixed points and be preserved by all three elliptic transformations. 

By the previous lemma, any transformation which has cross ratios of eigendirections as described being real and positive will preserve this same projective $\R$ form and is thus simultaneously conjugate into $\pgl{2, \R}$. On the other hand, any transformation which is simultaneously conjugate into $\pgl{2, \R}$ will preserve a common projective $\R$ form. If we normalize so that this projective $\R$ form is the extended real line, then inversion about that projective $\R$ form is standard complex conjugation, so the eigendirection pairs are complex conjugates. Given a pair of complex numbers $z_1, z_2$ and their complex conjugates $\conj{z_1}, \conj{z_2}$, the circle centered at on the real axis and passing through $z_1$ and $z_2$ will contain all four points $z_1, z_2, \conj{z_1}, \conj{z_2}$ so the cross ratio $[z_1, z_2, \conj{z_2}, \conj{z_1}]$ is real. Further, is is clear that $z_1$ and $\conj{z_1}$ do not separate $z_2$ and $\conj{z_2}$ along this circle, so the cross ratio is real and positive. This applies to every required pair of eigendirections, completing the proof. 

\end{proof}
\begin{figure}
\begin{tikzpicture}[scale=1.25]
    \draw[->] (0, 0) to (4, 0);
    \draw[->] (0, 0) to (3, 3);
    \draw (0, 0) circle (1.7321);
    \draw (2, 1) circle (1.41);
    \draw[dashed] (0, 0) to (3, -0.68);
    \draw[dashed] (0, 0) to (1.35, 3);
    \draw[fill=black] (0, 0) circle (2pt);
    \draw[fill=black] (1, 0) circle (2pt);
    \draw[fill=black] (3, 0) circle (2pt);
    \draw[fill=black] (2.35, 2.35) circle (2pt);
    \draw[fill=black] (0.65, 0.65) circle (2pt);
    \node[below=0.1cm] at (0, 0) {$p^-$};
    \node[above=0.1cm] at (1, 0) {$q^-$};
    \node[above=0.1cm] at (3, 0) {$q^+$};
    \node[right=0.1cm] at (0.65, 0.65) {$r^-$};
    \node[below=0.1cm] at (2.35, 2.35) {$r^+$};
    \node at (6, 0.2) {$p^+ = \infty$};
\end{tikzpicture}
\caption{Three elliptic transformations in $\pgl{2, \C}$ preserving a common circle}
\end{figure}
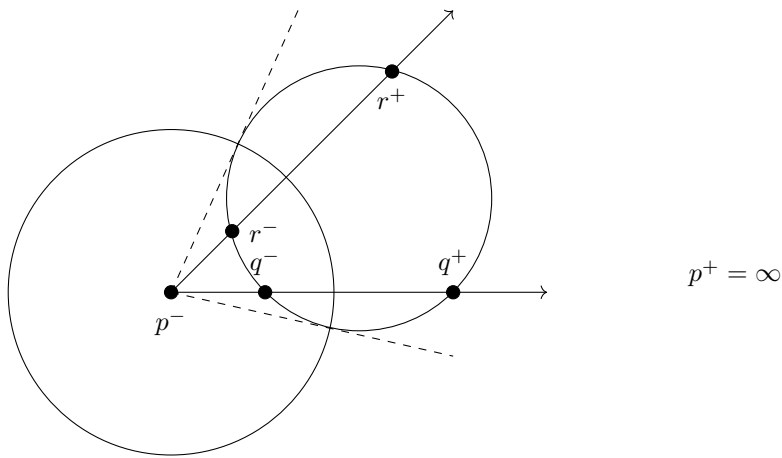

\begin{restatable}{lemma}{pgltwoHE}
A hyperbolic transformation in $\pgl{2, \C}$ with eigendirections $h^-,$ $h^+$ and an elliptic transformation in $\pgl{2, \C}$ with eigendirections $e^-, e^+$, each without repeated eigenvalues, are simultaneously conjugate into $\pgl{2, \R}$ if and only if the $[h^-, e^-, h^+, e^+] \in S^1$. 
\end{restatable}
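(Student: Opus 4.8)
The plan is to reduce the statement, as in the preceding lemmas, to an explicit question about circle inversions in $\hat{\C}$. By Corollary \ref{conjIFFrForm}, $H$ and $E$ are simultaneously conjugate into $\pgl{2,\R}$ if and only if they preserve a common projective $\R$ form, that is, a common circle or line in $\hat{\C}$. Recall the two ingredients already established: the projective $\R$ forms preserved by the hyperbolic transformation $H$ are exactly the circles and lines through its fixed points $h^{-}$ and $h^{+}$ (as used in the hyperbolic lemmas), and the projective $\R$ forms preserved by the elliptic transformation $E$ are exactly the circles and lines $C$ such that inversion about $C$ interchanges $e^{-}$ and $e^{+}$ (as used in the elliptic lemmas and the circle-inversion lemma above). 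So the lemma reduces to: there is a circle or line through $h^{-}$ and $h^{+}$ whose inversion swaps $e^{-}$ and $e^{+}$ if and only if $[h^{-}, e^{-}, h^{+}, e^{+}] \in S^{1}$.

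First I would dispense with the degenerate cases. If $\{h^{-}, h^{+}\}$ and $\{e^{-}, e^{+}\}$ are not disjoint, then any projective $\R$ form preserved by $H$ must pass through whichever of $e^{-}, e^{+}$ coincides with $h^{-}$ or $h^{+}$; inversion about it then fixes that point and cannot swap $e^{-}$ with $e^{+}$, so no common form exists. Correspondingly, a coincidence among the four eigendirections makes either a numerator or a denominator factor of $[h^{-}, e^{-}, h^{+}, e^{+}]$ vanish, so the cross ratio is $0$ or $\infty$ and in particular not in $S^{1}$; the two sides of the claimed equivalence then agree. Hence I may assume $h^{-}, h^{+}, e^{-}, e^{+}$ are four distinct points.

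Next I would normalize. Using transitivity of $\pgl{2,\C}$ on ordered triples in $\hat{\C}$ together with projective invariance of the cross ratio, apply the Möbius transformation sending $h^{-} \mapsto \infty$, $h^{+} \mapsto 0$, $e^{+} \mapsto 1$, and let $\tilde{e}$ denote the image of $e^{-}$; then $[h^{-}, e^{-}, h^{+}, e^{+}] = [\infty, \tilde{e}, 0, 1] = \tilde{e}$ by the cross ratio formula, exactly as in the normalizations used in the earlier $\pgl{2,\C}$ lemmas. In this normalization the projective $\R$ forms preserved by $H$ are precisely the lines through the origin, and inversion about the line through the origin at angle $\theta$ is the reflection $z \mapsto e^{2i\theta}\bar{z}$, which preserves $|z|$. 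Hence such a reflection can send $1$ to $\tilde{e}$ only if $|\tilde{e}| = 1$; conversely, if $|\tilde{e}| = 1$, writing $\tilde{e} = e^{2i\theta}$ yields a reflection with $1 \mapsto \tilde{e}$ and $\tilde{e} \mapsto e^{2i\theta}\overline{\tilde{e}} = |\tilde{e}|^{2} = 1$, so it swaps $1$ and $\tilde{e}$. Thus a common preserved projective $\R$ form exists if and only if $|\tilde{e}| = 1$, i.e. if and only if $[h^{-}, e^{-}, h^{+}, e^{+}] \in S^{1}$.

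The substantive content is entirely in the geometric descriptions of which circles a hyperbolic, respectively elliptic, transformation preserves, and these are already in hand from the earlier lemmas; once the normalization $h^{-} = \infty$, $h^{+} = 0$, $e^{+} = 1$ is made, the remainder is the short modulus computation above, so I do not anticipate a genuine obstacle. The step I would take most care over is matching the degenerate sub-cases — each of $h^{-}, h^{+}$ possibly equal to each of $e^{-}, e^{+}$ — against the value of the cross ratio, but as sketched these cause no trouble since in each such sub-case the cross ratio is forced to be $0$ or $\infty$ while the common projective $\R$ form manifestly fails to exist.
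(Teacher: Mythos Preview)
Your proof is correct and follows essentially the same approach as the paper: normalize the hyperbolic fixed points to $\infty$ and $0$, so that the preserved projective $\R$ forms are lines through the origin, and observe that reflection across such a line swaps $e^-$ and $e^+$ if and only if they have the same modulus, i.e.\ the cross ratio lies in $S^1$. The paper's version is slightly terser (it does not normalize $e^+$ to $1$, simply computing $[\infty,e^-,0,e^+]=e^-/e^+$) and does not separately treat the degenerate coincidences, but the underlying argument is the same.
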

\begin{proof}
Without loss of generality, assume the eigendirections of the hyperbolic transformation are at $\infty$ and $0$. Then the transformations preserve a common projective $\R$ form if and only if the eigendirections of the elliptic transformation can be inverted by reflection about a line through the origin. This is the case if and only if the magnitude of the eigendirections from the elliptic transformation are equal, and so the cross ratio $[h^-, e^-, h^+, e^+] = [\infty, e^-, 0, e^+] = \frac{e^-}{e^+}$ is in $S^1$. 

\end{proof}

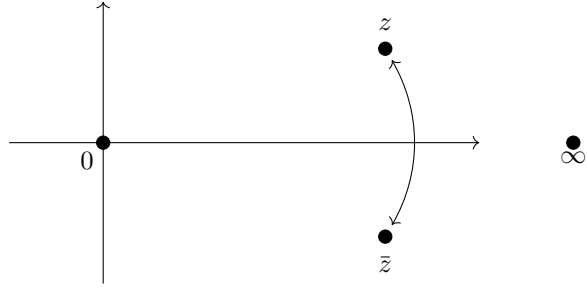
\begin{figure}
    \begin{tikzpicture}[scale=1.25]
        \draw[->] (-1, 0) -- (4, 0);
        \draw[->] (0, -1.5) -- (0, 1.5);
        \draw[fill=black] (0, 0) circle (2pt);
        \node[below left] at (0, 0) {0};
        \draw[fill=black] (5, 0) circle (2pt);
        \node[below] at (5, 0) {$\infty$};
        \draw[fill=black] (3, 1) circle (2pt);
        \node[above] at (3, 1.1) {$z$};
        \draw[fill=black] (3, -1) circle (2pt);
        \node[below] at (3, -1.1) {$\conj{z}$};
        \draw[bend left=30, <->, shorten <=5pt, shorten >= 5pt] (3, 1) to (3, -1);
    \end{tikzpicture}
    \caption{A hyperbolic and an elliptic transformation in $\pgl{2, \C}$}
\end{figure}

\begin{restatable}{theorem}{pgltwoOne}
Given a collection $\set{H_1, H_2, \cdots, H_m, E_1, \cdots, E_n}$ of $m \geq 2$ hyperbolic transformations and $n \geq 0$ elliptic transformations in $\pgl{2, \C}$, each without repeated eigenvalues and where $H_1$ and $H_2$ do not share the same pair of eigendirections, the collection is simultaneously conjugate into $\pgl{2, \R}$ if and only if
\begin{enumerate}
    \item $[H_1, H_2] \in \hat{\R}$
\end{enumerate}
for each $H_i$ with $i > 2$
\begin{enumerate}[resume]
    \item $[h_1^-, h_i^-, h_1^+, h_2^-] \in \hat{\R}$
    \item $[h_1^-, h_i^+, h_1^+, h_2^-] \in \hat{\R}$
\end{enumerate}
and for each $E_i$
\begin{enumerate}[resume]
    \item $[h_1^-, e_i^-, h_1^+, h_2^-] = \conj{[h_1^-, e_i^+, h_1^+, h_2^-]}$
\end{enumerate}
\end{restatable}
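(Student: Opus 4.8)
The plan is to run everything through Corollary~\ref{conjIFFrForm}: the collection is simultaneously conjugate into $\pgl{2,\R}$ if and only if all of its members preserve one common projective $\R$ form. The first move is to use $H_1$ and $H_2$ to pin down the only candidate for such a form. Since $H_1$ and $H_2$ are hyperbolic with distinct fixed points and do not share the same pair of eigendirections, their fixed points include at least three distinct points of $\C P^1$; a hyperbolic transformation preserves exactly the circles and lines through its two fixed points, so any common projective $\R$ form of $H_1$ and $H_2$ must contain all four of these points, and three distinct concircular points determine a unique circle or line. Hence $H_1$ and $H_2$ preserve at most one common projective $\R$ form (this is Corollary~\ref{uniqueCorollary} in the present situation), and by the lemma characterizing simultaneous conjugacy of two hyperbolic transformations they preserve exactly one precisely when $[H_1,H_2]\in\hat{\R}$, which is condition~(a).

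Assume (a). After relabelling the eigendirections of $H_2$ if necessary --- possible because $H_1$ and $H_2$ do not share the same pair, so some eigendirection of $H_2$ avoids both eigendirections of $H_1$ --- we may take $h_2^-$ distinct from $h_1^-$ and $h_1^+$ and apply the M\"obius transformation carrying $(h_1^-,h_1^+,h_2^-)$ to $(\infty,0,1)$. In these coordinates the unique common projective $\R$ form of $H_1$ and $H_2$ is the extended real line $\hat{\R}$, and the remaining task is to decide whether every other member of the collection also preserves $\hat{\R}$. A direct computation gives $[h_1^-,p,h_1^+,h_2^-]=[\infty,p,0,1]=p$ for any $p\in\C P^1$, so each cross ratio appearing in (b), (c), (d) is simply the normalized location of the corresponding eigendirection.

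For a hyperbolic $H_i$ with $i>2$: the transformation $H_i$ preserves $\hat{\R}$ if and only if both of its fixed points lie on $\hat{\R}$, which by the displayed identity is $h_i^-=[h_1^-,h_i^-,h_1^+,h_2^-]\in\hat{\R}$ together with $h_i^+=[h_1^-,h_i^+,h_1^+,h_2^-]\in\hat{\R}$, i.e.\ (b) and (c); this part is exactly the earlier lemma on collections of hyperbolic transformations. For an elliptic $E_i$: because $E_i$ is elliptic its two eigenvalues have ratio in $S^1$, and, as in the discussion preceding the lemma on pairs of elliptic transformations and in the proof of the hyperbolic--elliptic lemma, this forces $E_i$ to preserve exactly those projective $\R$ forms about which circle inversion interchanges its two eigendirections. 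For $\hat{\R}$, circle inversion is componentwise complex conjugation, so $E_i$ preserves $\hat{\R}$ if and only if $\conj{e_i^-}=e_i^+$ in $\C P^1$ (well defined by the earlier lemma on conjugates of projective points); in the normalized coordinates this reads $\conj{[h_1^-,e_i^-,h_1^+,h_2^-]}=[h_1^-,e_i^+,h_1^+,h_2^-]$, which is condition~(d).

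Putting the pieces together: if (a)--(d) all hold then $H_1$, $H_2$, every $H_i$ with $i>2$, and every $E_i$ preserve $\hat{\R}$, so by Corollary~\ref{conjIFFrForm} the collection is simultaneously conjugate into $\pgl{2,\R}$; conversely, a collection that is simultaneously conjugate into $\pgl{2,\R}$ preserves a common projective $\R$ form, which must be the unique common form of $H_1$ and $H_2$, forcing (a), and after normalizing that form to $\hat{\R}$ the preservation of $\hat{\R}$ by each $H_i$ and each $E_i$ yields (b)--(d). The step I expect to need the most care is the elliptic case: one must invoke that ``elliptic'' puts the eigenvalue ratio on the unit circle, which is exactly what guarantees that $E_i$ preserves \emph{every} circle inverting its fixed point pair --- in particular $\hat{\R}$ as soon as $\conj{e_i^-}=e_i^+$ --- rather than only those circles adapted to some other line through the origin; everything else is bookkeeping with cross ratios already carried out in the preceding lemmas.
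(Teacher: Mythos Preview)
Your proposal is correct and follows essentially the same approach as the paper: normalize $(h_1^-,h_1^+,h_2^-)\mapsto(\infty,0,1)$ so that the unique candidate projective $\R$ form is $\hat{\R}$, observe that $[h_1^-,p,h_1^+,h_2^-]=p$ in these coordinates, and then read off conditions (b)--(d) as the requirement that each remaining eigendirection lie on or be reflected through $\hat{\R}$. Your write-up is in fact slightly more careful than the paper's in two places --- you explicitly justify why $h_2^-$ can be taken distinct from $h_1^\pm$, and you flag that the elliptic case genuinely requires the eigenvalue ratio to lie in $S^1$ --- but the underlying argument is the same.
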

\begin{proof}
We know by a previous lemma that for $m$ hyperbolic transformations to preserve a common copy of $\R P^1$ it is necessary and sufficient for $[H_1, H_2] \in \hat{\R}$, $[h_1^-, h_2^-, h_2^+, h_i^-] \in \hat{\R} \; \forall i > 2$, and $[h_1^-, h_2^-, h_2^+, h_i^+] \in \hat{\R} \; \forall i > 2$. Thus, this theorem concerns conditions which are necessary and sufficient for elliptic transformations in $\pgl{2, \C}$ to also be simultaneously conjugate into $\pgl{2, \R}$. 

We know that for the $n$ elliptic transformations in $\pgl{2, \C}$ to preserve a common projective $\R$ form and be conjugate into $\pgl{2, \R}$ it is necessary and sufficient for $[E_1, E_2] \in \R^+$, $[E_1, E_i] \in \R \; \forall i > 2$, and $[E_2, E_i] \in \R \; \forall i > 2$. We do \textbf{not} use this condition, as we show that these relations follow from the cross ratios in condition \textbf{d}. 

Without loss of generality, assume that $h_1^- = \infty, h_1^+ = 0, h_2^- = 1$ so that the only projective $\R$ form which might be preserved by the collection is the extended real line. Then the cross ratios become $[h_1^-, e_i^\pm, h_1^+, h_2^-] = e_i^\pm$. The eigendirections of an elliptic transformation $E_i$ are swapped by inversion about the extended real line if and only if $e_i^- = \conj{e_i^+}$, so $[h_1^-, e_i^-, h_1^+, h_2^-] = \conj{[h_1^-, e_i^+, h_1^+, h_2^-]}$ if and only if $E_i$ is simultaneously conjugate into $\pgl{2, \R}$ with the hyperbolic transformations. The result and therefore previous conditions of $[E_1, E_2] \in \R^+$, $[E_1, E_i] \in \R \; \forall i > 2$, and $[E_2, E_i] \in \R \; \forall i > 2$ follow. 

\end{proof}

\begin{restatable}{theorem}{pgltwoTwo}
Given a collection $\set{E_1, E_2, \cdots, E_n, H_1, \cdots, H_m}$ of $n \geq 2$ elliptic transformations and $m \geq 0$ hyperbolic transformation in $\pgl{2, \C}$, each without repeated eigenvalues and where $E_1$ and $E_2$ do not share the same pair of eigenectors, the collection is simultaneously conjugate into $\pgl{2, \R}$ if and only if
\begin{enumerate}
    \item $[E_1, E_2] \in \R^+$
\end{enumerate}
for each $E_i$ with $i > 2$
\begin{enumerate}[resume]
    \item $[E_1, E_i] \in \R^+$
    \item $[E_2, E_i] \in \R^+$
\end{enumerate}
and denoting the eigendirections of $E_i$ by $e_i^-, e_i^+$ and of $H$ by $h^-, h^+$, for each $H_i$
\begin{enumerate}[resume]
    \item $[e_1^-, h_i^-, e_1^+, e_2^-] \cdot [e_1^-, h_i^-, e_1^+, e_2^+] \in S^1$
    \item $[e_1^-, h_i^+, e_1^+, e_2^-] \cdot [e_1^-, h_i^+, e_1^+, e_2^+] \in S^1$
\end{enumerate}
\end{restatable}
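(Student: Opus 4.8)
The plan is to bootstrap off the purely-elliptic and purely-hyperbolic cases already treated. First I would apply the lemma characterizing when $n$ elliptic transformations in $\pgl{2,\C}$ are simultaneously conjugate into $\pgl{2,\R}$: conditions \textbf{a}, \textbf{b}, \textbf{c} hold exactly when $E_1, \dots, E_n$ preserve a common projective $\R$ form $R$. Since $E_1$ and $E_2$ have no repeated eigenvalues and share no eigendirections, their four eigendirections cannot be partitioned into the $\C$-spans of disjoint nonempty subsets of a basis for $\C^2$ with each elliptic pair kept together (an elliptic pair would have to sit inside a single one-dimensional subspace), so by corollary \ref{uniqueCorollary} the common projective $\R$ form of $E_1$ and $E_2$ — hence that of $E_1, \dots, E_n$ — is unique. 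By corollary \ref{conjIFFrForm}, the whole collection is simultaneously conjugate into $\pgl{2,\R}$ iff this same $R$ is preserved by every $H_i$, so the theorem reduces to showing that, given \textbf{a}--\textbf{c}, a hyperbolic $H_i$ preserves $R$ iff conditions \textbf{d} and \textbf{e} hold for it.

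For that reduction I would normalize, using transitivity of $\pgl{2,\C}$ on triples in $\hat{\C}$, so that $e_1^- = \infty$, $e_1^+ = 0$, and $e_2^- = 1$. Condition \textbf{a} then reads $[E_1,E_2] = [\infty,1,0,e_2^+] = 1/e_2^+ \in \R^+$, forcing $e_2^+$ to be a positive real, and, exactly as in the two-elliptic lemma, $R$ is the circle $\set{z \mid \abs{z} = \sqrt{e_2^+}}$ centered at the origin. A short computation in this normalization gives $[e_1^-, h_i^{\pm}, e_1^+, e_2^-] = h_i^{\pm}$ and $[e_1^-, h_i^{\pm}, e_1^+, e_2^+] = h_i^{\pm}/e_2^+$, so the product in condition \textbf{d} equals $(h_i^-)^2/e_2^+$ and the product in condition \textbf{e} equals $(h_i^+)^2/e_2^+$. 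Since $e_2^+>0$, each such product has modulus $1$ precisely when $\abs{h_i^{\pm}} = \sqrt{e_2^+}$, that is, precisely when the corresponding fixed point of $H_i$ lies on $R$. Because a hyperbolic transformation preserves exactly those projective $\R$ forms passing through both of its fixed points, $H_i$ preserves $R$ iff $h_i^-$ and $h_i^+$ both lie on $R$, which is exactly \textbf{d} together with \textbf{e}. Assembling the pieces, the collection preserves the common projective $\R$ form $R$ iff \textbf{a}--\textbf{e} all hold, which by corollary \ref{conjIFFrForm} is equivalent to the claimed simultaneous conjugacy.

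I expect essentially all of the content to be this normalization-plus-modulus computation together with invocations of the elliptic and hyperbolic lemmas; there is no real obstacle, only two points requiring care. First, one must confirm that the common $\R$ form produced by \textbf{a}--\textbf{c} is the bounded circle of radius $\sqrt{e_2^+}$ and not a line through the origin — an elliptic $E_2$ cannot preserve the line joining its two fixed points — so that all of the cross ratios above are finite and the modulus argument is legitimate. Second, as with the standing genericity assumptions elsewhere in the paper, the eigendirections of each $H_i$ should be taken distinct from $e_1^{\pm}$ and $e_2^{\pm}$ so that the cross ratios in \textbf{d} and \textbf{e} are genuinely defined. The one thing to double-check carefully is that \textbf{d} and \textbf{e} \emph{jointly} encode ``both fixed points of $H_i$ lie on $R$'' rather than some weaker statement about $h_i^-$ and $h_i^+$ separately, which the explicit normalization makes transparent.
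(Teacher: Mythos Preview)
Your proposal is correct and follows essentially the same approach as the paper's proof: invoke the elliptic lemma for conditions \textbf{a}--\textbf{c}, normalize so that $e_1^-=\infty$, $e_1^+=0$, compute the two cross ratios explicitly, and observe that their product reduces to $(h_i^{\pm})^2$ times a positive real, so membership in $S^1$ is equivalent to the fixed point lying on the preserved circle. The only difference is cosmetic: the paper performs an additional scaling and rotation so that the preserved circle becomes the unit circle with $e_2^{\pm}=r,1/r$, whereas you normalize $e_2^-=1$ and carry the factor $e_2^+$ through the computation; your explicit invocation of corollary~\ref{uniqueCorollary} for uniqueness of $R$ is also a bit more careful than the paper's implicit handling.
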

\begin{proof}
We know by a previous lemma that for $n$ elliptic transformations to preserve a common projective $\R$ form it is necessary and sufficient for $[E_1, E_2] \in \R^+$ and for each $i > 2$ $[E_1, E_i] \in \R^+$ and $[E_2, E_i] \in \R^+$. 

For the hyperbolic transformations to also be simultaneously conjugate into $\pgl{2, \R}$, the fixed points of each hyperbolic transformation must be on the projective $\R$ form preserved by the elliptic transformations. Without loss of generality, assume that the fixed points of the elliptic transformation $E_1$ are at $\infty$ and $0$ such that the only possible common preserved projective $\R$ forms are circles in the complex plane centered at the origin. Further, suppose that the remaining elliptic transformations imply the only possible preserved projective $\R$ form is the unit circle, which can be done by scaling from the origin, and that the eigendirections of the elliptic transformation $E_2$ are at $r$ and $1/r$, for $r \in \R$, which can be done by rotating about the origin. Then a hyperbolic transformation is simultaneously conjugate into $\pgl{2, \R}$ if and only if its fixed points lie on the unit circle. If the fixed points of some hyperbolic transformation are $h_i^-, h_i^+$, the cross ratios simplify to $[e_1^-, h_i^-, e_1^+, e_2^-] = [\infty, h_i^-, 0, r] = \frac{h_i^-}{r}$ and $[e_1^-, h_i^-, e_1^+, e_2^+] = [\infty, h_i^-, 0, 1/r] = r h_i^-$ for $h_i^-$. Similarly, the cross ratios simplify to $[e_1^-, h_i^+, e_1^+, e_2^-] = [\infty, h_i^+, 0, r] = \frac{h_i^+}{r}$ and $[e_1^-, h_i^+, e_1^+, e_2^+] = [\infty, h_i^+, 0, 1/r] = r h_i^+$ for $h_i^+$. Since the product of the two cross ratios for $h_i^-$ gives $(h_i^-)^2$ and the product of the two cross ratios for $h_i^+$ gives $(h_i^+)^2$, the hyperbolic transformation is also simultaneously conjugate into $\pgl{2, \R}$ if and only if these two products of cross ratios are in $S^1$. 

\end{proof}

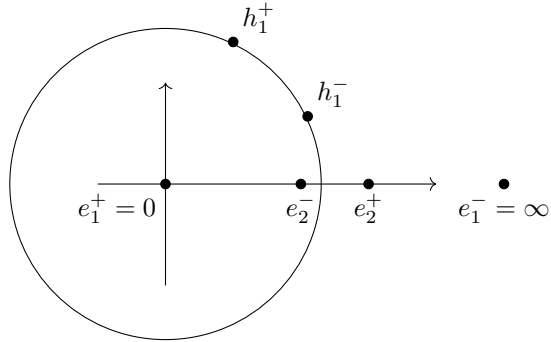
\begin{figure}
    \begin{tikzpicture}[scale=0.9]
        \draw[->] (-1, 0) -- (4, 0);
        \draw[->] (0, -1.5) -- (0, 1.5);
        \draw[fill=black] (0, 0) circle (2pt);
        \node[below left] at (0, 0) {$e_1^+ = 0$};
        \draw[fill=black] (5, 0) circle (2pt);
        \node[below] at (5, 0) {$e_1^- = \infty$};
        \draw[fill=black] (2, 0) circle (2pt);
        \node[below] at (2, 0) {$e_2^-$};
        \draw[fill=black] (3, 0) circle (2pt);
        \node[below] at (3, 0) {$e_2^+$};
        \draw (0, 0) circle (2.3);
        \draw[fill=black] (2.1, 1) circle (2pt);
        \node[above right] at (2.1, 1) {$h_1^-$};
        \draw[fill=black] (1, 2.1) circle (2pt);
        \node[above right] at (1, 2.1) {$h_1^+$};
    \end{tikzpicture}
    \caption{One hyperbolic and multiple elliptics transformations in $\pgl{2, \C}$}
\end{figure}

\section{Fock-Goncharov Coordinates for Conjugation into \texorpdfstring{$\pgl{3, \R}$}{PGL(3, R)}}

Transformations in $\pgl{3, \C}$ have eigendirections in $\C P^2$, so the fixed points do not necessarily lie on a single complex line and in general we are not able to take their cross ratios directly. Instead, we consider flags coming from the fixed points and take the cross ratios of quotients coming from these flags. 

Given flags $A, B, C, D$ in $\C^2$ in generic position, the four planes $A_2, A_1 \oplus B_1, A_1 \oplus C_1, A_1 \oplus D_1$ all share the common line $A_1$. We take the cross ratio of the four planes 
    $$[A_1 \oplus A_2, A_1 \oplus B_1, A_1 \oplus C_1, A_1 \oplus D_1]$$
by first projecting to $\C^3 / A_1$ to get four lines through the origin in $\C^2$, or equivalently four points in $\mathbb{C}P^1$. Similarly, the four planes $C_2, C_1 \oplus D_1, C_1 \oplus A_1, C_1 \oplus D_1$ share the common line $C_1$ so we take the cross ratio of the four planes
    $$[C_1 \oplus C_2, C_1 \oplus D_1, C_1 \oplus A_1, C_1 \oplus B_1]$$
by projecting to $\C^3 / C_1$ to get points in a copy of $\mathbb{C}P^1$.

However, we are able to take triple ratios directly, since the dimensionality is already correct. If $v_a, v_b, v_c, v_d$ are direction vectors for the lines $A_1, B_1, C_1, D_1$ and $f_a, f_b, f_c, f_d \in (\mathbb{C})^*$ are linear forms defining the planes $A_2, B_2, C_2, D_2$ then the triple ratios we take are
    $$r_3(A, B, C) = \frac{f_a (v_b) \; f_b (v_c) \; f_c (v_a)}{f_a (v_c) \; f_b (v_a) \; f_c (v_b)}$$
and
    $$r_3(A, C, D) = \frac{f_a (b_c) \; f_c (b_d) \; f_d (v_a)}{f_a (v_d) \; f_c (v_a) \; f_d (v_c)}$$

Given four flags $A, B, C, D$ in $\C^2$ coming from eigendirections, these two cross ratios and two triple ratios are used to determine if the corresponding transformations are simultaneously conjugate into $\pgl{3, \R}$.

\begin{figure}[H]
\begin{tikzpicture}[scale=1.3]
    \draw[dashed] (-2, 2) -- (2.5, 2.5);
    \draw[dashed] (-1, 2.5) -- (-2, -2.5);
    \draw[dashed] (-2.5, -2) -- (3, -2.5);
    \draw[dashed] (2, 3) -- (2.5, -3);
    \draw (0.25, 2.25) -- (2.25, 0) -- (0.25, -2.25) -- (-1.5, 0) -- (0.25, 2.25);
    \draw (0.25, 2.25) -- (0.25, -2.25);
    \draw[fill=black] (0.25, 2.25) circle (2pt);
    \draw[fill=black] (0.25, -2.25) circle (2pt);
    \draw[fill=black] (2.25, 0) circle (2pt);
    \draw[fill=black] (-1.5, 0) circle (2pt);
    \node[above] at (0.25, 2.25) {$A_1$};
    \node[below] at (0.25, -2.25) {$C_1$};
    \node[left] at (-1.5, 0) {$D_1$};
    \node[right] at (2.25, 0) {$B_1$};
    \node[right] at (2.5, 2.5) {$A_1 \oplus A_2 = A_2$};
    \node[right] at (3, -2.5) {$C_1 \oplus C_2 = C_2$};
    \node[right] at (2.5, -3) {$B_1 \oplus B_2 = B_2$};
    \node[left] at (-2, -2.5) {$D_1 \oplus D_2 = D_2$};
    \draw[fill=black] (0.25, 0.5) circle (2pt);
    \draw[fill=black] (0.25, -0.5) circle (2pt);
    \node[right] at (0.25, 0.5) {$a$};
    \node[right] at (0.25, -0.5) {$b$};
    \draw[fill=black] (-0.5, 0) circle (2pt);
    \node[above] at (-0.5, 0) {$t$};
    \draw[fill=black] (1, 0) circle (2pt);
    \node[above] at (1, 0) {$s$};
\end{tikzpicture}
\caption{Coordinates corresponding to two elements in $\pgl{3, \C}$}
\end{figure}
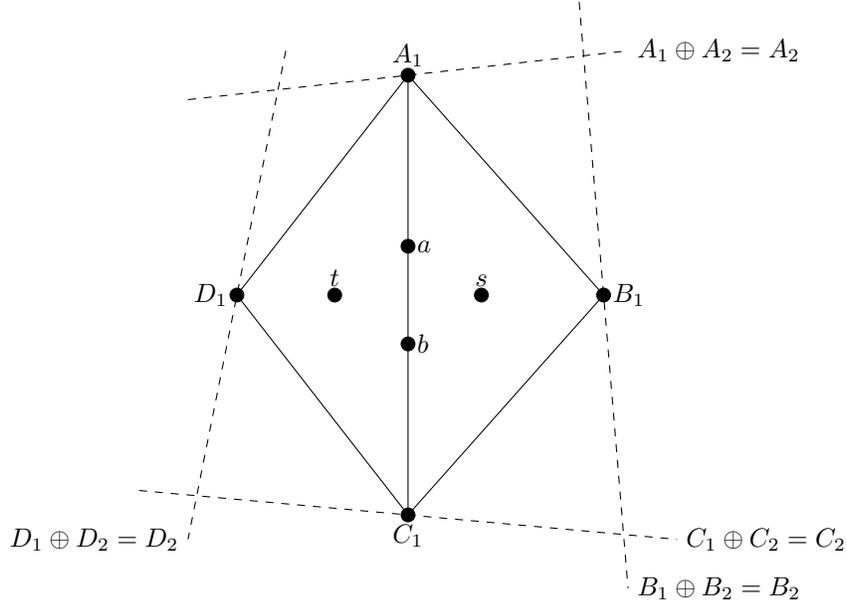

For the first two projective transformations $G, H$ in a collection, if $G$ has eigendirections $g_1, g_2, g_3$ and $H$ has eigendirections $h_1, h_2, h_3$ we create flags 
\begin{align*}
    A &= \spn{g_1} \subseteq \spn{g_1} \oplus \spn{g_2} \\
    C &= \spn{g_3} \subseteq \spn{g_3} \oplus \spn{g_2} \\
    B &= \spn{h_1} \subseteq \spn{h_1} \oplus \spn{h_2} \\
    D &= \spn{h_3} \subseteq \spn{h_3} \oplus \spn{h_2}
\end{align*}
and take the two cross ratios and two triple ratios of the quadruple of flags $A, B, C, D$ as above. 

For any subsequent projective transformation $M$ with eigendirections $m_1,$ $m_2, m_3$, make flags 
\begin{align*}
    \beta &= \spn{m_1} \subseteq \spn{m_1} \oplus \spn{m_2} \\
    \beta' &= \spn{m_3} \subseteq \spn{m_3} \oplus \spn{m_2} 
\end{align*}

Then we take the cross ratios and triple ratios as described above, using two different quadruples of flags depending on the types of projective transformations, each giving two new cross ratios and one new triple ratio. Depending on the type of $M$, these combined four cross ratios and two triple ratios will tell us is $M$ is also simultaneously conjugate into $\pgl{3, \R}$. 

\subsection{\texorpdfstring{$m \geq 2$}{m >= 2} Hyperbolic transformations}

\begin{restatable}{lemma}{pglthreeHH} \label{pglthreeHH}
Two hyperbolic elements $G$ and $H$ in $\pgl{3, \C}$ without repeated eigenvalues, with flags pairs $A, C$ and $B, D$ respectively, and where $A, B, C, D$ are in generic position are simultaneously conjugate into $\pgl{3, \R}$ if and only if
\begin{enumerate}
    \item their two cross ratios coming from $[A, B, C, D]$ are real
    \item their two triple ratios $r_3 (A, B, C)$ and $r_3 (A, C, D)$ are real
\end{enumerate}
\end{restatable}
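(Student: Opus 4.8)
The plan is to pass to a normal form under simultaneous $\pgl{3,\C}$ conjugation and then compute everything in coordinates. The two cross ratios of $[A,B,C,D]$, the two triple ratios $r_3(A,B,C)$ and $r_3(A,C,D)$, and the property of being simultaneously conjugate into $\pgl{3,\R}$ are all unchanged by a simultaneous conjugation in $\pgl{3,\C}$; and generic position of $A,B,C,D$ makes $g_1,g_2,g_3,h_1$ a projective frame. So I would first conjugate so that $g_1=e_1$, $g_2=e_2$, $g_3=e_3$, $h_1=e_1+e_2+e_3$, and write $h_2=(x_1,x_2,x_3)$, $h_3=(y_1,y_2,y_3)$ in these coordinates; generic position then forces each $y_i\neq 0$ and forces $h_1,h_2,h_3$ to be noncollinear (otherwise $B_2\oplus D_1\neq\C^3$). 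The heart of the argument is the reduction: in this normalization $\set{G,H}$ is simultaneously conjugate into $\pgl{3,\R}$ if and only if $h_2,h_3\in\R P^2$. Granting this, the forward implication of the lemma is immediate, since in a basis where $G,H\in\pgl{3,\R}$ all four flags are real flags, so the quotient points and defining covectors entering the cross ratios and triple ratios are real, hence so are the four numbers.

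To prove the reduction: if $\set{G,H}$ is simultaneously conjugate into $\pgl{3,\R}$, then by Corollary~\ref{conjIFFrForm} it preserves a common projective $\R$ form $S$, and since $G,H$ are hyperbolic, Theorem~\ref{conjugationTheorem} puts $g_1,g_2,g_3,h_1,h_2,h_3\in S$. Now an $\R$ form containing $e_1,e_2,e_3$ is the image of $\mathrm{span}_\R\set{c_1e_1,c_2e_2,c_3e_3}$ for some $c_i\in\C^\times$, and containing $e_1+e_2+e_3$ forces the $c_i$ to share an argument, so $S$ is the image of $\R^3$ and $h_2,h_3\in\R P^2$. Conversely, if $h_2,h_3\in\R P^2$ then $H$ has a real eigenbasis with eigenvalues on a real line through the origin, hence preserves $\R P^2$, as does the diagonal transformation $G$, and Corollary~\ref{conjIFFrForm} gives simultaneous conjugacy into $\pgl{3,\R}$.

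For the reverse implication of the lemma I would compute the invariants in the normalized coordinates. A direct calculation shows the two cross ratios of $[A,B,C,D]$ equal $y_2/y_1$ and $y_3/y_2$, so they are real exactly when $y_1,y_2,y_3$ share an argument, i.e. when $h_3\in\R P^2$; assume this and choose $h_3$ with real entries. Then $r_3(A,B,C)=\dfrac{x_2-x_1}{x_3-x_2}$ and $r_3(A,C,D)=\dfrac{y_1(y_2x_3-y_3x_2)}{y_3(y_1x_2-y_2x_1)}$. Writing $h_2=\re(h_2)+i\,\im(h_2)$ and $(p,q,r):=\re(h_2)\times\im(h_2)$ — which depends only on the point $h_2$ up to positive scaling and vanishes precisely when $h_2\in\R P^2$ — a short expansion of imaginary parts shows that the reality of these two triple ratios is equivalent to the pair of linear equations $p+q+r=0$ and $y_1p+y_2q+y_3r=0$.

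Finally, since $h_1\neq h_3$ the vectors $(1,1,1)$ and $(y_1,y_2,y_3)$ are $\R$-independent, so these two equations pin $(p,q,r)$ to the line $\R\cdot\bigl((1,1,1)\times(y_1,y_2,y_3)\bigr)$; if $(p,q,r)\neq 0$ this puts $\re(h_2)$ and $\im(h_2)$ in $\mathrm{span}_\R\set{(1,1,1),(y_1,y_2,y_3)}$, hence $h_2\in\mathrm{span}_\C\set{h_1,h_3}$, making $h_1,h_2,h_3$ collinear and contradicting generic position. Therefore $(p,q,r)=0$, so $h_2\in\R P^2$, and together with $h_3\in\R P^2$ the reduction yields the result. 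The main obstacle is this last stretch: verifying that the triple-ratio conditions really do collapse to linear equations in $(p,q,r)$ and then invoking generic position to discard the spurious one-dimensional family of solutions; the cross-ratio computation and the $\R$-form reduction are comparatively routine.
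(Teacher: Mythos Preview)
Your proof is correct and follows the same overall architecture as the paper's: normalize using the projective frame $g_1,g_2,g_3,h_1$ (the paper normalizes $h_3$ instead, a cosmetic difference), read off from the two cross ratios that the un-normalized one-dimensional piece of $H$'s flag pair is projectively real, and then use the two triple ratios to force the remaining eigendirection $h_2$ to be projectively real.

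The one substantive difference is in how you extract ``$h_2\in\R P^2$'' from the reality of the two triple ratios. The paper treats $r_3(A,B,C)$ and $r_3(A,C,D)$ as a $2\times 2$ linear system in two of the coordinates of $h_2$ (after normalizing the third) and solves explicitly, checking that the solution is real. Your reformulation via $(p,q,r)=\re(h_2)\times\im(h_2)$ is cleaner: it turns ``$h_2\in\R P^2$'' into the single vector condition $(p,q,r)=0$, and a short computation (which checks out: the imaginary parts of the two triple ratios are, up to nonzero real factors, $p+q+r$ and $y_1p+y_2q+y_3r$) reduces the hypothesis to two homogeneous linear constraints on $(p,q,r)$. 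The final step---observing that a nonzero $(p,q,r)$ would force $\re(h_2),\im(h_2)\in\mathrm{span}_\R\{h_1,h_3\}$ and hence make $h_1,h_2,h_3$ coplanar---is a nice way to invoke generic position without any case analysis. This buys you a coordinate-free endgame and avoids the paper's side case $b_2'=0$; the paper's direct solve, on the other hand, makes the dependence on the triple-ratio values completely explicit.
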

\begin{proof}
$\Longrightarrow$ 

Since cross ratios and triple ratios are invariant under conjguation, conjugate so that the collection of elements is in $\pgl{3, \R}$. Then the result is direct from the fact that all eigendirections and linear functionals will be strictly real. 

$\Longleftarrow$ 

Normalize so that the eigendirections of $G$ are the standard basis vectors, $g_1 = e_1, g_2 = e_3, g_3 = e_2$, and one of the eigendirections of $H$ is the sum of the standard basis vectors, $h_3 = e_1 + e_2 + e_3$. Denote the remaining eigendirections of $H$ by $h_1 = [b_1, b_2, b_3], h_2 = [b_1', b_2', b_3']$. Then we have flags
\begin{align*}
    A &= \spn{[1, 0, 0]} \subseteq \spn{[1, 0, 0]} \oplus \spn{[0, 1, 0]} \\
    B &= \spn{[b_1, b_2, b_3]} \subseteq \spn{[b_1, b_2, b_3]} \oplus \spn{[b_1', b_2', b_3']} \\
    C &= \spn{[0, 0, 1]} \subseteq \spn{[0, 0, 1]} \oplus \spn{[0, 1, 0]} \\
    D &= \spn{[1, 1, 1]} \subseteq \spn{[1, 1, 1]} \oplus \spn{[b_1', b_2', b_3']}
\end{align*}

For cross ratios, we quotient by $A_1 = [1, 0, 0]$ to obtain
\begin{align*}
    [A_2, A_1 \oplus B_1, A_1 \oplus C_1, A_1 \oplus D_1] / A_1 &= \left[ \infty, \frac{b_2}{b_3}, 0, 1 \right] \\
    &= \frac{(\infty - 1)(0 - \frac{b_2}{b_3})}{(\infty - \frac{b_2}{b_3})(0 - 1)} = \frac{b_2}{b_3}
\end{align*}
and quotient by $C_1 = [0, 0, 1]$ to obtain
\begin{align*}
    [C_1 \oplus A_1, C_1 \oplus B_1, C_2, C_1 \oplus D_1] / A_1 &= \left[ \infty, \frac{b_1}{b_2}, 0, 1 \right] \\
    &= \frac{(\infty - 1)(0 - \frac{b_1}{b_2})}{(\infty - \frac{b_1}{b_2})(0 - 1)} = \frac{b_1}{b_2}
\end{align*}

Thus, these two cross ratios are real if and only if the one dimensional part of $B$ is projectively real and so lies in the projective $\R$ form determined by the eigendirections composing the flags $A, C$ and the one dimensional part of the flag $D$. 

Then using linear functionals 
\begin{align*}
    f_A &= e_3^* \\
    f_B &= [b_2 b_3' - b_3 b_2', b_3 b_1' - b_1 b_3', b_1 b_2' - b_2 b_1']^* \\
    f_C &= - e_1^* \text{ (written equivalently as } e_1^* \text{)}\\
    f_D &= [b_3' - b_2', b_1' - b_3', b_2' - b_1']^*
\end{align*}
defining the complex 2-dimensional parts of the flags and coming from the cross product in the standard manner, consider the triple ratios
\begin{align*}
    r_3(A, B, C) = \frac{f_A (v_B) \; f_B (v_C) \; f_C (v_A)}{f_A (v_C) \; f_B (v_A) \; f_C (v_B)} 
    &= \frac{e_3^* (v_B) \; f_B (e_3) \; e_1^* (e_1)}{e_3^* (e_3) \; f_B (e_1) \; e_1^* (v_B)} \\
    &= \frac{(b_3)(b_1 b_2' - b_2 b_1')(1)}{(1)(b_2 b_3' - b_3 b_2')(b_1)} \\
    &= \frac{b_1 b_2' b_3 - b_1' b_2 b_3}{b_1 b_2 b_3' - b_1 b_2' b_3}
\end{align*}
and
\begin{align*}
    r_3(A, C, D) = \frac{f_a (v_c) \; f_c (v_d) \; f_d (v_a)}{f_a (v_d) \; f_c (v_a) \; f_d (v_c)} 
    &= \frac{e_3^* (e_3) \; e_1^* (v_D) \; f_D (e_1)}{e_3^* (v_D) \; e_1^* (e_1) \; f_D (e_3)} \\
    &= \frac{(1)(1)(b_3' - b_2')}{(1)(1)(b_2' - b_1')} = \frac{b_3' - b_2'}{b_2' - b_1'}
\end{align*}

where $b_1, b_2,$ and $b_3$ are real by the assumption that the cross ratios are real. If $b_3 = 0$ then $B_1 \cap A_2 \neq \set{\vec{0}}$, so the flags are not in generic position. Similarly, if $b_1 = 0$ then $B_1 \cap C_2 \neq \set{\vec{0}}$, so the flags are not in generic position. Hence, assume that $b_1 \neq 0$ and $b_3 \neq 0$. Denoting the first triple ratio $s_1$ and the second $s_2$ then solving for $b_3'$ in the latter gives $b_3' = s_2(b_2' - b_1') + b_2'$.

If $b_2' = 0$ then the triple ratio $s_2$ being real means $b_3'$ is a real multiple of $b_1'$ and so $B_2$ lives in the same projective $\R$ form and the collection is simultaneously conjugate into $\pgl{3, \R}$. Otherwise, assume without loss of generality that $b_2' = 1$. 

In that case, solving for $b_1'$ in the first triple ratio then gives
\begin{align*}
    b_1' (s_1 b_2' b_3 - b_2 b_3) &= s_1 b_1 b_2 b_3' - b_1 b_2' b_3 \\
    b_1' (s_1 b_2' b_3 - b_2 b_3) &= s_1 b_1 b_2 (s_2(b_2' - b_1')) - b_1 b_2' b_3 \\
    b_1' (s_1 b_2' b_3 - b_2 b_3 + s_1 s_2 b_1 b_2) &= s_1 s_2 b_1 b_2 b_2' - b_1 b_2' b_3
\end{align*}
so that by assuming the triple ratios $s_1$ and $s_2$ are real and $b_2' = 1$ we have
\begin{align*}
    b_1' &= \frac{s_1 s_2 b_1 b_2 b_2' - b_1 b_2' b_3}{s_1 b_2' b_3 - b_2 b_3 + s_1 s_2 b_1 b_2} \\
    &= \frac{b_2' (s_1 s_2 b_1 b_2 - b_1 b_3)}{s_1 b_2' b_3 - b_2 b_3 + s_1 s_2 b_1 b_2} \\
    &= \frac{b_2' r_1}{b_2' r_2 + r_3} = \frac{r_1}{r_2 + r_3}
\end{align*}
for real numbers $r_1, r_2, r_3$. Hence, $b_1'$ is real and therefore $b_3'$ is also, meaning $B_2$ is in the same projective $\R$ form and the collection is simultaneously conjugate into $\pgl{3, \R}$. Since every eigendirection lies in the same projective $\R$ form, $G$ and $H$ are simultaneously conjugate into $\pgl{3, \C}$. 

\begin{figure}[H]
\begin{tikzpicture}[scale=0.9]
    \draw[dashed] (-2, 2) -- (2.5, 2.5);
    \draw[dashed] (-1, 2.5) -- (-2, -2.5);
    \draw[dashed] (-2.5, -2) -- (3, -2.5);
    \draw[dashed] (2, 3) -- (2.5, -3);
    \draw (0.25, 2.25) -- (2.25, 0) -- (0.25, -2.25) -- (-1.5, 0) -- (0.25, 2.25);
    \draw (0.25, 2.25) -- (0.25, -2.25);
    \draw[fill=black] (0.25, 2.25) circle (2pt);
    \draw[fill=black] (0.25, -2.25) circle (2pt);
    \draw[fill=black] (2.25, 0) circle (2pt);
    \draw[fill=black] (-1.5, 0) circle (2pt);
    \node[above] at (0.25, 2.25) {$\spn{g_1} = \spn{e_1}$};
    \node[below] at (0.25, -2.25) {$\spn{g_3} = \spn{e_2}$};
    \node[left] at (-1.5, 0) {$\spn{e_1 + e_2 + e_3} = \spn{h_3}$};
    \node[right] at (2.25, 0) {$\spn{h_1} = \spn{ [b_1, b_2, b_3] }$};
    \node[right] at (2.5, 2.5) {$\spn{e_1} \oplus \spn{e_3} = \spn{g_1} \oplus \spn{g_2}$};
    \node[right] at (3, -2.5) {$\spn{e_2} \oplus \spn{e_3} = \spn{g_3} \oplus \spn{g_2}$};
    \node[right] at (2.5, -3) {$\spn{h_1} \oplus \spn{h_2}$};
    \node[left] at (-2, -2.5) {$\spn{h_3} \oplus \spn{h_2}$};
    \draw[fill=black] (0.25, 0.5) circle (2pt);
    \draw[fill=black] (0.25, -0.5) circle (2pt);
    \node[right] at (0.25, 0.5) {$a$};
    \node[right] at (0.25, -0.5) {$b$};
    \draw[fill=black] (-0.5, 0) circle (2pt);
    \node[above] at (-0.5, 0) {$s_1$};
    \draw[fill=black] (1, 0) circle (2pt);
    \node[above] at (1, 0) {$s_2$};
\end{tikzpicture}
\caption{Normalization for two hyperbolic elements in $\pgl{3, \C}$}
\end{figure}
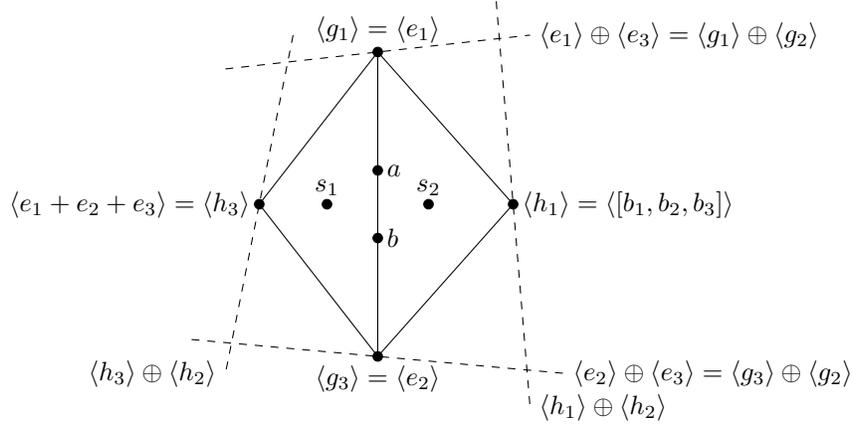
\end{proof}

\begin{restatable}{corollary}{pglthreeHHH} \label{pglthreeHHH}
Any subsequent hyperbolic projective transformation $M$ without repeated eigenvalues and with eigendirection flag pair $\beta, \beta'$, with $A, \beta, C, D$ in generic position and $A, \beta', C, D$ in generic position, is also simultaneously conjugate into $\pgl{3, \R}$ with $G$ and $H$ if and only if
\begin{enumerate}
    \item the four cross ratios, two coming from $[A, \beta, C, D]$ and two coming from $[A, \beta', C, D]$, are real
    \item the two new triple ratios $r_3 (A, \beta, C)$ and $r_3 (A, \beta', C)$ are real
\end{enumerate}
\end{restatable}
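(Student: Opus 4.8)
The plan is to reduce this to Lemma \ref{pglthreeHH}: once $G$ and $H$ are simultaneously conjugate into $\pgl{3, \R}$, the projective $\R$ form they preserve is already rigid, so appending $M$ only requires checking that $M$ preserves that \emph{same} form, and the proof of Lemma \ref{pglthreeHH} computes exactly when this happens. First, the standing hypothesis that $M$ is ``also simultaneously conjugate into $\pgl{3, \R}$ with $G$ and $H$'' lets us assume $G$ and $H$ already satisfy conditions \textbf{a} and \textbf{b} of Lemma \ref{pglthreeHH}. By Corollary \ref{conjIFFrForm} the collection $\set{G, H, M}$ is simultaneously conjugate into $\pgl{3, \R}$ if and only if all three transformations preserve a common projective $\R$ form. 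Since $G$ has generic eigenvalues, Corollary \ref{uniqueCorollary} shows that $G$ and $H$ preserve at most one projective $\R$ form; in the normalization of the proof of Lemma \ref{pglthreeHH}, where $g_1 = e_1$, $g_2 = e_3$, $g_3 = e_2$ and $h_3 = e_1 + e_2 + e_3$, that form is the projection $\R P^2$ of $\R^3 \subseteq \C^3$. Hence $\set{G, H, M}$ is conjugate into $\pgl{3, \R}$ if and only if $M$ also preserves $\R P^2$ in this basis.

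Next, because $M$ is hyperbolic with distinct eigenvalues, $M$ preserves $\R P^2$ if and only if its three eigendirections $m_1, m_2, m_3$ all lie in $\R P^2$: if they do, a real change of basis diagonalizes $M$ with diagonal entries the hyperbolic eigenvalues, which are real up to a common scalar, so $M \in \pgl{3, \R}$ in this basis; conversely an element of $\pgl{3, \R}$ has all its eigendirections in $\R P^2$. It remains to translate ``$m_1, m_2, m_3 \in \R P^2$'' into the stated ratios, and this is precisely the computation in the proof of Lemma \ref{pglthreeHH} with $\beta, \beta'$ in place of $B, D$. Quotienting by $A_1$ and by $C_1$ shows the two cross ratios of $[A, \beta, C, D]$ are the ratios of consecutive coordinates of $m_1$ in the $(e_i)$ basis, so they are real if and only if $m_1 \in \R P^2$, and likewise the two cross ratios of $[A, \beta', C, D]$ are real if and only if $m_3 \in \R P^2$; genericity of $A, \beta, C, D$ and of $A, \beta', C, D$ guarantees the relevant coordinates are nonzero so these ratios are defined. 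Given $m_1, m_3 \in \R P^2$, the triple ratios $r_3(A, \beta, C)$ and $r_3(A, \beta', C)$ reduce to the same pair of equations governing $b_1', b_2', b_3'$ in Lemma \ref{pglthreeHH}, and solving them --- with the same split into the cases where the middle coordinate of $m_2$ vanishes or is normalized to $1$ --- shows the triple ratios are real if and only if $m_2 \in \R P^2$. The forward implication is immediate: cross ratios and triple ratios are conjugation invariant, so after conjugating the whole collection into $\pgl{3, \R}$ all eigendirections and defining functionals are real and every ratio is real.

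The main obstacle is the last step, handling the second eigendirection $m_2$: the cross ratios see only the lines $m_1$ and $m_3$, so one must re-run the ``solving for $b_1', b_3'$'' portion of the proof of Lemma \ref{pglthreeHH} to confirm that realness of the two new triple ratios, together with $m_1$ and $m_3$ already in $\R P^2$, pins $m_2$ to $\R P^2$ --- including verifying that the genericity hypotheses on $A, \beta, C, D$ and $A, \beta', C, D$ are exactly what is needed to license the divisions and to dispose of the degenerate subcase.
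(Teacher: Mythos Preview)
Your proposal is correct and follows essentially the same approach as the paper: normalize using the frame determined by $A, C, D_1$ so that the unique preserved projective $\R$ form is $\R P^2$, use the cross ratios from $[A,\beta,C,D]$ and $[A,\beta',C,D]$ to force $m_1, m_3 \in \R P^2$, and then use the two triple ratios to pin down $m_2$. One small imprecision: the pair of triple ratio equations here is not literally ``the same pair'' as in Lemma~\ref{pglthreeHH} --- there the second equation was $r_3(A,C,D)$, which simplified because $D_1 = [1,1,1]$, whereas here both $r_3(A,\beta,C)$ and $r_3(A,\beta',C)$ have the form of the first equation $r_3(A,B,C)$ with the real coordinates of $m_1$ and $m_3$ respectively playing the role of $(b_1,b_2,b_3)$ --- but the paper's own proof is equally informal at this point, simply noting that one obtains two equations in the three projective unknowns of $m_2$ and solves.
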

\begin{proof}
For other hyperbolic elements in $\pgl{3, \C}$, the calculations are equivalent, with the same assumptions on $A, C,$ and $D$. For each subsequent hyperbolic projective transformation $M$, construct two pictures, with the flag at $B$ being replaced by the two flags $\beta$ and $\beta' $ coming from the eigendirections of $M$ as described above. The four cross ratios will determine the one dimensional parts of $\beta$ and $\beta'$. Real cross ratios give rise to projectively real one dimensional parts of $\beta$ and $\beta'$, or equivalently, two eigendirections of $M$ which live in the projective $\R$ form preserved by $G$ and $H$. 

The triple ratios involving $A, C, D$ have already produced a triple ratio when considering the first two projective transformations, so produce no new information. We take the cross ratios from each picture and the triple ratio on the right side. Following the same calculations as for $G$ and $H$ with the one dimensional parts of $\beta$ and $\beta'$ being projectively real, the triple ratios $r_3 (A, \beta, C)$ and $r_3 (A, \beta', C)$ give us a system of two equations of three unknowns. However, in projective space we can normalize one of these unknowns, and the two equations give the ratio of the remaining unknowns to that which is normalized. Thus, if the collection of cross ratios and triple ratios is real then the remaining eigendirection of $M$ also lives in the same projective $\R$ form. Hence, $M$ is simultaneously conjugate into $\pgl{3, \R}$ if and only if all six of these coordinates (four new cross ratios and two new triple ratios) are real. 

\begin{figure}[H]
\begin{tikzpicture}[scale=.8]
    \draw[dashed] (-2, 2) -- (2.5, 2.5);
    \draw[dashed] (-1, 2.5) -- (-2, -2.5);
    \draw[dashed] (-2.5, -2) -- (3, -2.5);
    \draw[dashed] (2, 3) -- (2.5, -3);
    \draw (0.25, 2.25) -- (2.25, 0) -- (0.25, -2.25) -- (-1.5, 0) -- (0.25, 2.25);
    \draw (0.25, 2.25) -- (0.25, -2.25);
    \draw[fill=black] (0.25, 2.25) circle (2pt);
    \draw[fill=black] (0.25, -2.25) circle (2pt);
    \draw[fill=black] (2.25, 0) circle (2pt);
    \draw[fill=black] (-1.5, 0) circle (2pt);
    \node[above] at (0.25, 2.25) {$\spn{g_1} = \spn{e_1}$};
    \node[below] at (0.25, -2.25) {$\spn{g_3} = \spn{e_2}$};
    \node[left] at (-1.5, 0) {$\spn{[1, 1, 1]} = \spn{h_3}$};
    \node[right] at (2.25, 0) {$\spn{m_1}$};
    \node[right] at (2.5, 2.5) {$\spn{e_1} \oplus \spn{e_3}$};
    \node[right] at (3, -2.5) {$\spn{e_2} \oplus \spn{e_3}$};
    \node[right] at (2.5, -3.1) {$\spn{m_1} \oplus \spn{m_2}$};
    \draw[fill=black] (0.25, 0.5) circle (2pt);
    \draw[fill=black] (0.25, -0.5) circle (2pt);
    \node[right] at (0.25, 0.5) {$a$};
    \node[right] at (0.25, -0.5) {$b$};
    \draw[fill=black] (1, 0) circle (2pt);
    \node[right] at (1, 0) {$s$};
\end{tikzpicture}
\begin{tikzpicture}[scale=0.8]
    \draw[dashed] (-2, 2) -- (2.5, 2.5);
    \draw[dashed] (-1, 2.5) -- (-2, -2.5);
    \draw[dashed] (-2.5, -2) -- (3, -2.5);
    \draw[dashed] (2, 3) -- (2.5, -3);
    \draw (0.25, 2.25) -- (2.25, 0) -- (0.25, -2.25) -- (-1.5, 0) -- (0.25, 2.25);
    \draw (0.25, 2.25) -- (0.25, -2.25);
    \draw[fill=black] (0.25, 2.25) circle (2pt);
    \draw[fill=black] (0.25, -2.25) circle (2pt);
    \draw[fill=black] (2.25, 0) circle (2pt);
    \draw[fill=black] (-1.5, 0) circle (2pt);
    \node[above] at (0.25, 2.25) {$\spn{g_1} = \spn{e_1}$};
    \node[below] at (0.25, -2.25) {$\spn{g_3} = \spn{e_2}$};
    \node[left] at (-1.5, 0) {$\spn{[1, 1, 1]} = \spn{h_3}$};
    \node[right] at (2.25, 0) {$\spn{m_3}$};
    \node[right] at (2.5, 2.5) {$\spn{e_1} \oplus \spn{e_3}$};
    \node[right] at (3, -2.5) {$\spn{e_2} \oplus \spn{e_3}$};
    \node[right] at (2.5, -3.1) {$\spn{m_3} \oplus \spn{m_2}$};
    \draw[fill=black] (0.25, 0.5) circle (2pt);
    \draw[fill=black] (0.25, -0.5) circle (2pt);
    \node[right] at (0.25, 0.5) {$a'$};
    \node[right] at (0.25, -0.5) {$b'$};
    \draw[fill=black] (1, 0) circle (2pt);
    \node[right] at (1, 0) {$s'$};
\end{tikzpicture}
\caption{Further hyperbolic elements conjugate into $\pgl{3, \R}$}
\end{figure}
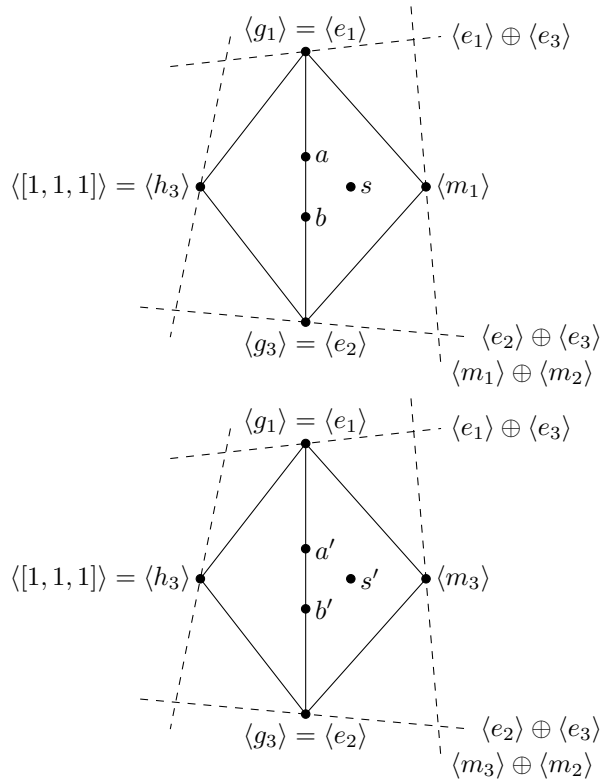
\end{proof}

\subsection{\texorpdfstring{$m \geq 2$}{m >= 2} Hyperbolic and \texorpdfstring{$n \geq 1$}{n >= 1} Elliptic}

We now consider adding some elliptic transformations to a set of hyperbolic transformations. In the $\pgl{3, \C}$ case we refer to projective transformations with two elliptic eigenvalues and one hyperbolic eigenvalue as elliptic, as it is not possible to have strictly elliptic elements in $\pgl{3, \C}$. Though a similar obstruction always arises in odd dimensions, we do not refer to elements with a hyperbolic eigendirection as elliptic in any higher dimension; this is only done in the $\pgl{3, \C}$ case. 

Again choose two hyperbolic projective transformations $G$ and $H$ and normalize so that the eigendirections of $G$ are $e_1, e_3$, and $e_2$ respectively, with an eigendirection of $H$ being $e_1 + e_2 + e_3$. Denote the remaining eigendirections of $H$ by $h_1 = [b_1, b_2, 1], h_2 = [b_1', b_2', 1]$. Then we have the four flags
\begin{align*}
    A &= [1, 0, 0] \subseteq [1, 0, 0] \oplus [0, 0, 1] \\
    B &= [b_1, b_2, 1] \subseteq [b_1, b_2, 1] \oplus [b_1', b_2', 1] \\
    C &= [0, 1, 0] \subseteq [0, 1, 0] \oplus [0, 0, 1] \\
    D &= [1, 1, 1] \subseteq [1, 1, 1] \oplus [b_1', b_2', 1]
\end{align*}
As in the $\pgl{2, \C}$ case, we first check all of the hyperbolic transformations using these first two. We do this using the flags coming from the eigendirections as described in the all hyperbolic case above. 

For the elliptics, since the eigendirections of a real, elliptic projective transformation must be componentwise complex conjugates, this means that the cross ratios with quotients as above must be conjugate. For an elliptic transformation $M$ with eigendirections $m_1, m_2, m_3$, where $m_1$ and $m_3$ correspond to eigenvalues whose ratio is in $S^1$, make flags
    $$\beta = m_1 \subseteq m_1 \oplus m_2 \subseteq m_1 \oplus m_2 \oplus m_3$$
and
    $$\beta = m_3 \subseteq m_3 \oplus m_2 \subseteq m_3 \oplus m_2 \oplus m_1$$

\begin{restatable}{lemma}{pglthreeHHE} \label{pglthreeHHE}
If $G$ and $H$ are two hyperbolic transformations without repeat eigenvalues which are simultaneously conjugate into $\pgl{3, \R}$ and have eigendirection flag pairs $A, C$ and $B, D$, then an elliptic transformation $M$ without repeat eigenvalues with flag pairs $\beta, \beta'$, where $A, \beta, C, D$ are in generic position and $A, \beta', C, D$ are in generic position, is also simultaneously conjugate into $\pgl{3, \R}$ if only if
\begin{enumerate}
    \item $[A, \beta, C, D] = \conj{[A, \beta', C, D]}$ after both quotients
    \item $r_3 (A, \beta, C) = \conj{r_3 (A, \beta', C)}$
\end{enumerate}
\end{restatable}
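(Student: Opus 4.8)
The plan is to first strip the statement down to a condition on how the standard conjugation acts on the eigendirections of $M$, and then translate that condition into the cross ratios and the triple ratio.

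\textbf{Reduction.} Since $G$ is diagonalizable its three eigendirections span $\C^3$, and by the genericity of $A,\beta,C,D$ the eigendirection $h_3$ of $H$ is generic with respect to them, so these four points form a projective frame for $\C P^2$ (exactly the frame in the setup of Lemma~\ref{pglthreeHH}). By Corollary~\ref{uniqueCorollary}, $\{G,H\}$ — and a fortiori $\{G,H,M\}$ — therefore preserves at most one projective $\R$ form. Since $G$ and $H$ are hyperbolic and simultaneously conjugate into $\pgl{3,\R}$ they do preserve one, hence exactly one; conjugate everything so that it is the projection $\R P^2$ of $\R^3\subseteq\C^3$ and normalize to the basis used just before the lemma, so that $G$ has eigendirections $e_1,e_3,e_2$, $h_3=e_1+e_2+e_3$, and the coordinates $b_1,b_2,b_1',b_2'$ of $h_1,h_2$ are real (cf. Lemma~\ref{pglthreeHH}). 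By Corollary~\ref{conjIFFrForm} together with Theorem~\ref{conjugationTheorem}, $M$ is simultaneously conjugate into $\pgl{3,\R}$ together with $G$ and $H$ if and only if $M$ preserves this same $\R P^2$, equivalently if and only if the eigendirections of $M$ respect the standard conjugation: the hyperbolic eigendirection $m_2$ is projectively real and the elliptic pair satisfies $m_3\sim\conj{m_1}$.

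\textbf{Cross ratios.} As in the computation in Lemma~\ref{pglthreeHH}, the two cross ratios coming from $[A,\beta,C,D]$ after the two quotients are expressions in the coordinates of the one-dimensional part $\beta_1=\spn{m_1}$ alone (the images of $A,C,D$ normalize to $\infty,0,1$), and together they determine $\spn{m_1}$ as a point of $\C P^2$; the same expressions in the coordinates of $m_3$ give $[A,\beta',C,D]$, since $\beta'_1=\spn{m_3}$. Hence condition (a) holds if and only if these coordinate expressions for $m_1$ are the complex conjugates of those for $m_3$, i.e. $m_3\sim\conj{m_1}$. This settles the elliptic-pair half of the reduction in both directions.

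\textbf{Triple ratio.} Writing $m_1=[p_1,p_2,p_3]$ and $m_2=[\mu_1,\mu_2,\mu_3]$, taking $f_\beta$ to be the cross product $m_1\times m_2$, and using the real data $f_A,f_C,v_A,v_C$ coming from $G$, a direct computation in the style of Lemma~\ref{pglthreeHH} gives
$$r_3(A,\beta,C)=\frac{p_3}{p_1}\cdot\frac{L_1(m_2)}{L_2(m_2)},\qquad L_1(x)=p_1x_2-p_2x_1,\quad L_2(x)=p_2x_3-p_3x_2 .$$
Assuming (a) and choosing $m_3=[\conj{p_1},\conj{p_2},\conj{p_3}]$, the analogous computation yields $\conj{r_3(A,\beta',C)}=\frac{p_3}{p_1}\cdot\frac{L_1(\conj{m_2})}{L_2(\conj{m_2})}$, so (b) reads $L_1(m_2)/L_2(m_2)=L_1(\conj{m_2})/L_2(\conj{m_2})$. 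The map $x\mapsto[L_1(x):L_2(x)]$ is the linear projection of $\C P^2$ away from the point $\{L_1=L_2=0\}=[p_1:p_2:p_3]=m_1$, and the genericity of $A,\beta,C,D$ and of $A,\beta',C,D$ guarantees $m_2,\conj{m_2}\neq m_1$, $p_1p_3\neq 0$, and that the denominators do not vanish; hence (b) holds if and only if $m_1,m_2,\conj{m_2}$ are collinear in $\C P^2$. Now $m_1,m_2,m_3$ are three distinct eigendirections of the diagonalizable $M$, hence a basis of $\C^3$. If $m_2$ were not projectively real then $m_2\neq\conj{m_2}$, and collinearity would force $m_1\in\spn{m_2,\conj{m_2}}$, hence (conjugating) $m_3=\conj{m_1}\in\spn{\conj{m_2},m_2}$, so $m_1,m_2,m_3$ would all lie in the two-dimensional space $\spn{m_2,\conj{m_2}}$ — impossible. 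Thus (a) and (b) together force $m_2$ projectively real; conversely, if $m_2$ is real (in a real representative) and $m_3=\conj{m_1}$, the formulas for $r_3(A,\beta,C)$ and $\conj{r_3(A,\beta',C)}$ agree term by term, so (b) holds, while (a) holds by the cross-ratio step. Combining with the Reduction, $M$ preserves $\R P^2$ — equivalently is simultaneously conjugate into $\pgl{3,\R}$ with $G$ and $H$ — if and only if (a) and (b) hold.

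\textbf{Main obstacle.} The delicate point is the last step. Unlike Lemma~\ref{pglthreeHH}, which had two independent triple ratios, here only the single condition (b) is available, and on its own the equation $L_1(m_2)/L_2(m_2)=L_1(\conj{m_2})/L_2(\conj{m_2})$ asserts merely collinearity of $m_1,m_2,\conj{m_2}$, which is a priori weaker than ``$m_2$ real.'' The resolution is to bring in the one piece of information the flags do not record directly, namely that $m_1,m_2,m_3$ are linearly independent because they are the eigendirections of a single diagonalizable transformation; this eliminates the spurious non-real solutions. (Equivalently, one may mimic the coordinate argument of Lemma~\ref{pglthreeHH}, solving (b) for the coordinates of $m_2$ under (a) and the genericity constraints and finding that they must be real multiples of one real vector.)
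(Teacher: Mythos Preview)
Your proof is correct, and it takes a genuinely different route from the paper's.

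The paper keeps the flags $A,C$ as generic real data (writing $v_A=[r_1,s_1,t_1]$, $f_A=[x_1,y_1,z_1]^*$, etc.) and instead spends its normalization freedom on $M$: after using the cross-ratio step it puts the elliptic pair at $m_1=[i,1,0]$, $m_3=[-i,1,0]$ and writes $m_2=[a,b,1]$. The two triple ratios are then computed explicitly, and the condition $r_3(A,\beta,C)=\conj{r_3(A,\beta',C)}$ is simplified by brute force to $(a-\conj a)+(\conj b-b)i=0$, forcing $a,b\in\R$. You do the opposite: you pin $A,C,D_1$ to the standard frame and leave $m_1,m_2$ generic, obtaining the closed formula $r_3(A,\beta,C)=\tfrac{p_3}{p_1}\,L_1(m_2)/L_2(m_2)$ and recognizing (b) as the geometric statement that $m_1,m_2,\conj{m_2}$ are collinear. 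Your key move---using that $m_1,m_2,m_3$ are an eigenbasis, hence linearly independent, to rule out the spurious non-real solutions of the collinearity condition---is exactly what replaces the second triple ratio that was available in Lemma~\ref{pglthreeHH}, and it makes transparent \emph{why} one triple ratio suffices here. The paper's computation hides this mechanism inside the special normalization of $m_1,m_3$ (in which $\spn{m_2,\conj{m_2}}$ can never contain $[i,1,0]$ once the third coordinate of $m_2$ is nonzero). Your genericity checks ($p_1p_3\neq 0$, $L_2(m_2)\neq 0$, $L_2(\conj{m_2})\neq 0$) are all justified by the stated generic-position hypotheses on $A,\beta,C,D$ and $A,\beta',C,D$, so the argument is complete.
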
 
\begin{proof}
The cross ratios being complex conjugate if and only if $m_1$ and $m_3$ are compatible with $M$ being also simultaneously conjugate into $\pgl{3, \R}$ follows directly from the computations of cross ratio above, as the cross ratios are in complex conjugate pairs if and only if the components of $m_1$ and $m_3$ are complex conjugates. 

The triple ratios of elliptics is more interesting. Note that the cross ratios give us more information, so we can essentially normalize more than we would be able to otherwise. Normalize so that the eigendirections of the hyperbolic element $G$ are projectively real and denote the line and plane parts of the associated flags by the direction vectors and linear functionals 
\begin{align*}
    v_A &= \spn{[r_1, s_1, t_2]} & f_A &= [x_1, y_1, z_1]^* \\
    v_C &= \spn{[r_2, s_2, t_2]} & f_C &= [x_2, y_2, z_2]^* 
\end{align*}

Normalize so that the elliptic projective transformation $M$ has eigendirections $m_1 = i \cdot e_1 + e_2, m_3 = -i \cdot e_1 + e_2$. Denote the remaining eigendirection of $M$ by $[a, b, 1]$, where the third component is necessarily nonzero to be a valid triple of eigendirections for $M$. Then we have flags $\beta$ and $\beta'$ with
\begin{align*}
    v_\beta &= \spn{[i, 1, 0]} & f_\beta &= [1, -i, -a + bi]^* \\
    v_{\beta'} &= \spn{[-i, 1, 0]} & f_{\beta'} &= [1, i, -a - bi]^*
\end{align*}

\begin{figure}[H]
\begin{tikzpicture}[scale=0.75]
    \draw[dashed] (-2, 2) -- (2.5, 2.5);
    \draw[dashed] (-1, 2.5) -- (-2, -2.5);
    \draw[dashed] (-2.5, -2) -- (3, -2.5);
    \draw[dashed] (2, 3) -- (2.5, -3);
    \draw (0.25, 2.25) -- (2.25, 0) -- (0.25, -2.25) -- (-1.5, 0) -- (0.25, 2.25);
    \draw (0.25, 2.25) -- (0.25, -2.25);
    \draw[fill=black] (0.25, 2.25) circle (2pt);
    \draw[fill=black] (0.25, -2.25) circle (2pt);
    \draw[fill=black] (2.25, 0) circle (2pt);
    \node[above] at (0.25, 2.25) {$A_1$};
    \node[below] at (0.25, -2.25) {$C_1$};
    \node[right] at (2.25, 0) {$\color{red} \spn{m_1} = \spn{i \cdot e_1 + e_2}$};
    \node[right] at (2.5, 2.5) {$A_2$};
    \node[right] at (3, -2.5) {$C_2$};
    \node[below right] at (2.5, -3) {$\spn{m_1} \oplus \spn{m_2}$};
    \draw[fill=black] (1, 0) circle (2pt);
    \node[right] at (1, 0) {$s$};
\end{tikzpicture}
\begin{tikzpicture}[scale=0.75]
    \draw[dashed] (-2, 2) -- (2.5, 2.5);
    \draw[dashed] (-1, 2.5) -- (-2, -2.5);
    \draw[dashed] (-2.5, -2) -- (3, -2.5);
    \draw[dashed] (2, 3) -- (2.5, -3);
    \draw (0.25, 2.25) -- (2.25, 0) -- (0.25, -2.25) -- (-1.5, 0) -- (0.25, 2.25);
    \draw (0.25, 2.25) -- (0.25, -2.25);
    \draw[fill=black] (0.25, 2.25) circle (2pt);
    \draw[fill=black] (0.25, -2.25) circle (2pt);
    \draw[fill=black] (2.25, 0) circle (2pt);
    \node[above] at (0.25, 2.25) {$A_1$};
    \node[below] at (0.25, -2.25) {$C_1$};
    \node[right] at (2.25, 0) {$\color{red} \spn{m_3} = \spn{-i \cdot e_1 + e_2}$};
    \node[right] at (2.5, 2.5) {$A_2$};
    \node[right] at (3, -2.5) {$C_2$};
    \node[below right] at (2.5, -3) {$\spn{m_3} \oplus \spn{m_2}$};
    \draw[fill=black] (1, 0) circle (2pt);
    \node[right] at (1, 0) {$s$};
\end{tikzpicture}
\caption{$\pgl{3, \R}$ triple ratios for an elliptic with $n \geq 2$ hyperbolics}
\end{figure}
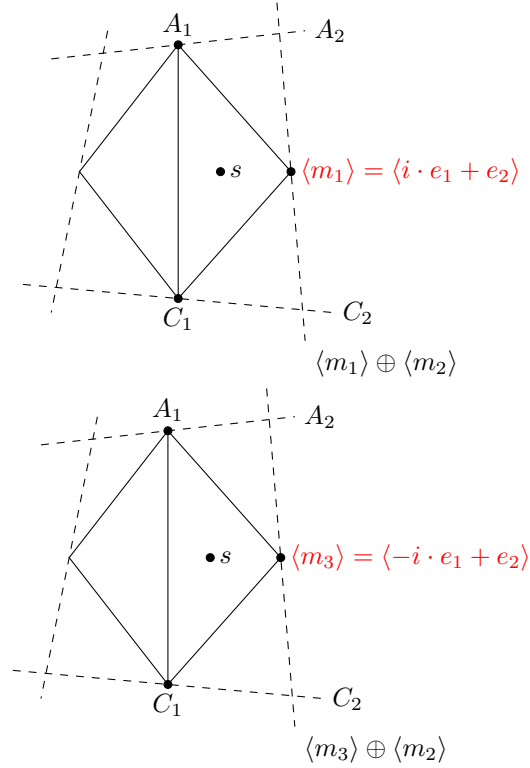

Then the triple ratios are
\begin{align*}
    r_3(A, \beta, C) &= \frac{f_A (v_\beta) \; f_\beta (v_C) \; f_C (v_A)}{f_A (v_C) \; f_\beta (v_A) \; f_C (v_\beta)} \\
    &= \frac{(x_1 i + y_1) \; f_\beta (e_2) \; (r_1 x_2 + s_1 y_2 + t_1 z_2)}{(x_1 r_2 + y_1 s_2 + z_1 t_2) \; f_\beta (e_1) \; (x_2 i + y_2)} \\
    &= \frac{(x_1 i + y_1) \; (r_1 x_2 + s_1 y_2 + t_1 z_2)}{(x_1 r_2 + y_1 s_2 + z_1 t_2) \; (x_2 i + y_2)} \cdot \frac{(r_2 - i s_2 + t_2 (-a + bi))}{(r_1 - i s_1 + t_1 (-a + b i))}
\end{align*}
and
\begin{align*}
    r_3(A, \beta', C) &= \frac{f_A (v_{\beta'}) \; f_{\beta'} (v_C) \; f_C (v_A)}{f_A (v_C) \; f_{\beta'} (v_A) \; f_C (v_{\beta'})} \\
    &= \frac{(-x_1 i + y_1) \; f_{\beta'} (e_2) \; (r_1 x_2 + s_1 y_2 + t_1 z_2)}{(x_1 r_2 + y_1 s_2 + z_1 t_2) \; f_{\beta'} (e_1) \; (-x_2 i + y_2)} \\
    &= \frac{(-x_1 i + y_1) \; (r_1 x_2 + s_1 y_2 + t_1 z_2)}{(x_1 r_2 + y_1 s_2 + z_1 t_2) \; (-x_2 i + y_2)} \cdot \frac{(r_2 + i s_2 + t_2 (-a - bi))}{(r_1 + i s_1 + t_1 (-a - b i))}
\end{align*}

If these triple ratios are complex conjugates of each other, then since all of $r_i, s_i, t_i$ and $x_i, y_i, z_i$ are all real, we have
\begin{align*}
    \frac{(r_2 - i s_2 + t_2 (-a + bi))}{(r_1 - i s_1 + t_1 (-a + b i))} 
    &= \conj{\frac{(r_2 + i s_2 + t_2 (-a - bi))}{(r_1 + i s_1 + t_1 (-a - b i))}} \\
    &= \frac{(r_2 - i s_2 + t_2 (-\conj{a} + \conj{b} i))}{(r_1 - i s_1 + t_1 (-\conj{a} + \conj{b} i))} 
\end{align*}
which simplifies to $(a - \conj{a}) + (\conj{b} - b)i = 0$ when the flags are in generic position. Since $a - \conj{a} = 2i \; \im(a)$ and $\conj{b} - b = -2i \; \im(b)$, and in particular $a - \conj{a}$ is strictly imaginary and $(\conj{b} - b) i$ is strictly real, we have $a, b \in \R$. Hence, the last eigendirection of $M$ is projectively real, and so $M$ is simultaneously conjugate into $\pgl{3, \R}$. 

\end{proof}

We summarize this section by combining the previous lemmas into a single theorem. 

\begin{restatable}{theorem}{pglthreeOne}
If $\set{H_1, \cdots, H_m, E_1, \cdots, E_n}$ is a collection of $m \geq 2$ hyperbolic and $n \geq 0$ elliptic transformations in $\pgl{3, \C}$, each without repeat eigenvalues, where $H_1$ and $H_2$ have eigendirection flag pairs $A, C$ and $B, D$ such that each remaining element $M$ in the collection has an eigendirection flag pair $\beta, \beta'$ with $A, \beta, C, D$ in generic position and $A, \beta', C, D$ in generic position, then the collection is simultaneously conjugate into $\pgl{3, \R}$ if and only if
\end{restatable}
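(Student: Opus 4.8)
The plan is to reduce the entire statement to Corollary \ref{conjIFFrForm}: the collection is simultaneously conjugate into $\pgl{3, \R}$ if and only if every generator preserves one common projective $\R$-form. The genericity hypotheses (that each transformation has $\pgl{3, \R}$ compatible generic eigenvalues and that the listed flag quadruples are in generic position), together with Corollary \ref{uniqueCorollary}, guarantee that at most one projective $\R$-form could be preserved by any collection containing $H_1$ and $H_2$. So it suffices to (i) produce that candidate form from $H_1, H_2$ and (ii) test each remaining generator against it.

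First I would treat $H_1$ and $H_2$. By Lemma \ref{pglthreeHH}, these two are simultaneously conjugate into $\pgl{3, \R}$ exactly when the two cross ratios coming from $[A, B, C, D]$ are real and the triple ratios $r_3(A, B, C)$ and $r_3(A, C, D)$ are real; these are the first two families of conditions. When they hold, the proof of Lemma \ref{pglthreeHH} produces an explicit basis — the one in which $g_1 = e_1$, $g_2 = e_3$, $g_3 = e_2$ and $h_3 = e_1 + e_2 + e_3$ — with respect to which the preserved projective $\R$-form is the standard $\R P^2 \subseteq \C P^2$. By Corollary \ref{uniqueCorollary} this is the \emph{only} projective $\R$-form any larger collection containing $H_1, H_2$ can preserve, so every subsequent test may be run in this single fixed normalization.

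Next I would run through the remaining generators one at a time. For a hyperbolic $H_i$ with $i > 2$ and flag pair $\beta, \beta'$, Corollary \ref{pglthreeHHH} says $H_i$ preserves the standard $\R P^2$ precisely when the four cross ratios coming from $[A, \beta, C, D]$ and $[A, \beta', C, D]$ are real and the triple ratios $r_3(A, \beta, C)$ and $r_3(A, \beta', C)$ are real. For an elliptic $E_j$ with flag pair $\beta, \beta'$, Lemma \ref{pglthreeHHE} — whose standing hypothesis that $H_1, H_2$ be simultaneously conjugate into $\pgl{3, \R}$ is now in force — says $E_j$ preserves the same form precisely when $[A, \beta, C, D] = \conj{[A, \beta', C, D]}$ after both quotients and $r_3(A, \beta, C) = \conj{r_3(A, \beta', C)}$. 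Intersecting these conditions over all generators and invoking Corollary \ref{conjIFFrForm} gives the claimed equivalence; the reverse implication is immediate, since each invoked lemma is itself an ``if and only if.''

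The main obstacle, and the step that deserves genuine care, is verifying that all the pairwise and triple-wise tests are testing against the \emph{same} projective $\R$-form. This is exactly where the generic-eigenvalue hypothesis is essential: without it the eigendirections of $H_1, H_2$ could lift into the spans of disjoint subsets of a basis, and by Theorem \ref{uniqueness} the collection would then preserve either zero or infinitely many forms, so a generator could be compatible with one preserved form of $\{H_1, H_2\}$ while another generator is compatible with a different one, and the conjunction of the local conditions would no longer force simultaneous conjugacy of the whole collection. Under our hypotheses Corollary \ref{uniqueCorollary} rules this out, collapsing ``compatible with some common form of $H_1, H_2$ and $M$'' to ``preserves the unique form fixed in the normalization'' — which is precisely what makes the conjunction over all generators equivalent to simultaneous conjugacy.
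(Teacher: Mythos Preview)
Your proposal is correct and matches the paper's approach: the paper presents this theorem explicitly as a summary obtained by combining Lemma \ref{pglthreeHH}, Corollary \ref{pglthreeHHH}, and Lemma \ref{pglthreeHHE}, and gives no separate proof. Your write-up does exactly this, and in fact supplies more detail than the paper --- in particular your use of Corollary \ref{uniqueCorollary} to pin down the unique candidate projective $\R$-form, so that the per-generator tests are all run against the same form, makes explicit a point the paper leaves implicit.
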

\begin{enumerate}
    \item the two cross ratios coming from $[A, B, C, D]$ are real
    \item the two triple ratios $r_3(A, B, C)$ and $r_3(A, C, D)$ are real
\end{enumerate}
for each subsequent hyperbolic element $M$ with eigendirection flag pair $\beta, \beta'$
\begin{enumerate}[resume]
    \item the two cross ratios coming from $[A, \beta, C, D]$ are real
    \item the two cross ratios coming from $[A, \beta', C, D]$ are real
    \item the two triple ratios $r_3(A, \beta, C)$ and $r_3(A, \beta', C)$ are real
\end{enumerate}
and for each elliptic element $M$ with eigendirection flag pair $\beta, \beta'$
\begin{enumerate}[resume]
    \item the pair of cross ratios coming from $[A, \beta, C, D]$ are complex conjugates of the pair of cross ratios coming from $[A, \beta', C, D]$
    \item $r_3(A, \beta, C) = \conj{r_3(A, \beta', C)}$
\end{enumerate}

\subsection{\texorpdfstring{$n$}{n} Elliptic Transformations}

We would like to use the same method as with the hyperbolics, since the quotients and resulting cross ratios are convenient. Unfortunately, though $k+1$ hyperbolic eigendirections determines a projective $\R$ form which might be commonly preserved, the same is not true for elliptic eigendirections. This can be seen by considering the $\pgl{2, \C}$ case and placing an eigendirection pair at $0, \infty$ and a third eigendirection at $1$. If these were hyperbolic eigendirections, the corresponding hyperbolic transformations could only commonly preserve the projective $\R$ form coming from $\R^2 \subseteq \C^2$. On the other hand, if these are elliptic eigendirections, then any circle centered at the origin could be commonly preserved. Fortunately, we can pretend that we have two such hyperbolics and still achieve rigorous results. 

Choose two elliptic transformations $A$ and $B$. Normalize so that the elliptic eigendirections of $A$ are $[i, 1, 0]$ and $[-i, 1, 0]$, the hyperbolic eigendirection of $A$ is $[0, 0, 1]$, and the hyperbolic eigendirection of $B$ is $[1, 1, 1$]. Because of genericity of flags, we can make this normalization, but in higher dimensions we will need to be more clever. 

In particular, the collection $\set{[i, 1, 0], [-i, 1, 0], [0, 0, 1], [1, 1, 1]}$ forms a projective frame, so there is a unique corresponding conjugation action and therefore a unique projective $\R$ form preserved. Since inversion about the standard projective $\R$ form acts appropriately on the projective frame, it is therefore the unique projective $\R$ form which is preserved. 

We can thus take computations as if we have two hyperbolic transformations: $G$ with eigendirections $[1, 0, 0], [0, 1, 0], [0, 0, 1]$ and $H$ with an eigendirection $[1, 1, 1]$. Then the cross ratios and triple ratios are precisely as in the case where there are $m \geq 2$ hyperbolic transformation. Note that we do not need to take any coordinates with $G$ and that with $H$ we need only two cross ratios and two triple ratios to determine the remaining eigendirections (the same number of coordinates as when we started with two hyperbolic transformations). We will explain in more detail in the next section how to do these computations. 

\section{Fock-Goncharov Coordinates for Conjugation into \texorpdfstring{$\pgl{k, \C}$}{PGL(k, C)}}

In $\C P^2$, we were able to quotient by a codimension $1$ subspace to produce a copy of $\C P^1$, which we could then take cross ratios in. We will do the same in higher dimensions and, similarly, we are able to quotient by a codimension $2$ subspace in $\C P^{k-1}$ to produce a copy of $\C P^2$ which we can take triple ratios in. 

Our goal is to find a projective $\R$ form that is compatible with (and possibly determined by) $k+1$ of the eigendirections from a collection of projective transformations. We will first describe how to find that projective $\R$ form and then describe how to produce coordinates from the collection of projective transformations. We note, however, that finding flags in generic position in higher dimensions becomes increasing less likely. For flags $A, B, C$ in $\C^2$ to be in generic position, we simply require $A_1 \cap B_1 = \set{\vec{0}}$, $A_1 \cap C_1 = \set{\vec{0}}$, and $B_1 \cap C_1 = \set{\vec{0}}$. In higher dimensions, there become more possible obstructions to being in generic position because there are more components of flags that can intersect non trivially. Said another way, starting with two flags $A, B$ in $\C^2$, there are only two potential flags which are not in generic position with both $A$ and $B$, namely $A$ and $B$ themselves. In $\C^3$, given flags $A, B$ in generic position, there is a $\C$ worth of flags $C$ in $\C^2$ which are not in generic position with $A, B$. As a result, most collections of projective transformations one might encounter would have flags which are not in generic position. In the $\pgl{k, \C}$ case, we will outline some results similar to the lower dimensions utilizing flags of eigendirections in generic position, but then will present an alternative style of genericity which allows further use of cross ratios. We note that this alternative genericity condition we will use does not imply the standard generic position condition nor vice versa.  

For each computation, we will begin by normalizing so that we have flags
\begin{align*}
    A &= \spn{e_1} \subseteq \spn{e_1} \oplus \spn{e_2} \subseteq \cdots \subseteq \spn{e_1} \oplus \spn{e_2} \oplus \cdots \oplus \spn{e_k} \\
    C &= \spn{e_k} \subseteq \spn{e_k} \oplus \spn{e_{k-1}} \subseteq \cdots \subseteq \spn{e_k} \oplus \spn{e_{k-1}} \oplus \cdots \oplus \spn{e_1}
\end{align*}
and will only use the one dimensional part of the flag $D$, so normalize so that
    $$D_1 = \spn{e_1 + e_2 + \cdots + e_k}$$
These are based on normalizations of the eigenbasis of a transformation and one other eigendirection in generic position, regardless of if the eigendirection are elliptic or hyperbolic. If there exists any transformation with eigendirection flag pair $F, F'$ and any other eigendirection from the collection which can be used as the direction vector for some flag $G_1$ such that $F, F', G_1$ are in generic position, then we describe in the next section how to use those vectors to normalize such that we have the desired flags $A, C, D_1$ above. 

If such flags do not exist, then as in section 3.2, a basis associated to a projective $\R$ form which is preserved by the collection of transformations is some set $\set{v_1, \cdots, v_k}$ such that for every hyperbolic eigendirection $p$ we have
    $$p = \sum\limits_{j=1}^k \lambda_j v_j \text{ for } \lambda_j \in \R \text{ not all 0}$$
and for elliptic eigendirection pairs $p, p'$ we have
    $$p = \sum\limits_{j=1}^k \lambda_j v_j \text{ for } \lambda_j \in \C \text{ not all 0}$$
with
    $$\conj{p} = \sum\limits_{i=1}^k \conj{\lambda_j v_j} = \sum\limits_{i=1}^k \conj{\lambda_j} v_j' = p'$$
where
    $$v_j' = 
    \begin{cases}
        v_j & \text{if hyperbolic} \\
        \text{its eigendirection pair} & \text{if elliptic}
    \end{cases}$$
In that case, we can perform the linear algebra to again find flags $A, C, D_1$ as above, now created from nonexistent eigendirections which describe the candidate preserved projective $\R$ form. 

\subsection{Computing Cross Ratios}
To take cross ratios of four flags $A, \beta, C, D$ in generic position, we project the flags to $\C^k / \left( A_i \oplus C_j \right)$ for $i + j = k-2$ with $i, j \geq 0$ to get points in $\C P^1$. Because the flags are in generic position, we can modify them as
\begin{align*}
    A &\rightarrow A_{i+1} \oplus C_j \\
    \beta &\rightarrow \left( A_i \oplus C_j \right) \oplus \beta_1 \\
    C &\rightarrow A_i \oplus C_{j+1} \\
    D &\rightarrow \left( A_i \oplus C_j \right) \oplus D_1
\end{align*}
to get copies of $\C^k$ and then quotient by $A_i \oplus C_j$. As above, normalize so that
\begin{align*}
    A &= \spn{e_1} \subseteq \spn{e_1} \oplus \spn{e_2} \subseteq \cdots \subseteq \spn{e_1} \oplus \spn{e_2} \oplus \cdots \oplus \spn{e_k} \\
    C &= \spn{e_k} \subseteq \spn{e_k} \oplus \spn{e_{k-1}} \subseteq \cdots \subseteq \spn{e_k} \oplus \spn{e_{k-1}} \oplus \cdots \oplus \spn{e_1} \\
    D_1 &= \spn{e_1 + e_2 + \cdots + e_k}
\end{align*}
When we quotient $\beta$ by $A_i \oplus C_j$, we are left with the $i+1$ and $i+2$ components of the vector portion of $\beta$. In other words, if the vector portion of the flag $\beta$ is the eigendirection
    $$m_1 = [a_1, a_2, \cdots, a_k]$$
then when we quotient by $A_i \oplus C_j$ we have
    $$m_1 \mapsto [a_{i+1}, a_{i+2}]$$
Similarly, the flags $A, C,$ and $D$ will become
\begin{align*}
    A &\mapsto [1, 0] \\
    C &\mapsto [0, 1] \\
    D &\mapsto [1, 1]
\end{align*}

The cross ratios will therefore be
    $$\left[ \infty, \frac{a_{i+1}}{a_{i+2}}, 0, 1 \right] = \frac{(\infty - 1)(0 - \frac{a_{i+1}}{a_{i+2}})}{(\infty - \frac{a_{i+1}}{a_{i+2}})(0 - 1)} = \frac{a_{i+1}}{a_{i+2}}$$

This means that each of the $k-1$ cross ratios gives the ratio of two components of a single eigendirection. If the flags are in generic position, then the collection of $k-1$ cross ratios will fully determine (projectively) the one dimensional part of the flag $\beta$. Then we have the following lemmas.

\begin{restatable}{lemma}{pglKHH}
Let $G$ and $H$ be hyperbolic transformations in $\pgl{k, \C}$ without repeated eigenvalues, with flag pairs $A, C$ and $B, D$ respectively, and where $A, B, C, D$ are in generic position. Then $B_1$ and $D_1$ are compatible with a projective $\R$ form preserved by $G$ if and only if $[A, B, C, D] \in \R$ for each of the $k-1$ projections. 
\end{restatable}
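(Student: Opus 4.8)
The plan is to reduce to the normal form set up in the previous subsection and then read the statement off the cross-ratio computation done there. First I would use the $\pgl{k,\C}$-action to normalize so that $A,C$ are the standard flag pair,
\begin{align*}
A &= \spn{e_1} \subseteq \spn{e_1}\oplus\spn{e_2} \subseteq \cdots \subseteq \spn{e_1}\oplus\cdots\oplus\spn{e_k},\\
C &= \spn{e_k} \subseteq \spn{e_k}\oplus\spn{e_{k-1}} \subseteq \cdots \subseteq \spn{e_k}\oplus\cdots\oplus\spn{e_1},
\end{align*}
and $D_1 = \spn{e_1+\cdots+e_k}$. This is legitimate because $A,B,C,D$ in generic position forces $\spn{e_1},\dots,\spn{e_k},D_1$ to be a projective frame. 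Both sides of the claimed equivalence are invariant under this change of basis: cross ratios are conjugation invariants, and as observed in the proof of Theorem \ref{conjugationTheorem} a change of basis carries projective $\R$ forms preserved by $G$ to projective $\R$ forms preserved by the conjugate of $G$.

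Next I would identify the unique candidate. Since $G$ is hyperbolic, its eigendirections $\spn{e_1},\dots,\spn{e_k}$ are hyperbolic, and $D_1$ is a hyperbolic eigendirection of $H$; so by the forward direction of Theorem \ref{conjugationTheorem} any projective $\R$ form $R$ preserved by $G$ whose associated conjugation also fixes $D_1$ must fix all of $\spn{e_1},\dots,\spn{e_k},D_1$. The only conjugation fixing these $k+1$ directions (which form a projective frame) is, up to the irrelevant $S^1/S^0$ rescaling, componentwise complex conjugation in the standard basis --- this is exactly the content of our earlier determination of the only projective $\R$ form compatible with a given collection of $k+1$ eigendirections, and is consistent with Corollary \ref{uniqueCorollary}. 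Hence the only projective $\R$ form preserved by $G$ on which $D_1$ lies is $\R P^{k-1}\subseteq\C P^{k-1}$ in the standard basis, and ``$B_1,D_1$ compatible with a projective $\R$ form preserved by $G$'' is equivalent to ``$B_1\in\R P^{k-1}$'', i.e.\ $B_1$ is projectively real.

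Then I would finish with the coordinate bookkeeping. Writing $B_1 = \spn{[a_1,\dots,a_k]}$, generic position forces $a_j\ne 0$ for every $j$: the subspace $A_{j-1}\oplus C_{k-j}$ is the span of all standard basis vectors except $e_j$, and $B_1$ meets it only in $\set{\vec 0}$. Therefore $B_1$ is projectively real if and only if each consecutive ratio $a_{i+1}/a_{i+2}$, $i=0,\dots,k-2$, is real. But the computation in the previous subsection shows that the $i$-th of the $k-1$ cross ratios from $[A,B,C,D]$, taken after quotienting by $A_i\oplus C_j$ with $i+j=k-2$, equals exactly $a_{i+1}/a_{i+2}$. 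Chaining the equivalences: all $k-1$ projections of $[A,B,C,D]$ are real $\iff B_1$ is projectively real $\iff$ there is a projective $\R$ form preserved by $G$ containing both $B_1$ and $D_1$.

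The only nonroutine point is the uniqueness step in the second paragraph: a single hyperbolic element of $\pgl{k,\C}$ in dimension $k$ preserves a whole family of projective $\R$ forms, so one has to check that the additional incidence with $D_1$ cuts this family down to the single form $\R P^{k-1}$; this is where the genericity hypotheses and the structure of the projective frame $\spn{e_1},\dots,\spn{e_k},D_1$ do the real work. Everything after that is substitution into the already-computed cross-ratio formula.
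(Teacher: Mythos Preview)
Your proposal is correct and follows essentially the same approach as the paper: normalize so that $A,C$ come from the standard basis and $D_1=\spn{e_1+\cdots+e_k}$, observe that the unique candidate projective $\R$ form is then the standard one, and conclude that $B_1$ lies in it precisely when the consecutive ratios $a_{i+1}/a_{i+2}$ (i.e.\ the $k-1$ cross ratios) are real. Your write-up is more explicit than the paper's about the uniqueness step and the genericity check that each $a_j\neq 0$, but the underlying argument is the same.
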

\begin{proof}
By genericity of the flags, the eigendirections of $G$ together with $D_1$ determine a unique projective $\R$ form which might be preserved. If we normalize so that the eigenbasis of $G$ is the standard basis and $D_1$ is the sum of vectors in the standard basis, this projective $\R$ form is that coming from $\R^k \subseteq \C^k$. Thus, $B_1$ is also in that projective $\R$ form if and only if the ratios of every pair of components is real, which occurs precisely when each of the cross ratios is real. Note that none of the coordinates may be 0, because the flags are assumed to be in generic position. 

\end{proof}

\begin{restatable}{lemma}{pglKHHM}
Given two hyperbolic transformations without repeated eigenvalues which are simultaneously conjugate into $\pgl{k, \R}$ and have flag pairs $A, C$ and $B, D$, then for a transformation $M$ with flags $\beta, \beta'$ coming from eigendirections $m_1, \cdots, m_k$ such that $A, \beta, C, D$ are in generic position and $A, \beta', C, D$ are in generic position:
\begin{enumerate}
    \item $m_1$ and $m_k$ are an elliptic eigendirection pair compatible with the projective $\R$ form preserved by $G$ and $H$ if and only if $[A, \beta, C, D] = \conj{[A, \beta', C, D]}$ for each of the $k-1$ projections
    \item $m_1$ is a hyperbolic eigendirection compatible with the projective $\R$ form preserved by $G$ and $H$ if and only if the cross ratios $[A, \beta, C, D]$ are real for each projection. Similarly, $m_k$ is a hyperbolic eigendirection compatible with the projective $\R$ form preserved by $G$ and $H$ if and only if the cross ratios coming from $[A, \beta', C, D]$ are real. 
\end{enumerate}
\end{restatable}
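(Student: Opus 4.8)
The plan is to reuse the normalization of the previous subsection together with the cross-ratio formula established there. Since $G$ and $H$ are simultaneously conjugate into $\pgl{k, \R}$, by Corollary \ref{conjIFFrForm} they preserve a common projective $\R$ form $R$, and, being hyperbolic, all of their eigendirections lie on $R$. The $k$ eigendirections of $G$ together with the one eigendirection of $H$ underlying $D_1$ are $k+1$ points of $R$, no $k$ of which lie in a hyperplane (linear independence of $G$'s eigenvectors handles the first $k$; generic position of $A, C, D$, which follows from that of $A, \beta, C, D$, handles the rest), so they form a projective frame for $R$. Conjugating the whole collection by the matrix carrying an associated basis of this frame to the standard basis puts $A$ at the standard flag, $C$ at the reversed standard flag, $D_1 = \spn{e_1 + \cdots + e_k}$, and, since these $k+1$ eigendirections are hyperbolic and hence fixed by the conjugation attached to $R$, identifies $R$ with $\R P^{k-1}$ — this is exactly the content of the theorem determining the preserved projective $\R$ form, applied to the $k+1$ eigendirections. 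In particular the conjugation of $R$ becomes componentwise complex conjugation in this basis.

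Next I write the one-dimensional part of $\beta$ as $m_1 = [a_1, \dots, a_k]$ and that of $\beta'$ as $m_k = [c_1, \dots, c_k]$. By the cross-ratio computation of the previous subsection, projecting $A, \beta, C, D$ to $\C^k / (A_i \oplus C_j)$ for each $i, j \geq 0$ with $i + j = k - 2$ gives the four points $[1,0]$, $[a_{i+1}, a_{i+2}]$, $[0,1]$, $[1,1]$ in $\C P^1$, so the corresponding cross ratio of $[A, \beta, C, D]$ equals $a_{i+1}/a_{i+2}$; identically, the corresponding cross ratio of $[A, \beta', C, D]$ equals $c_{i+1}/c_{i+2}$. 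Generic position of $A, \beta, C, D$ forces every $a_j$ to be nonzero and generic position of $A, \beta', C, D$ forces every $c_j$ to be nonzero, so these $k-1$ ratios of consecutive components determine $m_1$, respectively $m_k$, as a point of $\C P^{k-1}$.

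For part (2): $m_1$ is a hyperbolic eigendirection compatible with the preserved form $\R P^{k-1}$ exactly when it is fixed by componentwise complex conjugation, which (since by the lemma on conjugate representatives this conjugation is well defined on $\C P^{k-1}$) happens exactly when $m_1$ admits a representative with all real entries, which, every $a_j$ being nonzero, happens exactly when all the ratios $a_{i+1}/a_{i+2}$ are real, i.e. when all $k-1$ cross ratios of $[A, \beta, C, D]$ are real. The identical argument applied to $m_k$ and $\beta'$ proves the statement for $m_k$.

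For part (1): the pair $\set{m_1, m_k}$ is an elliptic eigendirection pair compatible with $\R P^{k-1}$ exactly when the conjugation of $\R P^{k-1}$ swaps $m_1$ and $m_k$, i.e. when $m_k \sim \conj{m_1}$ in $\C P^{k-1}$. Since a point with all nonzero components is determined by its $k-1$ ratios of consecutive components, and $\conj{m_1}$ has those ratios equal to $\conj{a_{i+1}/a_{i+2}}$ while $m_k$ has them equal to $c_{i+1}/c_{i+2}$, the condition $m_k \sim \conj{m_1}$ is equivalent to $c_{i+1}/c_{i+2} = \conj{a_{i+1}/a_{i+2}}$ for every $i$, that is, to $[A, \beta', C, D] = \conj{[A, \beta, C, D]}$ after each of the $k-1$ projections; and since conjugation is an involution, ``swaps $m_1$ and $m_k$'' is symmetric in $m_1, m_k$, so no further condition is needed. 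The computations here are entirely routine; the one step that genuinely requires care is the first paragraph, the identification of the common projective $\R$ form preserved by $G$ and $H$ with $\R P^{k-1}$ after normalization — it is this identification that lets ``lies on the preserved $\R$ form'' be read off directly as ``has real consecutive coordinate ratios.''
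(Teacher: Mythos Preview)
Your proof is correct and follows essentially the same approach as the paper: normalize so that $A,C$ come from the standard basis and $D_1=\spn{e_1+\cdots+e_k}$, identify the preserved projective $\R$ form with $\R P^{k-1}$, read the cross ratios as ratios $a_{i+1}/a_{i+2}$ (respectively $c_{i+1}/c_{i+2}$) of consecutive coordinates, and translate ``compatible hyperbolic'' into ``all ratios real'' and ``compatible elliptic pair'' into ``ratios are complex conjugates.'' Your write-up is in fact more careful than the paper's in justifying why the normalization carries the preserved $\R$ form to $\R P^{k-1}$ and why genericity forces all coordinates of $m_1,m_k$ to be nonzero.
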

\begin{proof}
Normalizing so that $A, C$ is the flag pair coming from the standard basis and $D_1$ is the span of the sum of the standard basis vectors, the elliptic eigendirection condition follows directly from the ratio of successive terms in componentwise complex conjugate eigendirections being complex conjugate. Let $[A, \beta, C, D] = \frac{a_{i+1}}{a_{i+2}}$ when quotienting by $A_i \oplus C_j$ and $[A, \beta', C, D] = \frac{b_{i+1}}{b_{i+2}}$. Normalize each eigendirection so that its last component, which is nonzero by genericity, is 1. Then the sequence of cross ratios being complex conjugate implies the sequence of products of cross ratios are complex conjugate. This sequence of products of cross ratios reads successive terms, so since the eigendirections are compatible with a real projective transformation if and only if there exists representatives such that their components are complex conjugate, we have the condition $[A, \beta', C, D] = \frac{\conj{a_{i+1}}}{\conj{a_{i+2}}}$.

The hyperbolic eigendirection condition is equivalent to the ratio of every pair of successive components of the eigendirection being real. As the flags are assumed to be in generic position, none of the components may be 0, so this means that the ratio of every pair of components is real which occurs if and only if the eigendirection is projectively real and thus is compatible with being conjugate into $\pgl{k, \R}$. 

\end{proof}

\begin{restatable}{lemma}{pglKEE}
Let $G$ and $H$ be elliptic transformations in $\pgl{k, \C}$ with eigendirection flag pairs $A, C$ and $B, D$, where $A, B, C, D$ are in generic position and $A$ (similarly $C$) is constructed in a non-standard fashion as $\spn{v_1} \subseteq \spn{v_1} \oplus \spn{v_2} \subseteq \cdots$ with each $v_{2n+1}$ and $v_{2n+2}$ being an elliptic eigendirection pair. Let $B_1$ and $D_1$ be an elliptic eigendirection pair, as is our standard for flags. Then $B_1$ and $D_1$ are compatible with a projective $\R$ form preserved by $G$ if and only if 
    $$\frac{[A^{2n+1,k-3-2n}, B^{2n+1,k-3-2n}, C^{2n+1,k-3-2n}, D^{2n+1,k-3-2n}]}
    {[A^{2n+2,k-4-2n}, B^{2n+2,k-4-2n}, C^{2n+2,k-4-2n}, D^{2n+2,k-4-2n}]} \in \R^+$$
for every $0 \leq 2n < k$.
\end{restatable}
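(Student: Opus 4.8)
The approach is to reprove the $\pgl{2,\C}$ two-elliptic statement in the quotient setting: normalize, make ``preserved projective $\R$-form'' completely explicit via Theorem~\ref{conjugationTheorem} and Corollary~\ref{uniqueCorollary}, and then recognize the resulting condition inside the prescribed ratios of cross ratios. First I would normalize. Since $A,B,C,D$ are in generic position and the eigendirections $v_1,\dots,v_k$ of $G$ form a basis, choose coordinates so that $A$ is the coordinate flag $\spn{e_1}\subseteq\spn{e_1}\oplus\spn{e_2}\subseteq\cdots$, hence $C$ the reversed coordinate flag, and, using the remaining torus freedom, $D_1=\spn{e_1+\cdots+e_k}$; because of the non-standard order of $A$, the $n$-th elliptic pair of $G$ spans the $2$-plane $P_n=\spn{e_{2n+1},e_{2n+2}}$ and $\C^k=\bigoplus_n P_n$. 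Write $B_1=[a_1,\dots,a_k]$. With this normalization the computation of the ``Computing Cross Ratios'' subsection applies: quotienting by $A_i\oplus C_j$ sends $A,C,D$ to three fixed points of $\C P^1$ and $B_1$ to $[a_{i+1}:a_{i+2}]$, so each $[A^{i,j},B^{i,j},C^{i,j},D^{i,j}]$ is a fixed M\"obius function of $a_{i+1}/a_{i+2}$, and the prescribed quotient of the level-$(2n+1)$ and level-$(2n+2)$ cross ratios is an explicit rational function of the coordinates of $B_1$ attached to the block $P_n$.

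Second, I would make ``preserved by $G$'' explicit. Since $G$ is strictly elliptic with distinct, $\pgl{k,\R}$-compatible eigenvalues, Theorem~\ref{conjugationTheorem} gives that a projective $\R$-form is $G$-invariant exactly when its conjugation $\sigma$ swaps the two members of each elliptic pair of $G$; such a $\sigma$ preserves each $P_n$ (it sends each eigendirection in $P_n$ to the other), so $\sigma=\bigoplus_n\sigma_n$ with $\sigma_n$ a conjugation of $P_n$ swapping that pair, i.e. on $\C P^1$ the reflection across a circle about which the pair is inverted, a one-real-parameter family of ``radii''. Thus $B_1,D_1$ are compatible with a $G$-preserved projective $\R$-form precisely when some single $\sigma=\bigoplus_n\sigma_n$ satisfies $\sigma(D_1)\sim B_1$, equivalently when, for each $n$, the $P_n$-block of $B_1$ is obtained from the $P_n$-block of $D_1$ by one of these inversions.

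Third, I would match the two descriptions. Applying $\sigma_n$ to the $P_n$-block of $D_1$ (which, after the normalization $D_1=[1:\dots:1]$, is $(1,1)$) and comparing with the $P_n$-block of $B_1$ shows that such a $\sigma_n$ exists iff a certain combination of $a_{2n+1},a_{2n+2}$ (and their neighbours, contributed by the $A$- and $C$-pieces of the flags) is a positive real; this is exactly the prescribed quotient of cross ratios, since forming the quotient cancels the M\"obius factors coming from $A,C$ and the global projective scaling of $B_1$. The appearance of $\R^+$ rather than $\R$ is forced as in the $\pgl{2,\C}$ elliptic lemma and the circle-inversion lemma: inversion about a circle fixes each ray from its center, so within each block $D_1$ and $B_1$ must lie on a common ray. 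Finally, that the normalization puts the blocks of $D_1$ ``in phase'' is what lets the per-block conditions reassemble into one conjugation $\sigma$; conversely, once all the quotients lie in $\R^+$ the $\sigma_n$ can be read off and glued, producing the required $G$-preserved projective $\R$-form.

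I expect the main obstacle to be the third step: verifying that the specific combination in the statement — the quotient of the level-$(2n+1)$ and level-$(2n+2)$ cross ratios — is exactly the quantity that is positive-real iff the block $P_n$ admits an admissible inversion, with no extra independent condition. This is where the non-standard order of $A$ is used essentially (it is what makes two consecutive levels of the flag land inside a single elliptic $2$-plane), and it requires pushing the quotient computations through carefully and checking that all M\"obius and scaling artifacts cancel in the quotient, so that the whole family of scalar conditions is genuinely equivalent to the existence of one common conjugation.
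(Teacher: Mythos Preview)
Your proposal is correct and follows essentially the same strategy as the paper. Both arguments normalize so that $G$'s eigenbasis is the standard basis with $e_{2n+1},e_{2n+2}$ forming the elliptic pairs and $D_1=\spn{e_1+\cdots+e_k}$, write $B_1=[a_1,\dots,a_k]$, and then characterize the $G$-preserved projective $\R$-forms compatible with the pair $B_1,D_1$ via the associated conjugation. The paper carries this out by invoking its explicit projective-frame description of the $\R$-form and computing the fixed-point condition $p=\conj{p}$ directly in coordinates (obtaining $\tfrac{a_{2n+1}}{a_{2n+2}}\in\R^+$ for each $n$ and then identifying this with the stated quotients of cross ratios); you phrase the same computation as a block decomposition $\sigma=\bigoplus_n\sigma_n$ on the $2$-planes $P_n=\spn{e_{2n+1},e_{2n+2}}$ together with a reduction to the two-elliptic $\pgl{2,\C}$ lemma in each block. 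This is a mild conceptual repackaging of the paper's coordinate calculation rather than a genuinely different route, and the place you flag as the ``main obstacle'' (matching the per-block positivity condition to the prescribed quotient of successive-level cross ratios, and checking that no further independent condition appears) is exactly the step the paper dispatches with the line ``the result then follows from our construction of the quotients with this normalization.''
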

Recall that given flags $A, B, C, D$, we let
    \begin{align*}
        A^{i,j} &= \left(A_{i+1} \oplus C_j\right) / (A_i \oplus C_j) \\
        B^{i,j} &= \left(B_{1} \oplus (A_i \oplus C_j)\right) / (A_i \oplus C_j) \\
        C^{i,j} &= \left(C_{j+1} \oplus A_i\right) / (A_i \oplus C_j) \\
        D^{i,j} &= \left(D_1 \oplus (A_i \oplus C_j)\right) / (A_i \oplus C_j)
    \end{align*}
and let $[A, B, C, D]$ denote the set of $k-1$ cross ratios $[A^{i,j}, B^{i,j}, C^{i,j}, D^{i,j}]$ for each $i+j = k-2$ with $i, j \geq 0$. 
\begin{proof}
Normalize so that the eigendirections of $G$ are $e_1, e_2, \cdots, e_k$, where (unlike our typical flag construction) $e_{2n+1}$ and $e_{2n+2}$ are an elliptic pair of eigendirections. Further, normalize so the vector portion of the flag $D$ is $e_1 + e_2 + \cdots + e_k$. Denote the vector portion of the flag $B$ by $[a_1, a_2, \cdots, a_k]$, which is (like our typical normalization) the elliptic pair eigendirection of $D_1$.

By being in generic position, the eigendirections of $G$ and the eigendirection corresponding to $D_1$ together form a projective frame. With our normalization, this projective frame is $\set{e_1, e_2, \cdots, e_k, e_1 + e_2 + \cdots + e_k}$ with associated basis $\set{e_1, e_2, \cdots, e_k}$. Similarly, the eigendirections of $G$ and the eigendirection of corresponding to $B_1$ together form a projective frame, which in our normalization is $\set{e_1, e_2, \cdots, e_k, [a_1, a_2, \cdots, a_k]}$ with associated basis $\set{a_1 e_1, a_2 e_2, \cdots a_k e_k}$. 

Points in $\C P^{k-1}$ are then represented as 
    $$\set{p = \sum\limits_{i=1}^k \lambda_i v_i \mid \lambda_i \in \C \text{ not all 0}}$$
with $v_i$ being the basis vectors in $\set{e_1, e_2, \cdots, e_k}$. A preserved projective $\R$ form is the subset such that $p = \conj{p}$ where $\conj{p} = \sum\limits_{i=1}^k \conj{\lambda_i} v_i'$ with $v_i'$ being the images of the original basis vectors $v_i$ in an associated basis, that is, basis vectors in $\set{a_1 e_1, a_2 e_2, \cdots a_k e_k}$. Then $p = \conj{p}$ implies
    $$[\lambda_1, \lambda_2, \cdots, \lambda_{k-1}, \lambda_k] = [a_1 \conj{\lambda_2}, a_2 \conj{\lambda_1}, \cdots, a_{k-1} \conj{\lambda_{k}}, a_k \conj{\lambda_{k-1}}]$$
Hence, for each $2n$ if $p = \conj{p}$ we have
    $$\frac{\lambda_{2n+1}}{\lambda_k} = \frac{a_{2n+1} \conj{\lambda_{2n+2}}}{a_k \conj{\lambda_{k-1}}}$$
and
    $$\frac{\lambda_{2n+2}}{\lambda_k} = \frac{a_{2n+2} \conj{\lambda_{2n+1}}}{a_k \conj{\lambda_{k-1}}}$$
Dividing the first equation by the second, we have that this implies
    $$\frac{\lambda_{2n+1}}{\lambda_{2n+2}} = \frac{a_{2n+1}}{a_{2n+2}} \cdot \frac{\conj{\lambda_{2n+2}}}{\conj{\lambda_{2n+1}}}$$
and thus
    $$\left( \frac{\lambda_{2n+1}}{\lambda_{2n+2}} \right) \conj{\left( \frac{\lambda_{2n+1}}{\lambda_{2n+2}} \right)} = \frac{a_{2n+1}}{a_{2n+2}}$$
so that $\frac{a_{2n+1}}{a_{2n+2}} \in \R^+$. Hence, the space of preserved points is non-empty and there is a preserved projective $\R$ form if and only if $\frac{a_{2n+1}}{a_{2n+2}} \in \R^+$ for every $n$. The result then follows from our construction of the quotients with this normalization. 

\end{proof}

Note that the previous result is analogous to the same result for elliptic eigendirections in $\C P^1$, where we saw normalized a pair to be at $\infty$ and $0$ and saw that another pair simultaneously preserved a projective $\R$ form if and only if they lived on a single ray from the origin. 

The previous lemmas allow us to normalize such that only the projective $\R$ form coming from $\R^k \subseteq \C^k$ is preserved and create a hyperbolic projective transformation with the standard basis vectors as eigendirections and another hyperbolic transformation which has the sum of the standard basis vectors as one eigendirection. We can then do computations for the remaining transformations, and the remaining eigendirections of the elliptic transformation, using the constructed hyperbolic transformations. We do not perform triple ratios using flags consisting of elliptic eigendirections, because the asymmetric quotients would not result in enlightening coordinates. 

\subsection{Computing Triple Ratios}
\subsubsection{Fock-Goncharov Method for Producing Copies of \texorpdfstring{$\C^3$}{C3}}
Using the Fock-Goncharov method to take triple ratios of the triple of flags $A, \beta, C$ in generic position, we quotient $\C^k$ by $A_i \oplus C_j \oplus \beta_\ell$ for $i + j + \ell = k-3$ with $i, j, \ell \geq 0$ to get points in $\C P^2$. We can't directly quotient the flags at any particular dimension, since by genericity they are not contained in a 3 complex-dimensional subspace of $\C^k$. We use the flags to produce:
\begin{align*}
    A &\rightarrow A_{i+1} \oplus C_j \oplus \beta_\ell \subseteq A_{i+2} \oplus C_j \oplus \beta_\ell \\
    \beta &\rightarrow  A_i \oplus C_j \oplus \beta_{\ell+1} \subseteq A_i \oplus C_j \oplus \beta_{\ell+2} \\
    C &\rightarrow A_i \oplus C_{j+1} \oplus \beta_\ell \subseteq A_i \oplus C_{j+2} \oplus \beta_\ell
\end{align*}
Let 
    $$\beta = \spn{m_1} \subseteq \spn{m_1} \oplus \spn{m_2} \subseteq \cdots \subseteq \spn{m_1} \oplus \spn{m_2} \oplus \cdots \oplus m_k$$ 
and let 
    $$m_i = [m_{(i)(1)}, m_{(i)(2)}, \cdots, m_{(i)(k)}]$$

For each $i + j + \ell = k - 3$, when we normalize so that 
\begin{align*}
    A_{i+1} &= \spn{e_1} \subseteq \spn{e_1} \oplus \spn{e_2} \subseteq \cdots \subseteq \spn{e_1} \oplus \cdots \oplus \spn{e_{i+1}} \\
    B_{\ell} &= \spn{e_{i+3}} \subseteq \spn{e_{i+3}} \oplus \spn{e_{i+4}} \subseteq \cdots \subseteq \spn{e_{i + 3}} \oplus \spn{e_{i + 4}} \oplus \cdots \oplus \spn{e_{i + \ell + 2}} \\ 
    C_{j+1} &= \spn{e_k} \subseteq \spn{e_k} \oplus \spn{e_{k-1}} \subseteq \cdots \subseteq \spn{e_k} \oplus \cdots \oplus \spn{e_{k-j}} \\
    &= \spn{e_k} \subseteq \spn{e_k} \oplus \spn{e_{k-1}} \subseteq \cdots \subseteq \spn{e_k} \oplus \cdots \oplus \spn{e_{i+\ell + 3}}
\end{align*}
and the intersection $A_{i+2} \cap C_{j+2} = \spn{e_{i+2}}$, which we can do by genericity of flags $A, B, C$. If we quotient by $A_i \oplus C_j \oplus \beta_\ell$ we have
\begin{align*}
    A &\mapsto \spn{[1, 0, 0]} \subseteq \spn{[1, 0, 0]} \oplus \spn{[0, 1, 0]} \\
    C &\mapsto \spn{[0, 0, 1]} \subseteq \spn{[0, 0, 1]} \oplus \spn{[0, 1, 0]}
\end{align*}
in the basis $e_{i+1}, e_{i + 2}, e_{i + \ell + 3}$ for $\C^3$. Further, we have

\begin{align*}
    \beta &\mapsto 
    (\beta_{\ell + 1} \oplus A_i \oplus C_j) / (A_i \oplus C_j \oplus B_{\ell}) \subseteq (\beta_{\ell + 2} \oplus A_i \oplus C_j) / (A_i \oplus C_j \oplus B_{\ell}) \\
    &= \spn{[m_{(\ell + 1)(i + 1)}, m_{(\ell + 1)(i + 2)}, m_{(\ell + 1)(i + \ell + 3)}]} \subseteq \\
    & \hspace{1cm} \spn{[m_{(\ell + 1)(i + 1)}, m_{(\ell + 1)(i + 2)}, m_{(\ell + 1)(i + \ell + 3)}]} \oplus \\
    & \hspace{1cm} \spn{[m_{(\ell + 2)(i + 1)}, m_{(\ell + 2)(i + 2)}, m_{(\ell + 2)(i + \ell + 3)}]}
\end{align*}

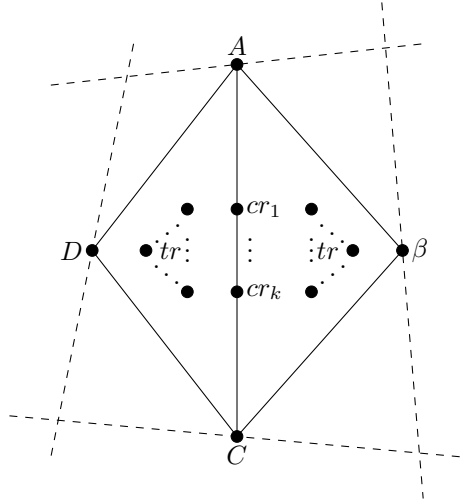
\begin{figure}[H]
\begin{tikzpicture}[scale=1.1]
    \draw[dashed] (-2, 2) -- (2.5, 2.5);
    \draw[dashed] (-1, 2.5) -- (-2, -2.5);
    \draw[dashed] (-2.5, -2) -- (3, -2.5);
    \draw[dashed] (2, 3) -- (2.5, -3);
    \draw (0.25, 2.25) -- (2.25, 0) -- (0.25, -2.25) -- (-1.5, 0) -- (0.25, 2.25);
    \draw (0.25, 2.25) -- (0.25, -2.25);
    \draw[fill=black] (0.25, 2.25) circle (2pt);
    \draw[fill=black] (0.25, -2.25) circle (2pt);
    \draw[fill=black] (2.25, 0) circle (2pt);
    \draw[fill=black] (-1.5, 0) circle (2pt);
    \node[above] at (0.25, 2.25) {$A$};
    \node[below] at (0.25, -2.25) {$C$};
    \node[left] at (-1.5, 0) {$D$};
    \node[right] at (2.25, 0) {$\beta$};
    \draw[fill=black] (0.25, 0.5) circle (2pt);
    \draw[fill=black] (0.25, -0.5) circle (2pt);
    \node[right] at (0.25, 0.5) {$cr_1$};
    \node[right] at (0.25, 0.1) {$\vdots$};
    \node[right] at (0.25, -0.5) {$cr_k$};
    \draw[fill=black] (-0.85, 0) circle (2pt);
    \draw[fill=black] (-0.35, 0.5) circle (2pt);
    \draw[fill=black] (-0.35, -0.5) circle (2pt);
    \draw[fill=black] (1.65, 0) circle (2pt);
    \draw[fill=black] (1.15, 0.5) circle (2pt);
    \draw[fill=black] (1.15, -0.5) circle (2pt);
    \node at (-0.55, 0) {$tr$};
    \node at (-0.6, 0.3) {$\iddots$};
    \node at (-0.6, -0.2) {$\ddots$};
    \node at (-0.35, 0.1) {$\vdots$};
    \node at (1.35, 0) {$tr$};
    \node at (1.4, 0.3) {$\ddots$};
    \node at (1.4, -0.2) {$\iddots$};
    \node at (1.15, 0.1) {$\vdots$};
\end{tikzpicture}
\caption{First two elements in $\pgl{k, \C}$}
\end{figure}

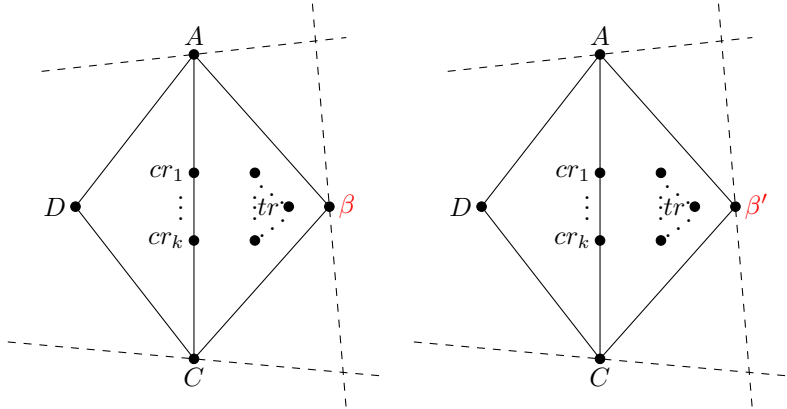
\begin{figure}[H]
\begin{tikzpicture}[scale=0.9]
    \draw[dashed] (-2, 2) -- (2.5, 2.5);
    \draw[dashed] (-2.5, -2) -- (3, -2.5);
    \draw[dashed] (2, 3) -- (2.5, -3);
    \draw (0.25, 2.25) -- (2.25, 0) -- (0.25, -2.25) -- (-1.5, 0) -- (0.25, 2.25);
    \draw (0.25, 2.25) -- (0.25, -2.25);
    \draw[fill=black] (0.25, 2.25) circle (2pt);
    \draw[fill=black] (0.25, -2.25) circle (2pt);
    \draw[fill=black] (2.25, 0) circle (2pt);
    \draw[fill=black] (-1.5, 0) circle (2pt);
    \node[above] at (0.25, 2.25) {$A$};
    \node[below] at (0.25, -2.25) {$C$};
    \node[left] at (-1.5, 0) {$D$};
    \node[right] at (2.25, 0) {$\color{red} \beta$};
    \draw[fill=black] (0.25, 0.5) circle (2pt);
    \draw[fill=black] (0.25, -0.5) circle (2pt);
    \node[left] at (0.25, 0.5) {$cr_1$};
    \node[left] at (0.25, 0.1) {$\vdots$};
    \node[left] at (0.25, -0.5) {$cr_k$};
    \draw[fill=black] (1.65, 0) circle (2pt);
    \draw[fill=black] (1.15, 0.5) circle (2pt);
    \draw[fill=black] (1.15, -0.5) circle (2pt);
    \node at (1.35, 0) {$tr$};
    \node at (1.4, 0.3) {$\ddots$};
    \node at (1.4, -0.2) {$\iddots$};
    \node at (1.15, 0.1) {$\vdots$};
\end{tikzpicture}
\hspace{0.2cm}
\begin{tikzpicture}[scale=0.9]
    \draw[dashed] (-2, 2) -- (2.5, 2.5);
    \draw[dashed] (-2.5, -2) -- (3, -2.5);
    \draw[dashed] (2, 3) -- (2.5, -3);
    \draw (0.25, 2.25) -- (2.25, 0) -- (0.25, -2.25) -- (-1.5, 0) -- (0.25, 2.25);
    \draw (0.25, 2.25) -- (0.25, -2.25);
    \draw[fill=black] (0.25, 2.25) circle (2pt);
    \draw[fill=black] (0.25, -2.25) circle (2pt);
    \draw[fill=black] (2.25, 0) circle (2pt);
    \draw[fill=black] (-1.5, 0) circle (2pt);
    \node[above] at (0.25, 2.25) {$A$};
    \node[below] at (0.25, -2.25) {$C$};
    \node[left] at (-1.5, 0) {$D$};
    \node[right] at (2.25, 0) {$\color{red} \beta'$};
    \draw[fill=black] (0.25, 0.5) circle (2pt);
    \draw[fill=black] (0.25, -0.5) circle (2pt);
    \node[left] at (0.25, 0.5) {$cr_1$};
    \node[left] at (0.25, 0.1) {$\vdots$};
    \node[left] at (0.25, -0.5) {$cr_k$};
    \draw[fill=black] (1.65, 0) circle (2pt);
    \draw[fill=black] (1.15, 0.5) circle (2pt);
    \draw[fill=black] (1.15, -0.5) circle (2pt);
    \node at (1.35, 0) {$tr$};
    \node at (1.4, 0.3) {$\ddots$};
    \node at (1.4, -0.2) {$\iddots$};
    \node at (1.15, 0.1) {$\vdots$};
\end{tikzpicture}
\caption{Further elements in $\pgl{k, \C}$}
\end{figure}



This method produces $\frac{(k-2)(k-1)}{2}$ triple ratios with the flag $B$ and another $\frac{(k-2)(k-1)}{2}$ triple ratios with the flag $B'$. 

\begin{restatable}{lemma}{pglKhh}
Two hyperbolic elements $G$ and $H$ in $\pgl{k, \C}$ without repeated eigenvalues and with eigendirection flag pairs $A, C$ and $B, D$, with $A, B, C, D$ in generic position, are simultaneously conjugate into $\pgl{k, \R}$ if and only if
\begin{enumerate}
    \item $[A, B, C, D] \in \R$ for each of the $k-1$ projections
    \item $r_3 (A, B, C) \in \R$ for each of the $\frac{(k-2)(k-1)}{2}$ projections
    \item $r_3 (A, C, D) \in \R$ for each of the $\frac{(k-2)(k-1)}{2}$ projections
\end{enumerate}
\end{restatable}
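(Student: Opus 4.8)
The plan is to mirror the structure of Lemma~\ref{pglthreeHH} (the $\pgl{3, \C}$ case): the cross ratios pin down the first eigendirection of the second transformation, and the triple ratios recover the remaining ones.

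For the forward implication, since cross ratios and triple ratios are invariant under simultaneous conjugation, first conjugate so that $G$ and $H$ lie in $\pgl{k, \R}$. As in Lemma~\ref{pglthreeHH}, because $G$ and $H$ are hyperbolic (with generic eigenvalues) all of their eigendirections, and the linear functionals cutting out the higher-dimensional parts of the flags $A, B, C, D$, become strictly real, so every cross ratio and triple ratio is a ratio of real numbers; projective invariance then gives realness of the coordinates for the original flags.

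For the converse, normalize with an element of $\pgl{k, \C}$ so that $A, C$ form the standard flag pair $\spn{e_1} \subseteq \spn{e_1}\oplus\spn{e_2} \subseteq \cdots$ and $\spn{e_k}\subseteq \spn{e_k}\oplus\spn{e_{k-1}}\subseteq\cdots$ and $D_1 = \spn{e_1 + \cdots + e_k}$; generic position makes this possible, and by our earlier determination of the unique compatible projective $\R$-form (together with Corollary~\ref{uniqueCorollary}) the only projective $\R$-form the pair could preserve is the one coming from $\R^k \subseteq \C^k$. Writing $B_1 = \spn{[a_1 : \cdots : a_k]}$, the $k-1$ cross ratios are exactly the consecutive ratios $a_{i+1}/a_{i+2}$ computed in the preceding subsection, so realness of $[A, B, C, D]$ forces $B_1$ (that is, the eigendirection $h_1$ of $H$) to be projectively real; likewise $h_k = D_1$ is real by construction. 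It remains to show that the intermediate eigendirections $h_2, \ldots, h_{k-1}$, equivalently the whole flags $B$ and $D$, are real --- and here the count is exactly right: there are $2\binom{k-1}{2} = (k-1)(k-2)$ triple ratios and $k-2$ unknown eigendirections, each needing $k-1$ real conditions. Carrying out the quotients as in the previous subsection, each triple ratio from $r_3(A, B, C)$ becomes a rational expression in the components of $h_1, h_2, \ldots$ together with the (real) data of $A, C$, and $D_1$, and each from $r_3(A, C, D)$ similarly in the components of $h_k, h_{k-1}, \ldots$; processing these from the two ends inward, at each stage all coefficients that appear are already known to be real, and realness of the triple ratios yields a rank-$(k-1)$ system of $\R$-linear equations in the homogeneous coordinates of the next eigendirection, forcing it to be projectively real. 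Alternatively one may invoke the Fock-Goncharov parametrization: these $(k-1)^2$ coordinates give a birational isomorphism defined over $\Q$ between $(\C^\times)^{(k-1)^2}$ and the configuration space of generic flag quadruples modulo $\pgl{k,\C}$ (of the same complex dimension), so real coordinate values reconstruct a configuration over $\R$, which after our normalization must be $(A,B,C,D)$ itself.

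Once $B$ and $D$ are real flags, every eigendirection of $G$ and of $H$ lies in $\R P^{k-1} \subseteq \C P^{k-1}$; since the eigenvalues of each are $\pgl{k,\R}$-compatible and hyperbolic with generic eigenvalues, up to an overall scalar each of $G$ and $H$ is a real matrix in the standard basis and hence preserves $\R P^{k-1}$, so by Corollary~\ref{conjIFFrForm} they are simultaneously conjugate into $\pgl{k, \R}$. The main obstacle is the converse's final step --- upgrading realness of the triple ratios to realness of $B$ and $D$: for $k = 3$ this was the explicit elimination carried out inside Lemma~\ref{pglthreeHH}, and in general one must keep careful track of which triple ratios control which eigendirection and check that the associated linear systems have full rank under the genericity hypothesis (or else appeal to the Fock-Goncharov coordinate theorem).
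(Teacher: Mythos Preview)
Your proposal is correct and follows essentially the same route as the paper: both directions use projective invariance, the same normalization of $A, C, D_1$ to the standard flag pair and the all-ones vector, the cross ratios to force $B_1$ real, and then the triple ratios together with the $\pgl{3,\C}$ computation of Lemma~\ref{pglthreeHH} to handle the remaining eigendirections. The paper's own proof is in fact terser than yours at the triple-ratio step (it simply asserts ``the computations follow exactly the $\pgl{3,\C}$ case after normalizing as above''), so your honest flagging of that step as the main obstacle, your dimension count, and your alternative appeal to the Fock--Goncharov birational parametrization all go slightly beyond what the paper writes down.
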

\begin{proof}
This coincides exactly with when the eigendirection components and coefficients in the systems of linear equations are real with respect to a basis corresponding to the preserved projective $\R$ form. 

First we consider the $k-1$ cross ratios. Normalize so that
    $$A = \spn{e_1} \subseteq \spn{e_1} \oplus \spn{e_2} \subseteq \cdots \subseteq \spn{e_1} \oplus \cdots \oplus \spn{e_k}$$ 
and $C$ is the reverse flag. Further, normalize so that $D_1 = \spn{e_1 + e_2 + \cdots + e_k}$.

Denote the eigendirections of $H$ by $m_i$ so that 
    $$B = \spn{m_1} \subseteq \spn{m_1} \oplus \spn{m_2} \subseteq \cdots \subseteq \spn{m_1} \oplus \spn{m_2} \oplus \cdots \oplus \spn{m_k}$$ 
and $D$ is the reverse flag. If we quotient by $A_i \oplus C_j$ we have points in the quotient space consisting of points in $\C^k$ whose first $i$ components and last $j$ components are all $0$. As this is a copy of $\C^3$, denote the points simply by their $i+1, i+2,$ and $i+3$ coordinates. 
\begin{align*}
    A &\mapsto A_{i+1} / (A_i \oplus C_j) = \spn{[1, 0]} \\
    C &\mapsto C_{j+1} / (A_i \oplus C_j) = \spn{[0, 1]}
\end{align*}
and
\begin{align*}
    D &\mapsto \spn{m_1} \subseteq \spn{m_1} / (A_i \oplus C_j) \oplus \spn{m_2}) / (A_i \oplus C_j) \\
    &= \spn{[m_{(1)(i+1)}, m_{(1)(i+2)}]} = \spn{[1, 1]} \\
    B &\mapsto \spn{m_k} / (A_i \oplus C_j) = \spn{[m_{(k)(i+1)}, m_{(k)(i+2)}]}
\end{align*}

$B_1$ is in the preserved projective $\R$ form determined by $A, C,$ and $D_1$ in this basis if and only if it is in the projection of $\R^k \subseteq \C^k$, that is, if and only if the ratio of every pair of components is real. The cross ratio is giving the ratio of successive components of $B_1$, all of which are nonzero by genericity of the flags, so $B_1$ is compatible in the projective $\R$ form determined by $A, C, D_1$ if and only if all $k-1$ cross ratios are real. 

Similarly, the triple the triple ratios will determine the ratios of terms in each remaining dimension of $B$, thereby determining that the eigendirections all live in a single projective $\R$ form precisely when the collection of cross ratios and triple ratios are all real. The computations follow exactly the $\pgl{3, \C}$ case after normalizing as above. 

By projective invariance of cross ratios and triple ratios, the result follows. 

\end{proof}

\begin{restatable}{corollary}{pglKmoreH}
Given hyperbolic elements $G$ and $H$ in $\pgl{k, \C}$ which are simultaneously conjugate into $\pgl{k, \R}$, with eigendirection flag pairs $A, C$ and $B, D$ all in generic position, a hyperbolic element $M$ in $\pgl{k, \C}$, with eigendirection flags $\beta, \beta'$ in generic position with $A, C, D$, is simultaneously conjugate into $\pgl{k, \R}$ with $G$ and $H$ if and only if
\begin{enumerate}
    \item $[A, \beta, C, D] \in \R$ for each of the $k-1$ projections
    \item $[A, \beta', C, D] \in \R$ for each of the $k-1$ projections
    \item $r_3 (A, \beta, C) \in \R$ for each of the $\frac{(k-2)(k-1)}{2}$ projections
    \item $r_3 (A, \beta', C) \in \R$ for each of the $\frac{(k-2)(k-1)}{2}$ projections
\end{enumerate}
\end{restatable}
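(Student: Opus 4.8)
The plan is to run the argument of Corollary \ref{pglthreeHHH}, which settles exactly this situation in $\pgl{3,\C}$, and promote it to arbitrary $k$ using the quotient constructions and coordinate computations already in place. The forward direction is immediate and I would dispose of it first: cross ratios and triple ratios are projective invariants, so if one element $\Gamma$ conjugates $G$, $H$ and $M$ simultaneously into $\pgl{k,\R}$, then after conjugating by $\Gamma$ every eigendirection and every flag--defining functional of $A,C,\beta,\beta'$ is real, whence every cross ratio and triple ratio appearing in (a)--(d) is real.

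For the converse I would begin with the normalization. Since $G$ and $H$ are simultaneously conjugate into $\pgl{k,\R}$ they preserve a common projective $\R$ form by Corollary \ref{conjIFFrForm}, and the generic-position hypothesis on $A,B,C,D$ prevents the eigendirections from being partitioned as in Theorem \ref{uniqueness}, so by Corollary \ref{uniqueCorollary} this form $R$ is unique. Conjugate the whole collection so that $R=\R P^{k-1}$, the eigenbasis of $G$ is the standard basis (hence $A,C$ are the standard flag and its reverse), and $D_1=\spn{e_1+\cdots+e_k}$; this is the normalization of the preceding lemma, and it is available precisely because the $G$-eigendirections together with $D_1$ form a projective frame whose associated $\R$ form is $\R^k\subseteq\C^k$. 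In this normalization $M$ is simultaneously conjugate into $\pgl{k,\R}$ together with $G$ and $H$ if and only if $M$ also preserves $R=\R P^{k-1}$, i.e.\ if and only if every eigendirection $m_1,\dots,m_k$ of $M$ lies in $\R P^{k-1}$.

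Next I would reduce the condition ``all $m_i$ real'' to ``both flags $\beta$ and $\beta'$ are defined over $\R^k$''. Writing $\beta=\spn{m_1}\subseteq\spn{m_1}\oplus\spn{m_2}\subseteq\cdots$ and $\beta'=\spn{m_k}\subseteq\spn{m_k}\oplus\spn{m_{k-1}}\subseteq\cdots$, genericity gives $\spn{m_i}=\beta_i\cap\beta'_{k-i+1}$; an intersection of two complexified-real subspaces is complexified-real, and conversely, so the reduction holds. It then remains to show (a)--(d) are equivalent to $\beta$ and $\beta'$ being defined over $\R^k$. For $\beta$: quotienting by $A_i\oplus C_j$ exactly as in the preceding lemma, the $k-1$ cross ratios $[A,\beta,C,D]$ are the ratios of successive coordinates of $m_1$, all nonzero by genericity, so their reality is equivalent to $\beta_1\in\R P^{k-1}$; and once $\beta_1$ is fixed, working through the quotient copies of $\C^3$ as in Lemma \ref{pglthreeHH} the $\tfrac{(k-2)(k-1)}{2}$ triple ratios $r_3(A,\beta,C)$ pin down the coordinate ratios in the higher-dimensional terms of $\beta$, so their reality is equivalent to all of $\beta$ being defined over $\R^k$. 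The identical argument with $[A,\beta',C,D]$ and $r_3(A,\beta',C)$ handles $\beta'$, and $r_3(A,C,D)$ needs no new hypothesis since it involves only $G$ and $H$ and is already real by the preceding lemma applied to those two.

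The step carrying the real content, and the one I expect to be the main obstacle, is the recursive reconstruction: verifying that in dimension $k$ the reality of the $\tfrac{(k-2)(k-1)}{2}$ triple ratios $r_3(A,\beta,C)$ actually forces every term $\beta_1\subseteq\beta_2\subseteq\cdots$ of the filtration to be defined over $\R$, not merely the bottom couple. In $\pgl{3,\C}$ this was an explicit elimination (solving for $b_1'$ and $b_3'$ in Lemma \ref{pglthreeHH}); in general one inducts on the flag level, at each stage using the triple ratios attached to the appropriate quotient $\C^3$ to express the next batch of coordinate ratios in terms of quantities already shown real, with the generic-position hypotheses on $A,\beta,C,D$ and on $A,\beta',C,D$ guaranteeing that none of the denominators or coordinates that appear vanish. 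Once this is in place, ``all $m_i\in\R P^{k-1}$'' holds exactly when (a)--(d) hold, and the corollary follows.
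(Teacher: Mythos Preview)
Your proposal is correct and follows essentially the same route as the paper: normalize so that the common projective $\R$-form is $\R P^{k-1}$ with $A,C$ the standard flag pair and $D_1=\spn{e_1+\cdots+e_k}$, then invoke the three-dimensional computations (Lemma~\ref{pglthreeHH} and Corollary~\ref{pglthreeHHH}) inside each quotient copy of $\C^3$.  The paper's proof is extremely terse---literally ``with the normalizations as in the previous lemma, this follows from Lemma~\ref{pglthreeHH} and Corollary~\ref{pglthreeHHH}''---so your write-up is strictly more detailed than what appears there.

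One small difference worth noting: you organize the converse via the clean reduction $\spn{m_i}=\beta_i\cap\beta'_{k-i+1}$, arguing separately that the reality of $r_3(A,\beta,C)$ forces the entire flag $\beta$ to be defined over $\R$ and likewise for $\beta'$, then intersecting.  The paper's three-dimensional argument (Corollary~\ref{pglthreeHHH}) instead treats the two triple ratios $r_3(A,\beta,C)$ and $r_3(A,\beta',C)$ as a simultaneous system of two equations for the middle eigendirection $m_2$ directly.  Both are valid; your intersection formulation is arguably cleaner and makes the induction on flag level more transparent, which is exactly where the paper leaves the reader to fill in details.  (Minor quibble: the identity $\spn{m_i}=\beta_i\cap\beta'_{k-i+1}$ follows from $m_1,\dots,m_k$ being an eigenbasis, not from generic position with $A,C,D$; the generic-position hypothesis is what ensures the cross ratios and triple ratios are all defined and nonzero.)
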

\begin{proof}
With the normalizations as in the previous lemma, this follows from lemma \ref{pglthreeHH} and corollary \ref{pglthreeHHH}. 

\end{proof}

\begin{restatable}{theorem}{pglKhhe}
Given hyperbolic elements $G$ and $H$ in $\pgl{k, \C}$ and simultaneously conjugate into $\pgl{k, \R}$, with eigendirection flag pairs $A, C$ and $B, D$ all in generic position, an elliptic or mixed type projective transformation $M$ with at most one hyperbolic eigendirection and flags $\beta, \beta'$ in generic position with $A, C, D$ will be simultaneously conjugate into $\pgl{k, \R}$ if and only if 
\begin{enumerate}
    \item $[A, \beta, C, D] = \conj{[A, \beta', C, D]}$ for each of the $k-1$ projections
    \item $r_3 (A, \beta, C) = \conj{r_3 (A, \beta', C)}$ for each of the $\frac{(k-1)(k-2)}{2}$ projections
\end{enumerate}
\end{restatable}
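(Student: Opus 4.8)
The plan is to reduce the statement to the cross-ratio criterion already established for adjoining a transformation to two simultaneously real hyperbolics (which handles the one-dimensional parts $\beta_1$ and $\beta_1'$), to a dimension-$k$ version of the triple-ratio computation of Lemma~\ref{pglthreeHHE} (which handles the remaining eigendirections of $M$), and then to invoke Theorem~\ref{conjugationTheorem}. First I would note that, $G$ and $H$ being hyperbolic with generic eigenvalues and $A, B, C, D$ in generic position, Corollary~\ref{uniqueCorollary} forces $\{G,H\}$ to preserve at most one projective $\R$ form; being simultaneously conjugate into $\pgl{k, \R}$ they preserve exactly one, and by Corollary~\ref{conjIFFrForm} the collection $\{G,H,M\}$ is simultaneously conjugate into $\pgl{k, \R}$ if and only if $M$ preserves that same projective $\R$ form. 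Since cross ratios and triple ratios are conjugation invariant, I would normalize by a \emph{real} change of basis --- possible because $G$'s eigendirections and $D_1$ are real and, by genericity, meet no coordinate hyperplane --- so that $A$ is the standard flag, $C$ its reverse, $D_1 = \spn{e_1 + \cdots + e_k}$, and the unique preserved projective $\R$ form is the projection of $\R^k \subseteq \C^k$, exactly as in the proof of the two-hyperbolic lemma above.

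For the ($\Longrightarrow$) direction, with this normalization $M \in \pgl{k, \R}$, so up to scaling its lone hyperbolic eigendirection (if it has one) is real and its other eigendirections occur in componentwise complex conjugate pairs; since $\beta'$ is built from the reverse of the ordered basis underlying $\beta$, representatives may be chosen so that the $j$-th eigendirection of $\beta'$ is the componentwise conjugate of the $j$-th eigendirection of $\beta$. The flags $A, C, D$ contribute only real data to every cross ratio and triple ratio, so each projection of $[A,\beta',C,D]$ is the complex conjugate of the corresponding projection of $[A,\beta,C,D]$, and $r_3(A,\beta',C) = \conj{r_3(A,\beta,C)}$ for each of the $\frac{(k-1)(k-2)}{2}$ projections. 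This is conditions~1 and~2.

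For the ($\Longleftarrow$) direction, condition~1 together with the cross-ratio criterion for adjoining a transformation to two simultaneously real hyperbolics gives at once that $m_1$ and $m_k$ (the direction vectors of $\beta_1$ and $\beta_1'$) are an elliptic eigendirection pair compatible with the standard projective $\R$ form, so after rescaling $m_k = \conj{m_1}$ componentwise. It remains to propagate this to the remaining eigendirections using condition~2. I would compute the triple ratios in each $\C^3$ quotient as in Lemma~\ref{pglthreeHH} and Lemma~\ref{pglthreeHHE}: in the quotient indexed by $i+j+\ell = k-3$, $r_3(A,\beta,C)$ is a ratio of products of the relevant components of the consecutive eigendirections $m_{\ell+1}$ and $m_{\ell+2}$ against purely real $A$- and $C$-data, while $r_3(A,\beta',C)$ is the corresponding ratio built from $m_{k-\ell}$ and $m_{k-\ell-1}$. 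Inducting on $\ell$ from the base case $m_k = \conj{m_1}$ --- at each stage running the computation of Lemma~\ref{pglthreeHHE} verbatim, with the already-identified conjugate pair $m_{\ell+1}, m_{k-\ell}$ playing the role of the real eigendirections of $G$ there --- the equalities $r_3(A,\beta,C) = \conj{r_3(A,\beta',C)}$, as $i$ ranges over the admissible quotients, force $m_{\ell+2}$ to be the componentwise conjugate of $m_{k-\ell-1}$. Hence $m_{k+1-j} = \conj{m_j}$ for every $j$ and, when $k$ is odd, the central eigendirection is real, so the eigendirections of $M$ respect the conjugation of the standard projective $\R$ form, and Theorem~\ref{conjugationTheorem} concludes that $\{G,H,M\}$ is simultaneously conjugate into $\pgl{k, \R}$.

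The step I expect to be the main obstacle is this induction in the ($\Longleftarrow$) direction. One must check that the genericity of $A,\beta,C,D$ and of $A,\beta',C,D$ keeps every denominator appearing in the triple-ratio identities away from zero, so that at each stage the identity genuinely determines the next eigendirection rather than degenerating, and one must verify that the eigendirections of $M$ can be ordered into the flag pair $\beta, \beta'$ so that the induction chains up consistently --- elliptic pairs in positions $j$ and $k+1-j$, with the at-most-one hyperbolic eigendirection in the central slot. This last point is precisely where the ``at most one hyperbolic eigendirection'' hypothesis is used.
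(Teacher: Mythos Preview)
Your proposal is correct and follows essentially the same approach as the paper. The paper's own proof is a single sentence --- ``With the normalizations as above, this follows directly from lemma~\ref{pglthreeHHE}'' --- and your outline is precisely the unpacking of that sentence: normalize so that $A$, $C$, $D_1$ are the standard real data and the preserved projective $\R$ form is the standard one, then run the $\pgl{3,\C}$ triple-ratio computation of Lemma~\ref{pglthreeHHE} in each $\C^3$ quotient, inducting through the levels of the flag. Your identification of the two genuine checks (nondegeneracy of the denominators from genericity, and the need to order the eigendirections so that elliptic pairs sit in slots $j$ and $k+1-j$ with the lone hyperbolic eigendirection, if any, in the center) is exactly right and is more than the paper itself spells out.

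One small streamlining you might consider for the $(\Longleftarrow)$ induction: rather than propagating $m_{k+1-j}=\conj{m_j}$ one level at a time, you can argue globally that the $k-1$ cross ratios together with the $\tfrac{(k-1)(k-2)}{2}$ triple ratios of $(A,\beta,C)$ determine the full flag $\beta$ once $A,C,D_1$ are fixed (this is the standard Fock--Goncharov coordinate count on the flag variety), and since $A,C,D_1$ are real, conjugating all the invariants conjugates the flag. Conditions~1 and~2 then say exactly $\beta'=\conj{\beta}$; the flag-pair relation $\beta_j\cap\beta'_{k-j+1}=\spn{m_j}$ immediately yields $\spn{m_{k+1-j}}=\spn{\conj{m_j}}$ for every $j$. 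This avoids tracking the quotients by $\beta_\ell$ and $\beta'_\ell$ separately and makes the role of the flag-pair hypothesis transparent.
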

\begin{proof}
With the normalizations as above, this follows directly from lemma \ref{pglthreeHHE}.

\end{proof}

For projective transformations without flags of hyperbolic eigendirections in generic position, we are still able to take a collection of Fock-Goncharov coordinates and determine if a collection of corresponding elements are simultaneously conjugate into $\pgl{k, \R}$, albeit in a less direct way. The systems of linear equations that the triple ratios give will have unique solutions since the flags are assumed to be in generic position. Those solutions determine the eigendirections of each element and after determining each eigendirection we can check if they are compatible with the normalized projective $\R$ form. Then it is clear that a projective transformation is simultaneously conjugate into $\pgl{k, \R}$ if and only if all of its eigendirections are compatible with that projective $\R$ form. If we took Fock-Goncharov coordiantes directly from the eigendirection flags, we would have to solve a system of equations to determine if the collection is simultaneously conjugate into $\pgl{k, \R}$, because the quotients by asymmetrical spans of an eigendirection pairs would be disruptive. 

Instead, we produce a single, more concise theorem which gives motivation for the following section. The existence of some preserved projective $\R$ form implies the existence of some hyperbolic elements in $\pgl{k, \R}$ which preserve that projective $\R$ form. With respect to some basis for that projective $\R$ form, the hyperbolic eigendirections will be strictly real and the elliptic eigendirection pairs will be component-wise complex conjugates. The implication of this theorem is not reversible because of the generic position condition. 

\begin{restatable}{theorem}{pglK}
Given a collection of transformations in $\pgl{k, \C}$, each with $\pgl{k, \R}$ compatible non repeated eigenvalues, if there exists some flag pair $A, C$ in $\C^k$ and some eigendirection $d \in \C^k$ such that for every eigendirection $v$ from the collection of transformations we have $A, \spn{v}, C, \spn{d}$ is in generic position, then the collection is simultaneously conjugate into $\pgl{k, \R}$ if
\begin{enumerate}
    \item $[A, \spn{h}, C, \spn{d}] \in \R$ for each of the $k-1$ projections, for each hyperbolic eigendirection $h$
    \item $[A, \spn{v_1}, C, \spn{d}] = \conj{[A, \spn{v_2}, C, \spn{d}]}$ for each of the $k-1$ projections, for each elliptic eigendirection pair $v_1, v_2$
\end{enumerate}
\end{restatable}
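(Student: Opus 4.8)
The plan is to reduce the statement to Theorem~\ref{conjugationTheorem} by showing that hypotheses (a) and (b), read in a suitable basis, force every eigendirection of every transformation in the collection to respect the conjugation attached to $\R^k \subseteq \C^k$. First I would normalize: both ``simultaneously conjugate into $\pgl{k,\R}$'' and each cross ratio $[A,\spn{v},C,\spn{d}]$ are invariant under conjugating the whole collection --- together with $A$, $C$, and $d$ --- by a fixed element of $\pgl{k,\C}$, so I may assume the configuration is standard. Because $A$ and $C$ form a flag pair in generic position, there is a basis with $A = \spn{e_1} \subseteq \spn{e_1}\oplus\spn{e_2} \subseteq \cdots$ and $C$ the reverse flag; and because $\spn{d}$ is in generic position with all the subspaces $A_i \oplus C_j$, I may rescale the $e_j$ (which changes neither $A$ nor $C$) so that $d = e_1 + e_2 + \cdots + e_k$. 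In this basis, the only projective $\R$-form compatible with this one-dimensional data is the projection of $\R^k \subseteq \C^k$, whose associated conjugation $\sigma$ is componentwise complex conjugation.

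Next I would unwind the cross ratios exactly as in the ``Computing Cross Ratios'' subsection: for an eigendirection $v = [v_1,\dots,v_k]$, projecting the quadruple $A,\spn{v},C,\spn{d}$ modulo $A_i \oplus C_{k-2-i}$ yields the four points $\infty,\ v_{i+1}/v_{i+2},\ 0,\ 1$ in $\C P^1$, so the cross ratio at that projection equals $v_{i+1}/v_{i+2}$ for $i=0,\dots,k-2$, and by generic position none of $v_1,\dots,v_k$ vanishes. Hypothesis (a) then says that for each hyperbolic eigendirection $h$ every ratio $h_{i+1}/h_{i+2}$ is real, hence $h$ is projectively real, i.e.\ $\sigma$-fixed. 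For an elliptic pair $v_1,v_2$, normalize the representatives so that their last components equal $1$; since each component is a product of consecutive ratios, hypothesis (b) gives $(v_1)_j = \conj{(v_2)_j}$ for all $j$, so $\sigma$ interchanges $v_1$ and $v_2$. Therefore the full collection of eigendirections respects $\sigma$.

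Finally, each transformation in the collection has $\pgl{k,\R}$-compatible, non-repeated eigenvalues, so it is diagonalizable and admits a representative with real hyperbolic eigenvalues and elliptic eigenvalues occurring in conjugate pairs matched to the eigendirection pairs; such a representative commutes with $\sigma$ on an eigenbasis, hence everywhere, so the transformation stabilizes the projective $\R$-form cut out by $\sigma$. By the ``$\Longleftarrow$'' direction of Theorem~\ref{conjugationTheorem}, conjugating by the element carrying this $\R$-form to $\R^k\subseteq\C^k$ carries the whole collection into $\pgl{k,\R}$, which is the claim. I expect the delicate point --- and the step I would write out most carefully --- is the equivalence ``(a) and (b) hold $\iff$ every eigendirection respects $\sigma$'': this uses the generic position hypothesis to ensure that all $k$ normalized components of each eigendirection are nonzero, so that the $k-1$ successive-ratio cross ratios determine the eigendirection projectively, together with the fact that the pairing of elliptic eigendirections used in (b) is exactly the pairing under which the $\pgl{k,\R}$-compatible eigenvalues are reflections of one another, so that ``respects $\sigma$'' is precisely the hypothesis of Theorem~\ref{conjugationTheorem}.
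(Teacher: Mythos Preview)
Your proposal is correct and matches the paper's approach. The paper actually states this theorem without a formal proof, offering only the remark preceding it that ``with respect to some basis for that projective $\R$ form, the hyperbolic eigendirections will be strictly real and the elliptic eigendirection pairs will be component-wise complex conjugates''; your argument is precisely the intended one, assembled from the normalization and cross-ratio computation in Section~7.1 (Lemmas \texttt{pglKHH} and \texttt{pglKHHM}) together with Theorem~\ref{conjugationTheorem}, and the care you take with the generic-position hypothesis (ensuring no component vanishes so that the $k-1$ successive ratios determine the eigendirection) is exactly the point the paper leaves implicit.
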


\subsection{Using Only Cross Ratios}
In this section we expand on the idea of the previous theorem and continue to explore using only cross ratios, since their quotients are, in some sense, more directly enlightening. 

\begin{restatable}{lemma}{pglKehe}
Let $A, C$ an elliptic eigendirection flag pair of eigendirections from some transformations in $\pgl{k, \C}$ for even $k$, constructed atypically so that if $v, \conj{v}$ denotes an elliptic eigendirection pair then 
    $$A = \spn{v_1} \subseteq \spn{v_1} \oplus \spn{\conj{v_1}} \subseteq \spn{v_1} \oplus \spn{\conj{v_1}} \oplus \spn{v_2} \subseteq \spn{v_1} \oplus \spn{\conj{v_1}} \oplus \spn{v_2} \oplus \spn{\conj{v_2}} \subseteq \cdots$$
Let $v$ be any hyperbolic eigendirection from some transformation in $\pgl{k, \C}$ with the property that if $D_1 = \spn{v}$ then $A, C, D_1$ are in generic position. An elliptic eigendirection pair $\beta, \beta'$ where $A, \spn{\beta}, C, D_1$ are in generic position and $A, \spn{\beta'}, C, D_1$ are in generic position is compatible with the unique projective $\R$ form preserved by the eigendirections used to construct $A, C,$ and $D_1$ if and only if
\begin{enumerate}
    \item For every even $j$, 
\begin{align*}
    & [A^{j, k - 2 - j}, \spn{\beta}^{j, k - 2 - j}, C^{j, k - 2 - j}, D^{j, k - 2 - j}] \; \\
    & \cdot \conj{[A^{j, k - 2 - j}, \spn{\beta'}^{j, k - 2 - j}, C^{j, k - 2 - j}, D^{j, k - 2 - j}]} = 1
\end{align*} 
    \item For every odd $j$, the cross ratio 
        $$[A^{j, k - 2 - j}, \spn{\beta}^{j, k - 2 - j}, C^{j, k - 2 - j}, D^{j, k - 2 - j}]$$ 
    is equal to the complex conjugate of the product of the three cross ratios 
    \begin{align*} 
        & [A^{j-1, k - 1 - j}, \spn{\beta'}^{j-1, k - 1 - j}, C^{j-1, k - 1 - j}, D^{j-1, k - 1 - j}] \\
        & \cdot [A^{j, k - 2 - j}, \spn{\beta'}^{j, k - 2 - j}, C^{j, k - 2 - j}, D^{j, k - 2 - j}] \\
        & \cdot [A^{j+1, k - 3 - j}, \spn{\beta'}^{j+1, k - 3 - j}, C^{j+1, k - 3 - j}, D^{j+1, k - 3 - j}]
    \end{align*}
\end{enumerate}
\end{restatable}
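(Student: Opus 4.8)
The plan is to normalize the configuration, translate ``compatible with the preserved $\R$ form'' into a coordinate relation between the one-dimensional parts of $\beta$ and $\beta'$, rewrite each cross ratio occurring in the statement as a ratio of two consecutive coordinates, and then verify the two implications by substitution and a short telescoping-and-cancellation argument.

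First I would invoke the normalization set up just before this lemma. Since $A,C,D_1$ are in generic position, the $k+1$ eigendirections building them form a projective frame, so by the preceding theorem giving the unique candidate projective $\R$ form through $k+1$ eigendirections (together with Corollary~\ref{uniqueCorollary}, applicable because $k+1$ vectors in general position in $\C^k$ cannot be partitioned into subspaces meeting only at $0$) there is a unique projective $\R$ form $R$, and its conjugation $\iota$ fixes $\spn{v}$ and swaps each elliptic pair $\spn{v_n}\leftrightarrow\spn{\conj{v_n}}$. Choose coordinates so that $A$ is the flag of the standard basis $e_1,\dots,e_k$, $C$ its reverse, and $D_1=\spn{e_1+\cdots+e_k}$. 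The role of the atypical construction of $A$ is that now each consecutive pair $\{e_{2n-1},e_{2n}\}$ is an elliptic eigendirection pair, and the description of $R$ from that theorem becomes
\[
    \iota\bigl([x_1,x_2,\dots,x_{k-1},x_k]\bigr)=\bigl[\conj{x_2},\conj{x_1},\dots,\conj{x_k},\conj{x_{k-1}}\bigr],
\]
i.e.\ conjugate every coordinate and swap within consecutive pairs; the swap sends $e_{2n-1}$ exactly to $e_{2n}$ because the normalization $D_1=\iota(D_1)$ forces the scaling freedom in the construction of $R$ to be trivial.

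Writing $m_1=[a_1,\dots,a_k]$ and $m_k=[b_1,\dots,b_k]$ for representatives of the one-dimensional parts of $\beta,\beta'$, the pair respects $R$ precisely when $\iota(\spn{m_1})=\spn{m_k}$, i.e.\ when $b_{2n-1}=s\,\conj{a_{2n}}$ and $b_{2n}=s\,\conj{a_{2n-1}}$ for a single $s\in\C\bs\set{0}$ and all $n$; since $\iota$ descends to $\C P^{k-1}$ this does not depend on representatives. Next I would recall from the ``Computing Cross Ratios'' calculation that, with this normalization, the cross ratio at the quotient position $(j,k-2-j)$ of a flag with one-dimensional part $\spn{[c_1,\dots,c_k]}$ against $A,C,D$ equals $c_{j+1}/c_{j+2}$, and that genericity forces every $c_\ell\ne0$ (hence all $a_\ell,b_\ell\ne0$). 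Thus condition (a) at the even index $j=2n-2$ reads $a_{2n-1}\conj{b_{2n-1}}=a_{2n}\conj{b_{2n}}$, and condition (b) at the odd index $j=2n-1$, after the three cross ratios of $\beta'$ telescope to $b_{2n-1}/b_{2n+2}$, reads $a_{2n}\conj{b_{2n+2}}=a_{2n+1}\conj{b_{2n-1}}$.

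For the forward implication I would substitute $\conj{b_{2n-1}}=\conj{s}\,a_{2n}$ and $\conj{b_{2n}}=\conj{s}\,a_{2n-1}$ into these identities; both collapse to trivialities, and reading the cross-ratio expressions back off recovers (a) and (b). For the converse, condition (a) gives $\conj{a_{2n-1}}\,b_{2n-1}=\conj{a_{2n}}\,b_{2n}$, so $s_n:=b_{2n-1}/\conj{a_{2n}}=b_{2n}/\conj{a_{2n-1}}$ is well defined, nonzero, and realizes compatibility within the $n$-th pair; substituting $b_{2n-1}=s_n\conj{a_{2n}}$ and $b_{2n+2}=s_{n+1}\conj{a_{2n+1}}$ into the conjugate of condition (b) and cancelling $\conj{a_{2n}}\,\conj{a_{2n+1}}\ne0$ gives $s_{n+1}=s_n$; since the odd indices $j=1,3,\dots,k-3$ chain every consecutive pair, all $s_n$ coincide with one scalar $s$, so $m_k=s\,\iota(m_1)$ and the pair respects $R$. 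The main obstacle is entirely in the first step: extracting the precise swap-within-pairs form of $\iota$, with trivial scalars, from the atypical flag ordering, and then keeping the parity bookkeeping (which $j$ are even vs.\ odd, their ranges, which $b$-index lies in which pair) straight. Once $\iota$ and the cross-ratio dictionary are fixed, both directions are routine algebra.
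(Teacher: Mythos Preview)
Your proposal is correct and follows essentially the same route as the paper's proof: the same normalization of $A,C,D_1$ to the standard basis and the all-ones vector, the same identification of the conjugation $\iota$ as ``conjugate and swap within consecutive pairs,'' and the same reading of each cross ratio as a ratio of consecutive coordinates. If anything, your converse is tidier than the paper's: you make the chaining argument explicit by defining $s_n:=b_{2n-1}/\conj{a_{2n}}$ from condition (a) and then using condition (b) at odd $j$ to force $s_{n+1}=s_n$, whereas the paper handles that step more informally.
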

\begin{proof}
Normalize so that 
    $$A = \spn{e_1} \subseteq \spn{e_1} \oplus \spn{e_2} \subseteq \spn{e_1} \oplus \spn{e_2} \oplus \spn{e_3} \subseteq \spn{e_1} \oplus \spn{e_2} \oplus \spn{e_3} \oplus \spn{e_4} \subseteq \cdots$$
and
    $$D_1 = \spn{e_1 + e_2 + \cdots + e_k}$$
By simple computation, the only preserved projective $\R$ form is 
    $$\set{[z_1, \conj{z_1}, z_2, \conj{z_2}, \cdots, z_{k/2}, \conj{z_{k/2}} \mid z_j \in \C \text{ not all } 0} / \sim$$
Conjugation of an elliptic eigendirection $[z_1, z_2, \cdots, z_{k-1}, z_k]$ with the conjugation coming from the preserved projective $\R$ form yields $[\conj{z_2}, \conj{z_1}, \cdots, \conj{z_k}, \conj{z_{k-1}}]$. This implies the elliptic eigendirection pair $z, w$ is compatible with the preserved projective $\R$ form if and only if there exists representatives of each eigendirection $[z_1, \cdots, z_k]$ and $[w_1, \cdots, w_k]$ such that
    $$z_1 = \conj{w_2}, z_2 = \conj{w_1}, z_3 = \conj{w_4}, z_4 = \conj{w_3}, \cdots, z_{k-1} = \conj{w_k}, z_k = \conj{w_{k-1}}$$
When quotienting by $A_j \oplus C_{k - 2 - j}$ in this normalization we are left with the $j+1$ and $j+2$ components of the space, so that the cross ratios give the ratios of successive terms. For each $A_j \oplus C_{k - 2 - j}$ with $j$ even, the cross ratio from $[A, \spn{\beta}, C, D]$ times the conjugate of that from $[A, \spn{\beta'}, C, D]$ will be
    $$[A, \spn{\beta}, C, D] \cdot \conj{[A, \spn{\beta'}, C, D]} = \left( \frac{z_{j+1}}{z_{j+2}} \right) \cdot \conj{\left( \frac{w_{j+1}}{w_{j+2}} \right)}$$
which simplifies to
    $$\left( \frac{z_{j+1}}{z_{j+2}} \right) \cdot \conj{\left(\frac{\conj{z_{j+2}}}{\conj{z_{j+1}}} \right)} = \left( \frac{z_{j+1}}{z_{j+2}} \right) \cdot \left(\frac{z_{j+2}}{z_{j+1}} \right) = 1$$
when the eigendirections are compatible with the preserved projective $\R$ form. Thus, $[A, \spn{\beta}, C, D] \cdot \conj{[A, \spn{\beta'}, C, D]}$ must be $1$ for the conjugation to be consistent with $e_1 \leftrightarrow e_2, e_3 \leftrightarrow e_4, \cdots, e_{k-1} \leftrightarrow e_k$. This is analogous to the $\pgl{2, \C}$ case where circle inversion in $\hat{\C}$ about a circle centered at the origin will preserve rays from the origin, so an elliptic eigendirection pair must live on a single ray from the origin. In particular, this is analagous to the case of circle inversion about the circle of radius 1 centered at the origin, since there is a hyperbolic eigendirection at $1 \in \hat{\C}$. On the other hand, if $[A, \spn{\beta}, C, D] \cdot \conj{[A, \spn{\beta'}, C, D]} = 1$, then we have $\left( \frac{z_{j+1}}{z_{j+2}} \right) \cdot \conj{\left( \frac{w_{j+1}}{w_{j+2}} \right)} = 1$ and thus $z_{2j+1} = \conj{w_{2j+2}}, z_{2j+2} = \conj{w_{2j+1}}$ with respect to some representatives of $z$ and $w$. Note that this is for an individual pair, but does not on its own guarantee that a representative exists where every pair of components $2j+1, 2j+2$ is simultaneously of this form. 

The quotient of $z$ by $A_j \oplus C_{k - 2 - j}$ gives $[z_{j+1}, z_{j+2}]$ and so the cross ratio $[A, \spn{z}, C, D]$ coming from $A_j \oplus C_{k - 2 - j}$ is $\frac{z_{j+1}}{z_{j+2}}$. The quotients by $A_{j-1} \oplus \C_{k - 1 - j}, A_j \oplus C_{k - 2 - j}$, and $A_{j+1} \oplus C_{k - 3 - j}$ give cross ratios $[A, \spn{w}, C, D]$ of $\frac{w_{j}}{w_{j+1}}, \frac{w_{j+1}}{w_{j+2}}$, and $\frac{w_{j+2}}{w_{j+3}}$ respectively, so the product of these cross ratios is $\frac{w_{j}}{w_{j+3}}$. It then follows that the elliptic eigendirection pair admits representatives such that
    $$z_1 = \conj{w_2}, z_2 = \conj{w_1}, z_3 = \conj{w_3}, z_4 = \conj{w_4}, \cdots, z_{k-1} = \conj{w_k}, z_k = \conj{w_{k-1}}$$
if and only if the two given conditions hold and therefore that the elliptic eigendirection pair is compatible with the preserved projective $\R$ form if and only if the two given conditions hold. 

\end{proof}

\begin{example}
\textit{Note:} A potential obstruction arises if two terms of $z$ are rotated from the corresponding conjugate terms in $w$, that is, if the second condition is not included. For example, consider the eigendirection pairs in $\C P^3$
\begin{align*}
    [1+i, 1-i, 3+i, 3+3i] &, [1+i, 1-i, 3-3i, 3-i] \\
    [2, 2i, 3i, -3] &, [\frac{\sqrt{2}}{2} (1+i), \frac{\sqrt{2}}{2} (1-i), -3, -3i]
\end{align*}
The first pair does respect the conjugation from our chosen normalization, but the second does not. This is despite the fact that the first condition holds, because the first two components of the last eigendirection were rotated by $\pi/4$ while the last two components were not. The second condition prevents this. 
\end{example}

\begin{restatable}{lemma}{pglKehh}
Let $A, C$ an elliptic eigendirection flag pair of eigendirections from some transformations in $\pgl{k, \C}$ for even $k$, constructed atypically so that if $v, \conj{v}$ denotes an elliptic eigendirection pair then 
    $$A = \spn{v_1} \subseteq \spn{v_1} \oplus \spn{\conj{v_1}} \subseteq \spn{v_1} \oplus \spn{\conj{v_1}} \oplus \spn{v_2} \subseteq \spn{v_1} \oplus \spn{\conj{v_1}} \oplus \spn{v_2} \oplus \spn{\conj{v_2}} \subseteq \cdots$$
Let $v$ be any hyperbolic eigendirection from some transformation in $\pgl{k, \C}$ with the property that if $D_1 = \spn{v}$ then $A, C, D_1$ are in generic position. A hyperbolic eigendirection $\beta$ where $A, \spn{\beta}, C, D_1$ are in generic position is compatible with the unique projective $\R$ form preserved by the eigendirections used to construct $A, C,$ and $D_1$ if and only if
\begin{enumerate}
    \item For each even $j$, $[A^{j, k-2-j}, \spn{\beta}^{j, k-2-j}, C^{j, k-2-j}, D^{j, k-2-j}] \in S^1$.
    \item For every even $j$
\begin{align*}
    & \arg [A^{j, k-2-j}, \spn{\beta}^{j, k-2-j}, C^{j, k-2-j}, D^{j, k-2-j}] \; + \\
    & 2 \arg[A^{j+1, k-3-j}, \spn{\beta}^{j+1, k-3-j}, C^{j+1, k-3-j}, D^{j+1, k-3-j}] \; + \\
    & \arg [A^{j+2, k-4-j}, \spn{\beta}^{j+2, k-4-j}, C^{j+2, k-4-j}, D^{j+2, k-4-j}]
\end{align*}
is an integral multiple of $2 \pi$. 
\end{enumerate}
\end{restatable}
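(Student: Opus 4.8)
The plan is to follow the pattern of the preceding lemma: normalize, identify ``$\beta$ is compatible with the preserved projective $\R$ form'' with ``$\beta$ is fixed by the conjugation attached to that form'', and then unwind the latter into a modulus statement (which becomes condition 1) and an argument statement (which becomes condition 2), each of which is read off from the cross ratios via the computation of Section 6.1.

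First I would normalize exactly as in the previous lemma; this is permissible because a homography carrying the triple $(A, C, D_1)$ to any other such triple preserves flags, eigendirections, membership in a projective $\R$ form, and all cross ratios. So take $A = \spn{e_1} \subseteq \spn{e_1} \oplus \spn{e_2} \subseteq \cdots$ the standard flag, $C$ the reverse flag, and $D_1 = \spn{e_1 + e_2 + \cdots + e_k}$; then the unique candidate preserved projective $\R$ form is $\set{[z_1, \conj{z_1}, z_2, \conj{z_2}, \cdots, z_{k/2}, \conj{z_{k/2}}] \mid z_j \in \C \text{ not all } 0}/\sim$, and the associated conjugation sends $[z_1, z_2, \cdots, z_{k-1}, z_k]$ to $[\conj{z_2}, \conj{z_1}, \cdots, \conj{z_k}, \conj{z_{k-1}}]$. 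Writing $\beta = [z_1, \cdots, z_k]$, the hyperbolic eigendirection $\beta$ is compatible with this projective $\R$ form, equivalently lies on it, exactly when it is fixed by the conjugation, i.e. when there is $\lambda \in \C \bs \set{0}$ with $z_{2m-1} = \lambda \conj{z_{2m}}$ and $z_{2m} = \lambda \conj{z_{2m-1}}$ for all $1 \leq m \leq k/2$. Genericity of the flags forces every $z_i \neq 0$; composing the two relations gives $|\lambda|^2 = 1$, so $\lambda = e^{i\theta}$, and a short computation shows $\beta$ lies on the form if and only if (i) $|z_{2m-1}| = |z_{2m}|$ for every $m$ and (ii) $\arg z_{2m-1} + \arg z_{2m}$ is independent of $m$ modulo $2\pi$.

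Next I would translate (i) and (ii) into the stated cross ratios. By the Section 6.1 computation, quotienting by $A_j \oplus C_{k-2-j}$ retains the $(j{+}1)$-st and $(j{+}2)$-nd components of $\beta$, so $[A^{j,k-2-j}, \spn{\beta}^{j,k-2-j}, C^{j,k-2-j}, D^{j,k-2-j}] = z_{j+1}/z_{j+2}$. Taking $j = 2m-2$ even, this equals $z_{2m-1}/z_{2m}$, which has modulus $1$ iff $|z_{2m-1}| = |z_{2m}|$; ranging over even $j$ gives condition 1 of the lemma. For condition 2, set $\delta_i = \arg(z_i/z_{i+1})$, the argument of the cross ratio from quotienting by $A_{i-1} \oplus C_{k-1-i}$. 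A direct telescoping gives $(\arg z_{2m-1} + \arg z_{2m}) - (\arg z_{2m+1} + \arg z_{2m+2}) = \delta_{2m-1} + 2\delta_{2m} + \delta_{2m+1}$, and for $j = 2m-2$ the three terms $\delta_{2m-1}, \delta_{2m}, \delta_{2m+1}$ are the arguments of the cross ratios from $A_j \oplus C_{k-2-j}$, $A_{j+1} \oplus C_{k-3-j}$, $A_{j+2} \oplus C_{k-4-j}$ respectively. Hence $\arg z_{2m-1} + \arg z_{2m}$ is constant in $m$ modulo $2\pi$ exactly when each sum $\arg[A^{j,k-2-j}] + 2\arg[A^{j+1,k-3-j}] + \arg[A^{j+2,k-4-j}]$ (with the $\beta$-flag inserted) is an integral multiple of $2\pi$, which is condition 2; the index bookkeeping matches, since even $j \in \set{0, 2, \cdots, k-4}$ gives exactly the $k/2 - 1$ consecutive pair comparisons and the $k/2$ even $j$'s from $0$ to $k-2$ cover all $k/2$ pairs for condition 1.

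The main obstacle is the global consistency of the rotation $\lambda = e^{i\theta}$. Condition 1 alone only says each pair $(z_{2m-1}, z_{2m})$ has equal modulus, hence is conjugate-related by \emph{some} unit scalar, but those scalars could a priori differ between pairs --- precisely the phenomenon illustrated in the example following the previous lemma. The purpose of condition 2, and the reason the odd-indexed quotients must enter, is that the odd-index cross ratios supply $\delta_{2m}$ and so bridge pair $m$ to pair $m+1$; the telescoping identity then turns ``all $\delta_{2m-1} + 2\delta_{2m} + \delta_{2m+1}$ are multiples of $2\pi$'' into ``one $\theta$ works for every $m$''. Verifying this telescoping and the index ranges carefully is the delicate part; the remainder is the routine cross-ratio computation already used several times, and projective invariance of cross ratios then removes the normalization.
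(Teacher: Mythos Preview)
Your proposal is correct and follows essentially the same approach as the paper: normalize so the candidate $\R$-form is $\{[z_1,\conj{z_1},\ldots,z_{k/2},\conj{z_{k/2}}]\}/\sim$, translate ``$\beta$ lies on the form'' into the pairwise conditions $|z_{2m-1}|=|z_{2m}|$ and $\arg z_{2m-1}+\arg z_{2m}$ constant in $m$, and read these off from the cross ratios $z_{j+1}/z_{j+2}$. Your presentation is somewhat more explicit than the paper's --- you write out the telescoping identity $\delta_{2m-1}+2\delta_{2m}+\delta_{2m+1}=(\arg z_{2m-1}+\arg z_{2m})-(\arg z_{2m+1}+\arg z_{2m+2})$ algebraically, whereas the paper argues the same point geometrically via a figure showing the angles between consecutive pairs --- but the logic, the normalization, and the role of the odd-index cross ratio as the bridge between adjacent pairs are identical.
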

\begin{proof}
Normalize $A, C, D_1$ as in the previous lemma so that again the only potential preserved projective $\R$ form is
    $$\set{[z_1, \conj{z_1}, z_2, \conj{z_2}, \cdots, z_{k/2}, \conj{z_{k/2}} \mid z_j \in \C \text{ not all } 0} / \sim$$
As before, the preserved projective $\R$ forms of a hyperbolic transformation are precisely those through the eigendirections, so it is clear that the hyperbolic eigendirections are compatible with the preserved projective $\R$ form if and only if they each admit representatives of the form
    $$\set{[z_1, \conj{z_1}, z_2, \conj{z_2}, \cdots, z_{k/2}, \conj{z_{k/2}} \mid z_j \in \C \text{ not all } 0}$$
If such a representative exists, then since $z /\conj{z} \in S^1$, we have that the first condition is necessary. As in the previous lemma, the second condition prevents rotation in an individual pair of components separate from the remaining components and is also necessary for the same reason. 

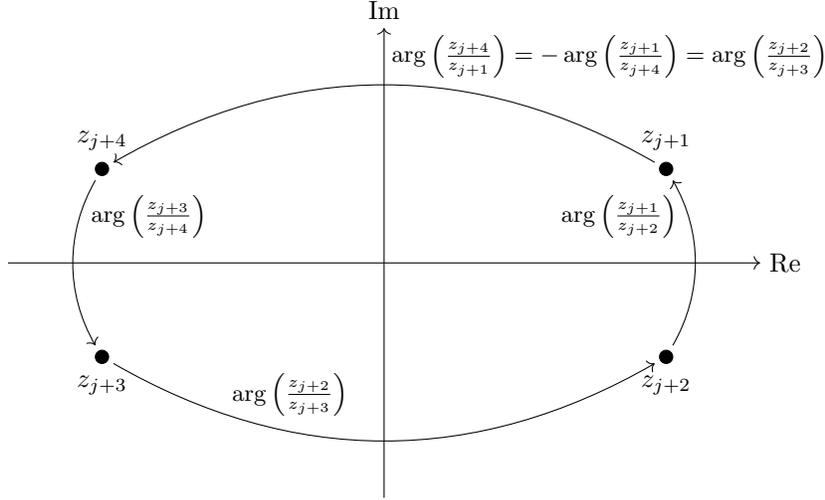
\begin{figure}[H]
    \begin{tikzpicture}[scale=1.25]
        \draw[->] (-4, 0) -- (4, 0);
        \node[right] at (4, 0) {Re};
        \draw[->] (0, -2.5) -- (0, 2.5);
        \node[above] at (0, 2.5) {Im};
        \draw[fill=black] (3, 1) circle (2pt);
        \node[above] at (3, 1.1) {$z_{j+1}$};
        \draw[fill=black] (3, -1) circle (2pt);
        \node[below] at (3, -1.1) {$z_{j+2}$};
        \draw[bend right=30, ->, shorten <=5pt, shorten >= 5pt] (3, -1) to (3, 1);
        \draw[fill=black] (-3, 1) circle (2pt);
        \node[above] at (-3, 1.1) {$z_{j+4}$};
        \draw[fill=black] (-3, -1) circle (2pt);
        \node[below] at (-3, -1.1) {$z_{j+3}$};
        \draw[bend right=30, ->, shorten <=5pt, shorten >= 5pt] (-3, 1) to (-3, -1);

        \draw[bend right=30, ->, shorten <=5pt, shorten >= 5pt] (3, 1) to (-3, 1);
        \draw[bend right=30, ->, shorten <=5pt, shorten >= 5pt] (-3, -1) to (3, -1);

        \node at (-1, -1.4) {\small$\arg{\left(\frac{z_{j+2}}{z_{j+3}} \right)}$};
        \node at (2.4, 2.2) {\small$\arg{\left(\frac{z_{j+4}}{z_{j+1}} \right)} = -\arg{\left(\frac{z_{j+1}}{z_{j+4}} \right)} = \arg{\left(\frac{z_{j+2}}{z_{j+3}} \right)}$};

        \node at (-2.5, 0.5) {\small$\arg{\left(\frac{z_{j+3}}{z_{j+4}} \right)}$};
        \node at (2.5, 0.5) {\small$\arg{\left(\frac{z_{j+1}}{z_{j+2}} \right)}$};
    \end{tikzpicture}
    \caption{Consecutive pairs of terms simultaneously rotating to be complex conjugate pairs}
\end{figure}

When the first condition is satisfied, the argument of the cross ratio coming from the quotient by $A_{j} \oplus \C_{k - 2 - j}$ gives the angle from $z_{j+1}$ to $z_{j+2}$. Similarly, the argument of the cross ratio coming from the quotient by $A_{j+2} \oplus \C_{k - 4 - j}$ gives the angle from $z_{j+3}$ to $z_{j+4}$. As their ratio is in $S^1$, individually the pairs $z_{j+1}, z_{j+2}$ and $z_{j+3}, z_{j+4}$ can be rotated so that they represent a complex conjugate pair. The pair can be simultaneously rotated to be a complex conjugate pair, meaning there exists a representative where both pairs are a complex conjugate pair, if and only if the angle from $z_{j+2}$ to $z_{j+3}$ is equal to the negative of the angle from $z_{j+1}$ to $z_{j+4}$, or equivalently, equal to the angle from $z_{j+4}$ to $z_{j+1}$. Since the argument of a complex fraction is unchanged when scaling either term by a positive real number, this occurs precisely when the sum of arguments as described results in an integer number of complete rotations. This continues throughout the components of the eigendirection. 

Therefore, the two conditions are satisfied if and only if the hyperbolic eigendirection is compatible with the preserved projective $\R$ form. 

\end{proof}

\begin{restatable}{theorem}{pglKmh}
Let $A, C$ be any eigendirection flag pair in $\C P^{k-1}$ constructed so that if $v_j, \conj{v_j}$ denotes elliptic eigendirection pairs and $h_j$ denotes hyperbolic eigendirections then 
\begin{align*}
    A &= \spn{v_1} \subseteq \spn{v_1} \oplus \spn{\conj{v_1}} \\
    & \subseteq \spn{v_1} \oplus \spn{\conj{v_1}} \oplus \spn{v_2} \subseteq \spn{v_1} \oplus \spn{\conj{v_1}} \oplus \spn{v_2} \oplus \spn{\conj{v_2}} \subseteq \cdots \\
    & \subseteq \spn{v_1} \oplus \spn{\conj{v_1}} \oplus \spn{v_2} \oplus \spn{\conj{v_2}} \oplus \cdots \oplus \spn{v_m} \oplus \spn{\conj{v_m}} \oplus \spn{h_1} \subseteq \cdots \\
    & \subseteq \spn{v_1} \oplus \spn{\conj{v_1}} \oplus \spn{v_2} \oplus \spn{\conj{v_2}} \oplus \cdots \oplus \spn{v_m} \oplus \spn{\conj{v_m}} \oplus \spn{h_1} \oplus \cdots \spn{h_n}
\end{align*}
and let $h$ be any hyperbolic eigendirection in $\C P^{k-1}$ with the property that if $D_1 = \spn{h}$ then $A, C, D_1$ are in generic position. A hyperbolic eigendirection $\beta$ where $A, \spn{\beta}, C, D_1$ are in generic position is compatible with the unique projective $\R$ form preserved by the eigendirections used to construct $A, C, D_1$ if and only if
\begin{enumerate}
    \item For each even $j$ less than the number of elliptic eigendirections in flag $A$, $[A^{j, k-2-j}, \spn{\beta}^{j, k-2-j}, C^{j, k-2-j}, D^{j, k-2-j}] \in S^1$.
    \item For every even $j$ less than the number of elliptic eigendirections in $A$
\begin{align*}
    & \arg [A^{j, k-2-j}, \spn{\beta}^{j, k-2-j}, C^{j, k-2-j}, D^{j, k-2-j}] \; + \\
    & 2 \arg[A^{j+1, k-3-j}, \spn{\beta}^{j+1, k-3-j}, C^{j+1, k-3-j}, D^{j+1, k-3-j}] \; + \\
    & \arg [A^{j+2, k-4-j}, \spn{\beta}^{j+2, k-4-j}, C^{j+2, k-4-j}, D^{j+2, k-4-j}]
\end{align*}
is an integral multiple of $2 \pi$. 
    \item For each $j$ greater than or equal to the number elliptic eigendirections in $A$, $[A^{j, k-2-j}, \spn{\beta}^{j, k-2-j}, C^{j, k-2-j}, D^{j, k-2-j}] \in \R$.
    \item When $j$ is two less than the number of elliptic eigendirections in $A$ and $k-2-j$ is equal to the number of hyperbolic eigendirections in $C$, the argument of the cross ratio 
\begin{align*}
    & \arg [A^{j, k - 2 - j}, \spn{\beta}^{j, k - 2 - j}, C^{j, k - 2 - j}, D^{j, k - 2 - j}] \; + \\
    & 2 \arg [A^{j + 1, k - 3 - j}, \spn{\beta}^{j + 1, k - 3 - j}, C^{j + 1, k - 3 - j}, D^{j + 1, k - 3 - j}]
\end{align*}
    is an integer multiple of $2 \pi$.  
\end{enumerate}
An elliptic eigendirection pair $\beta, \beta'$ where $A, \spn{\beta}, C, D_1$ are in generic position and $A, \spn{\beta'}, C, D_1$ are in generic position is compatible with the unique projective $\R$ form preserved by the eigendirections used to construct $A, C, D_1$ if and only if
\begin{enumerate}
    \item For each even $j$ less than the number of elliptic eigendirections in the flag $A$, 
\begin{align*}
    & [A^{j, k - 2 - j}, \spn{\beta}^{j, k - 2 - j}, C^{j, k - 2 - j}, D^{j, k - 2 - j}] \; \\
    & \cdot \conj{[A^{j, k - 2 - j}, \spn{\beta'}^{j, k - 2 - j}, C^{j, k - 2 - j}, D^{j, k - 2 - j}]} = 1
\end{align*} 
    \item For every odd $j$ less than the number of elliptic eigendirections in the flag $A$, the cross ratio 
        $$[A^{j, k - 2 - j}, \spn{\beta}^{j, k - 2 - j}, C^{j, k - 2 - j}, D^{j, k - 2 - j}]$$ 
    is equal to the complex conjugate of the product of the three cross ratios 
    \begin{align*} 
        & [A^{j-1, k - 1 - j}, \spn{\beta'}^{j-1, k - 1 - j}, C^{j-1, k - 1 - j}, D^{j-1, k - 1 - j}] \\
        & \cdot [A^{j, k - 2 - j}, \spn{\beta'}^{j, k - 2 - j}, C^{j, k - 2 - j}, D^{j, k - 2 - j}] \\
        & \cdot [A^{j+1, k - 3 - j}, \spn{\beta'}^{j+1, k - 3 - j}, C^{j+1, k - 3 - j}, D^{j+1, k - 3 - j}]
    \end{align*}
    \item For each $j$ greater than or equal to the number of elliptic eigendirections in the flag $A$,
\begin{align*}
    & [A^{j, k - 2 - j}, \spn{\beta}^{j, k - 2 - j}, C^{j, k - 2 - j}, \spn{d}^{j, k - 2 - j}] \\
    &= \conj{[A^{j, k - 2 - j}, \spn{\beta'}^{j, k - 2 - j}, C^{j, k - 2 - j}, \spn{d}^{j, k - 2 - j}]}
\end{align*}
    \item The cross ratio 
        $$[A^{j, k - 2 - j}, \spn{\beta'}^{j, k - 2 - j}, C^{j, k - 2 - j}, D^{j, k - 2 - j}]$$
    equals the complex conjugate of the product of cross ratios
\begin{align*}
    & [A^{j-1, k - 1 - j}, \spn{\beta}^{j-1, k - 1 - j}, C^{j-1, k - 1 - j}, D^{j-1, k - 1 - j}] \\
    &\cdot [A^{j, k - 2 - j}, \spn{\beta}^{j, k - 2 - j}, C^{j, k - 2 - j}, D^{j, k - 2 - j}]
\end{align*}
    when $j$ is one less than the number of elliptic eigendirections in $A$ and $k - 2 - j$ is one less than the number of hyperbolic eigendirections in $C$.
\end{enumerate}
\end{restatable}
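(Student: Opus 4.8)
\textit{Proof proposal.} The plan is to reduce the statement to the two preceding lemmas, which handle the case where the flag pair $A, C$ is built entirely from elliptic eigendirections, and then to glue the elliptic and hyperbolic blocks of $A$ together. First I would normalize. Since $A, C, D_1$ (with $D_1 = \spn{h}$) are in generic position, by Corollary \ref{uniqueCorollary} the eigendirections composing $A, C$ together with $h$ determine at most one candidate projective $\R$ form, and the atypical construction of $A$ — elliptic pairs $v_1, \conj{v_1}, \ldots, v_m, \conj{v_m}$ followed by hyperbolic eigendirections $h_1, \ldots, h_n$ with $2m + n = k$ — lets me change basis so that
$$A = \spn{e_1} \subseteq \spn{e_1} \oplus \spn{e_2} \subseteq \cdots, \qquad D_1 = \spn{e_1 + e_2 + \cdots + e_k},$$
with the unique preserved projective $\R$ form being
$$\set{[z_1, \conj{z_1}, \ldots, z_m, \conj{z_m}, r_1, \ldots, r_n] \mid z_j \in \C,\ r_j \in \R,\ \text{not all } 0} / \sim .$$
The conjugation attached to this $\R$ form sends $[w_1, \ldots, w_k]$ to the vector obtained by conjugating-and-swapping within each coordinate pair $(2i-1, 2i)$ for $i \le m$ and conjugating in place in coordinates $2m+1, \ldots, k$.

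Next I would translate compatibility with the $\R$ form into statements about components: a hyperbolic eigendirection $\beta$ is compatible iff it admits a representative $[z_1, \conj{z_1}, \ldots, z_m, \conj{z_m}, r_1, \ldots, r_n]$, and an elliptic pair $\beta, \beta'$ is compatible iff it admits representatives interchanged by the conjugation just described. As in the higher-dimensional cross-ratio computation preceding the lemmas on hyperbolic $\pgl{k, \C}$ elements, quotienting by $A_j \oplus C_{k-2-j}$ and forming the cross ratio of the four resulting points in $\C P^1$ returns exactly the ratio $\beta_{j+1}/\beta_{j+2}$ of successive components of the one-dimensional part of $\beta$, and likewise for $\beta'$. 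The listed conditions then sort into three regimes according to where $j$ falls relative to the number of elliptic eigendirections of $A$. When $(j+1, j+2)$ is one matched elliptic coordinate pair (even $j$ strictly inside the elliptic block), compatibility forces that ratio into $S^1$ (hyperbolic $\beta$) or the $\beta$-ratio times the conjugate $\beta'$-ratio to equal $1$ (elliptic $\beta, \beta'$) — this is exactly the $\C P^1$ picture of an elliptic transformation fixing $0, \infty$ with a hyperbolic fixed point at $1$. When $(j+1, j+2)$ lies entirely in the hyperbolic block, compatibility becomes reality of the ratio (hyperbolic $\beta$) or agreement of the $\beta$-ratio with the conjugate $\beta'$-ratio (elliptic $\beta, \beta'$). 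When $(j+1, j+2)$ straddles two distinct elliptic coordinate pairs (odd $j$ inside the elliptic block), one needs the rotation-consistency identity, a length-three product of consecutive cross ratios equalling a conjugate, or equivalently a weighted sum of three arguments being a multiple of $2\pi$. The remaining (fourth) condition in each list handles the single index straddling the last elliptic pair and the first hyperbolic coordinate, where only two cross ratios enter — a length-two product, resp. a sum $\arg[\,\cdot\,] + 2\arg[\,\cdot\,]$ — because on that side the conjugation acts in place rather than by swapping with a partner.

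The verification in each regime is then essentially verbatim the corresponding part of the two preceding lemmas (and of Lemma \ref{pglthreeHHE} and Corollary \ref{pglthreeHHH} in the base case): inside one elliptic pair one reproduces the ``circle centered at the origin preserves rays'' argument, across two elliptic pairs one reproduces the simultaneous-rotation argument, and inside the hyperbolic block one reproduces the reality argument for hyperbolic eigendirections. The final step is to assemble these: the cross ratios constrain only ratios of successive components, so a priori each coordinate pair of $\beta$ (and of $\beta'$) could be rescaled independently, and the rotation-consistency conditions together with the interface conditions are precisely what forces the existence of a \emph{single} representative of $\beta$ (and $\beta'$) for which every required coordinate relation holds simultaneously; a conjugation specified on an eigenbasis extends uniquely to $\C^k$, so the corresponding transformation is simultaneously conjugate into $\pgl{k, \R}$ with the others iff each of its eigendirections is compatible with this $\R$ form (as in Theorem \ref{conjugationTheorem}).

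I expect the main obstacle to be the bookkeeping at the elliptic/hyperbolic interface. Inside each homogeneous block the argument is already contained in the previous lemmas, but the index two less than the number of elliptic eigendirections of $A$, together with its odd companion, sits between a ``swap-and-conjugate'' coordinate pair and a ``conjugate-in-place'' coordinate, so the relation it must satisfy is a hybrid of the two regimes — which is exactly why the interface conditions use two cross ratios (resp. a length-two product) rather than the length-three products appearing strictly inside the elliptic block, and why the ``reality'' reading fails there. Pinning down the exact index ranges and parities so that they match the statement, and checking that the listed conditions are jointly sufficient and not merely necessary for a common representative of $\beta$ and $\beta'$, is the delicate part; the underlying geometry is unchanged from the established lemmas.
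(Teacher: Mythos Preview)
Your proposal is correct and follows essentially the same approach as the paper: normalize so that $A$ is the standard flag and $D_1 = \spn{e_1 + \cdots + e_k}$, identify the unique candidate projective $\R$ form as $\set{[z_1, \conj{z_1}, \ldots, z_m, \conj{z_m}, r_1, \ldots, r_n]}/\sim$, reduce the conditions inside the pure elliptic and pure hyperbolic blocks to the two preceding lemmas, and then handle the single interface index explicitly. Your identification of the elliptic/hyperbolic interface as the only new content, and your explanation of why the overlap conditions involve two rather than three cross ratios there, match the paper's treatment precisely.
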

\begin{proof}
Normalize so that 
    $$A = \spn{e_1} \subseteq \spn{e_1} \oplus \spn{e_2} \subseteq \cdots \subseteq \spn{e_1} \oplus \spn{e_2} \oplus \cdots \oplus \spn{e_k}$$ 
and 
    $$D_1 = \spn{e_1 + e_2 + \cdots + e_k}$$ 
as usual. Then the unique potential preserved projective $\R$ form determined by $A, C, D_1$ is 
    $$\set{[z_1, \conj{z_1}, \cdots, z_m, \conj{z_m}, r_1, r_2, \cdots, r_n] \mid z_j \in \C, r_j \in \R, \text{ with } z_j, r_j \text{ not all 0}} / \sim$$
A hyperbolic eigendirection is compatible with this preserved projective $\R$ form if and only if it admits a representative in the projective $\R$ form. An elliptic eigendirection pair is compatible with this preserved projective $\R$ form if and only if it is swapped by the corresponding conjugation. 

The conditions for the cross ratios $[A, \spn{\beta}, C, D_1]$ coming from quotients by $A_j \oplus C_{k - 2 - j}$ for $j$ greater than or equal to the number of elliptic eigendirections in the flag $A$ are the same as the case where the entire eigendirection flag pair $A, C$ come from hyperbolic eigendirections and so follow from previous lemmas. Similarly, the conditions for the cross ratios $[A, \spn{\beta}, C, D_1]$ coming from quotients by $A_j \oplus C_{k - 2 - j}$ for $k - 2 - j$ greater than or equal to the number of hyperbolic eigendirections in the flag $C$ are the same as the case where the entire eigendirection flag pair $A, C$ come from elliptic eigendirections and so follow from previous lemmas. 

The overlap conditions, those corresponding to quotients into the codimension two subspace spanned by one hyperbolic and one elliptic eigendirection from the flag $A$, are alignment conditions for both the hyperbolic eigendirections and elliptic eigendirections. In the hyperbolic case, without the overlap condition we have that the components in the span of hyperbolic eigendirections in the flag $A$ can be collectively rotated separately from the components in the span of the elliptic eigendirections in the flag $A$. To be in the preserved projective $\R$ form there must be a representative where the portion in the span of the hyperbolic eigendirections is real and the components in the span of the elliptic eigendirections is of the form $[z_1, \conj{z_1}, z_2, \conj{z_2}, \cdots]$. This occurs precisely when the angle from the first component in the span of a hyperbolic eigendirection from the flag $A$ to the last component in the span of an elliptic eigendirection from the flag $A$ is equal to the angle from the last component in the span of an elliptic eigendirection from the flag $A$ to the first component in the span of a hyperbolic eigendirection from the flag $A$, or equivalently, when the angles as described above sum to an integer multiple of $2 \pi$. 

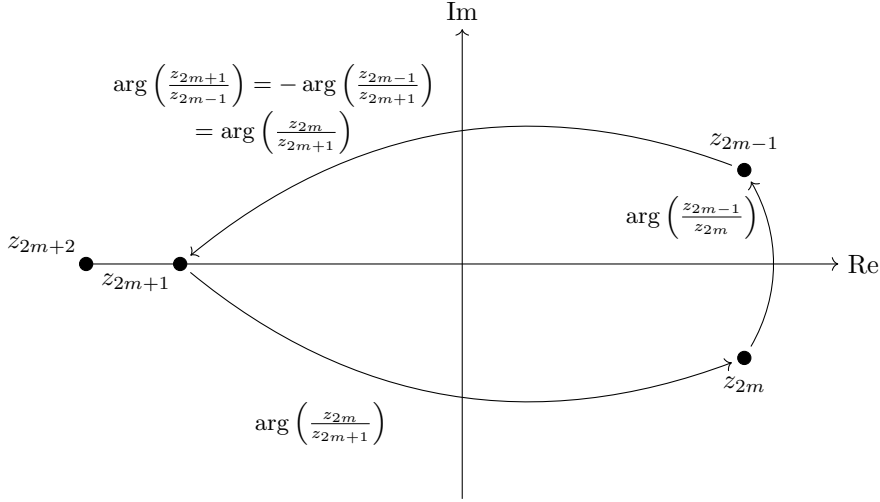
\begin{figure}[H]
    \begin{tikzpicture}[scale=1.25]
        \draw[->] (-4, 0) -- (4, 0);
        \node[right] at (4, 0) {Re};
        \draw[->] (0, -2.5) -- (0, 2.5);
        \node[above] at (0, 2.5) {Im};
        \draw[fill=black] (3, 1) circle (2pt);
        \node[above] at (3, 1.1) {$z_{2m-1}$};
        \draw[fill=black] (3, -1) circle (2pt);
        \node[below] at (3, -1.1) {$z_{2m}$};
        \draw[bend right=30, ->, shorten <=5pt, shorten >= 5pt] (3, -1) to (3, 1);
        \draw[fill=black] (-4, 0) circle (2pt);
        \node[above left] at (-4, 0) {$z_{2m+2}$};
        \draw[fill=black] (-3, 0) circle (2pt);
        \node[below left] at (-3, 0) {$z_{2m+1}$};

        \draw[bend right=30, ->, shorten <=5pt, shorten >= 5pt] (3, 1) to (-3, 0);
        \draw[bend right=30, ->, shorten <=5pt, shorten >= 5pt] (-3, 0) to (3, -1);

        \node at (-1.5, -1.7) {\small$\arg{\left(\frac{z_{2m}}{z_{2m+1}} \right)}$};
        \node at (-2.0, 1.9) {\small$\arg{\left(\frac{z_{2m+1}}{z_{2m-1}} \right)} = -\arg{\left(\frac{z_{2m-1}}{z_{2m+1}} \right)}$};
        \node at (-2.0, 1.4) {\small$= \arg{\left(\frac{z_{2m}}{z_{2m+1}} \right)}$};

        \node at (2.45, 0.5) {\small$\arg{\left(\frac{z_{2m-1}}{z_{2m}} \right)}$};
    \end{tikzpicture}
    \caption{Overlap condition for hyperbolic eigendirection}
\end{figure}

The overlap condition for elliptics is most easily seen in a more algebraic manner. An elliptic eigendirection pair is compatible with the preserved projective $\R$ form if and only if it admits representatives 
    $$[z_1, z_2, \cdots, z_{2m}, z_{2m+1}, \cdots, z_k] \text{ and } [w_1, w_2, \cdots, w_{2m}, w_{2m+1}, \cdots, w_k]$$
such that
    $$z_1 = \conj{w_2}, z_2 = \conj{w_1}, z_3 = \conj{w_4}, z_4 = \conj{w_3}, \cdots, z_{2m-1} = \conj{w_{2m}}, z_{2m} = \conj{w_{2m-1}}$$
and
    $$z_{2m+1} = \conj{w_{2m+1}}, z_{2m+2} = \conj{w_{2m+2}}, \cdots, z_k = \conj{w_k}$$
As before, the conditions for cross ratios involving only the first $2m$ components follow from previous lemmas where the flag pair $A, C$ consists of all elliptic eigendirections and the conditions for cross ratios involving none of the first $2m$ components follow from previous lemmas where the flag pair $A, C$ consists of all hyperbolic eigendirections. 

Then in particular, assuming the conditions on the cross ratios from quotients by $A_j \oplus C_{k - 2 - j}$ for $j \neq 2m-1$, the elliptic eigendirection pair is compatible with the preserved projective $\R$ form if and only if
    $$z_{2m-1} = \conj{w_{2m}}, z_{2m} = \conj{w_{2m-1}}$$
and
    $$z_{2m+1} = \conj{w_{2m+1}}, z_{2m+2} = \conj{w_{2m+2}}$$
Since we can normalize one component of each of $z$ and $w$, this is equivalent to
    $$\frac{z_{2m-1}}{z_{2m}} \cdot \frac{z_{2m}}{z_{2m+1}} = \frac{z_{2m-1}}{z_{2m+1}} = \frac{\conj{w_{2m}}}{\conj{w_{2m+1}}}$$
which is equivalent to the cross ratio condition as stated. 

\end{proof}

If all of the eigendirections of some collection of transformations $\set{M_j}$ in $\pgl{k, \C}$, each with $\pgl{k, \R}$ compatible non-repeated eigenvalues, are compatible with the unique projective $\R$ form preserved by some subset of eigendirections from the transformations, then the collection is simultaneously conjugate into $\pgl{k, \R}$. If any of the eigendirections fail to be compatible with such a unique preserved projective $\R$ form, then the collection is not simultaneously conjugate into $\pgl{k, \R}$. In this way, we can use cross ratios and triple ratios to determine when a representation into $\pgl{k, \C}$ consisting of elements without repeat eigenvalues and with eigendirection flags satisfying certain genericity conditions is simultaneously conjugate into a representation into $\pgl{k, \R}$. 

We conclude the paper with short notes on the genericity conditions and going between Fock-Goncharov coordiantes and cross ratio only coordinates. 

\subsection{Note on Genericity Conditions}
Given a flag pair $A, C$ and the one dimensional part of a flag $D_1$ with $A, C, D_1$ in generic position, we again note that the genericity condition of individual eigendirections from a projective transformation $M$ being in generic position with $A, C, D_1$ does not imply there exists an eigendirection flag pair $\beta, \beta'$ for $M$ with $A, \beta, C, \beta'$ in generic position, nor vice versa. This genericity condition is distinct from the existence of flags in generic position. 

\begin{example}
Let $A, C$ be an eigendirection flag pair coming from $e_1, e_2, e_3$ as usual and let $D_1$ be the span of the eigendirection $e_1 + e_2 + e_3$. Let $M$ be another element in $\pgl{3, \C}$ with eigendirections $v_1 = [1, 2, 1], v_2 = [1, 2, -1], v_3 = [-1, 2, 1]$.

The eigendirections satisfy $A, \spn{v_j}, C, D_1$ in generic position for each $j$. This is easy to verify since 
\begin{align*}
    A_1 \oplus C_1 &= \spn{e_1} \oplus \spn{e_3} \\
    A_1 \oplus D_1 &= \spn{e_1} \oplus \spn{e_2 + e_3} \\
    C_1 \oplus D_1 &= \spn{e_3} \oplus \spn{e_1 + e_2} \\
    A_2 &= \spn{e_1} \oplus \spn{e_2} \\
    C_2 &= \spn{e_3} \oplus \spn{e_2}
\end{align*}
and since each eigendirection $v_j$ is nonzero in every component and has second component not equal to either the first or third components. 

On the other hand, since $\spn{v_1} \oplus \spn{v_2} \cap C_1 = C_1$ and $\spn{v_1} \oplus \spn{v_3} \cap A_1 = A_1$ it is not possible to construct an eigendirection flag pair $\beta, \beta'$ from $v_1, v_2, v_3$ such that $A, \beta, C, \beta'$ is in generic position. 

\end{example}

\begin{example}
Let $A, C$ be an eigendirection flag pair coming from $e_1, e_2, e_3$ as usual and let $D_1$ be the span of the eigendirection $e_1 + e_2 + e_3$. Let $M$ be another element in $\pgl{3, \C}$ with eigendirections $v_1 = [1, 1, -1], v_2 = [1, -1, 1], v_3 = [2, -1, 2]$.

The eigendirections do not satisfy $A, \spn{v_j}, C, D_1$ being in generic position because $\left( D_1 \oplus C_1 \right) \cap \spn{v_1} = \spn{v_1} \neq \set{\vec{0}}$. 

On the other hand, if $\beta = \spn{v_1} \subseteq \spn{v_1} \oplus \spn{v_2} \subseteq \spn{v_1} \oplus \spn{v_2} \oplus \spn{v_3}$ and $\beta'$ is its flag pair then the flags $A, \beta, C, \beta'$ are in generic position. 
\end{example}

\subsection{Dictionary between Fock-Goncharov Coordinates and New Coordinates}
Going between Fock-Goncharov coordinates and our new coordinates is straightforward, because we have a dictionary between Fock-Goncharov coordiantes and the eigendirections of the projective transformations (with respect to some normalization) and a dictionary between those eigendirections and our new coordinates. Given a collection of either coordinates, we construct the eigendirections in some chosen basis and and then compute the coordinates of the other type. This works in both directions and does not depend on our choice of basis since every coordinate used is invariant under action by $\pgl{,\C}$.

\nocite{*}
\bibliography{main}

\begin{thebibliography}{1}

\bibitem{Conrad:Complexification}
Keith Conrad.
\newblock Complexification.
\newblock online notes at: \url{https://kconrad.math.uconn.edu/blurbs/linmultialg/complexification.pdf}.

\bibitem{Fock:2006}
Vladimir Fock and Alexander Goncharov.
\newblock Moduli spaces of local systems and higher {T}eichm\"{u}ller theory.
\newblock {\em Publ. Math. Inst. Hautes \'{E}tudes Sci.}, (103):1--211, 2006.

\bibitem{Hitchin:1992}
N.~J. Hitchin.
\newblock Lie groups and {T}eichm\"{u}ller space.
\newblock {\em Topology}, 31(3):449--473, 1992.

\bibitem{Labourie:2006}
Fran\c{c}ois Labourie.
\newblock Anosov flows, surface groups and curves in projective space.
\newblock {\em Invent. Math.}, 165(1):51--114, 2006.

\bibitem{Palesi:2013}
Frederic Palesi.
\newblock Introduction to positive representations and fock-goncharov coordinates.
\newblock {\em HAL}, 2013.
\newblock hal-01218570.

\bibitem{Wienhard:2018}
Anna Wienhard.
\newblock An invitation to higher {T}eichm\"{u}ller theory.
\newblock In {\em Proceedings of the {I}nternational {C}ongress of {M}athematicians---{R}io de {J}aneiro 2018. {V}ol. {II}. {I}nvited lectures}, pages 1013--1039. World Sci. Publ., Hackensack, NJ, 2018.

\end{thebibliography}

\end{document}